\renewcommand{\descriptionlabel}[1]
{\hspace*{0.5cm}\textit{#1}}
\newcommand{\R}{\mathbb{R}}
\newcommand{\N}{\mathbb{N}}
\newcommand{\K}{\mathcal{K}}
\newcommand{\LL}{\mathcal{L}}
\newcommand{\RR}{\mathcal{R}}
\newcommand{\T}{\mathcal{T}}
\newcommand{\CC}{\mathscr{C}}
\newcommand{\PP}{\mathscr{P}}
\newcommand{\HH}{\mathscr{H}}
\newcommand{\degg}{\textup{deg}}
\DeclareMathOperator*{\dist}{dist}
\DeclareMathOperator*{\divv}{div}
\newcommand{\lp}{L^p(\Omega)}
\newcommand{\wspr}{W^{s,p}(\R^n)}
\newcommand{\wsp}{W^{s,p}(\Omega)}
\newcommand{\wup}{W^{1,p}(\Omega)}
\newcommand{\wspc}{W^{s,p}_0(\Omega)}
\newcommand{\twsp}{\widetilde{W}^{s,p}(\Omega)}
\newcommand{\wupc}{W^{1,p}_0(\Omega)}
\newtheorem{teo}{Theorem}[section]
\newtheorem{lem}[teo]{Lemma}
\newtheorem{co}[teo]{Corollary}
\newtheorem{pro}[teo]{Proposition}
\theoremstyle{remark}
\newtheorem{re}[teo]{Remark}
\theoremstyle{definition}
\begin{document}
\title{Global bifurcation for fractional $p$-Laplacian\\
 and an application}
\author{Leandro M. Del Pezzo and Alexander Quaas}
\runtitle{Global bifurcation for fractional $p$-Laplacian} 
\runauthor{L. M. Del Pezzo and Alexander Quaas}.

\address{L. M. Del Pezzo
\hfill\break\indent CONICET and Departamento  de Matem{\'a}tica, FCEyN,
Universidad de Buenos Aires,
\hfill\break\indent Pabellon I, Ciudad Universitaria (1428),
Buenos Aires, Argentina.
\email{ldpezzo@dm.uba.ar}}
\address{A. Quaas
\hfill\break\indent Departamento de Matem\'atica, Universidad T\'ecnica 
Federico Santa Mar\'ia Casilla V-110, Avda. Espa\~na, 1680 -- 
Valpara\'iso, CHILE.
\email{alexander.quaas@usm.cl}}
\classification{35R11,35B32,47G20,45G05}%

\abstract{We prove the existence of an unbounded branch of solutions to the 
	non-linear non-local equation
	$$
		(-\Delta)^s_p u=\lambda |u|^{p-2}u + 
		f(x,u,\lambda) \quad\text{ in }\quad  \Omega,\quad u=0 
		\quad\text{ in }\quad \R^n\setminus\Omega ,
	$$
	bifurcating from the first eigenvalue. 
	Here $(-\Delta)^s_p$ denotes the fractional $p$-Laplacian
 	and  $\Omega\subset\R^n$  is a bounded regular domain. 
	The proof of the bifurcation results relies in computing the 
	Leray--Schauder degree by making an homotopy respect to $s$  (the 
	order of the fractional $p$-Laplacian) 
	and then to use results of local case 
	(that is $s=1$) found in \cite{delPino}. 
	Finally, we give some application to an existence result.}
\keywords{Bifurcation, Fractional $p$-Laplacian, existence results}

\maketitle


\section{Introduction}
	In this paper, we study  Rabinowitz's global bifurcation type 
	result  form the first eigenvalue in a bounded domain of the  
	non-linear non-local operator called the fractional $p$-Laplacian 	
	operator, that is
	\begin{equation}\label{eq:opfint}
		(-\Delta)^s_p u=2\K(1-s)\mbox{ P.V.}\int_{\R^n}
		\frac{|u(x)-u(y)|^{p-2}(u(x)-u(y))}{|x-y|^{n+sp}}\, dy,
	\end{equation}
	where $\K$ is a constant depending on the dimension and $p$. 
	Observe that, this operator extends the fractional Laplacian ($p= 2$).
	
	\medskip
		
	More precisely, we prove the existence of an unbounded 
	branch of solutions to the non-linear non-local equation
	\begin{equation}\label{eq:oppt}
		\begin{cases}
			(-\Delta)^s_p u=
			\lambda |u|^{p-2}u + f(x,u,\lambda) &\text{ in }
			\Omega,\\
			u=0 &\text{ in } \mathbb{R}^n\setminus\Omega	,		
		\end{cases}		
	\end{equation}
	bifurcating from the first eigenvalue of the fractional 
	$p$-Laplacian assuming that $f$ is  
	$o(|u|^{p-2}u)$ near zero and $\Omega\subset\mathbb{R}^n$ 
	is a bounded regular domain. 

	\medskip

	Bifurcation and global bifurcation are basic principles in 
	mathematical analysis that can be established using, for example, 
	implicit function theorem or degree theory and, in some simple
	situation,  sub and super solution method,  i.e Perron's 
	method. In particular, bifurcation is used as a starting point to 
	prove existence of solution to ODE's and PDE's, 
	see for example \cite{Ku, RA1}. Some of the pioneer works related 
	with our method can be found in \cite{CR,RA3,RA2}.  
	Then many others generalization are established in different context 
	of local operator, see for instance 
	\cite{BR,BD,BEQ,delpino1, delPino,dra,dra1,Jorge,Ki}
	and the reference therein.

	\medskip

 	Fractional equations are nowadays classical in analysis, 
 	see for example \cite{S}. Fractional Laplacian 
 	have attracted much interest since they are connected with 
 	different applications and sometimes from the mathematical point 
 	of view the non-local character introduce difficulties that need 
 	some new approaches, see for instance \cite{DNPV,Sil} and 
 	the reference therein.

	\medskip	
	
	In \cite{CLM}, the fractional $p$-Laplacian is studied through 
	energy  and test function methods and it is used to obtain 
	H\"older extensions.
	See also \cite{BCF, BCF1}, where the authors consider a non-local
	``Tug-of-War" game, and \cite{IN}.

	\medskip

	Recently, existence and simplicity of the first eigenvalue in a 
	bounded domain for the fractional $p$-laplacian are obtained and also 
	some regularity result are established in 
	\cite{di2014local, FP,IMS,LL}. Some results of these works extend 
	the results of \cite{Anane} to the non-local case.
	
	\medskip
	
	In the process of writing this article, appearing 
	the following work \cite{IMS} where the authors, using  
	barrier arguments,  prove $C^\alpha$-regularity up to 
	the boundary for the weak solutions of a non-local non-linear 
	problem driven by the fractional $p$-Laplacian operator. This result 
	generalises the main result in \cite{RS}, where the case $p=2$ is studied.
	
	\medskip
	
	Thus, there is natural to ask if bifurcation occurs, this is even not
	known, as far as we know, for the case $p=2$,
	except for some related very recent
	results that can be found in \cite{MR3340666,FBS, PSY}.
	More precisely, in \cite{FBS}, the authors prove a multiplicity and 
	bifurcation result for the following problem
	\begin{equation}\label{eq:citas1}
		\begin{cases}
			-\mathscr{L}_{K}u=\lambda u+|u|^{2^\star-2}u &\text{ in }
			\Omega,\\
			u=0 &\text{ in } \mathbb{R}^n\setminus\Omega,
		\end{cases}
	\end{equation}
	where $n>2s$ and $2^\star=\nicefrac{2n}{n-sp}.$ Here $\mathscr{L}_K$
	is the non-local operator
	\[
		-\mathscr{L}_{K}u(x)=\int_{\mathbb{R}^n}
		(u(x+y)+u(x-y)-2u(x))K(y)\, dy, \quad x\in \mathbb{R}^n
	\] 
	whose model is given by the fractional Laplacian. They show that,
	in a suitable left neighborhood of any Dirichlet eigenvalue
	of $-\mathscr{L}_K,$ the number of trivial solution of \eqref{eq:citas1}
	is at least twice the multiplicity of the eigenvalue. In \cite{PSY},
	the authors extend the above bifurcation and multiplicity result to 
	the fractional $p-$Laplacian operator. Finally in \cite{MR3340666}, using variational method, the authors prove that the next problem admits at 
	least one non-trivial solution 
	\[
		\begin{cases}
	       (-\Delta)^{s}u(x)-\lambda u=\mu f(x,u) &\text{ in }\Omega,\\		
			u=0 &\text{ in }\mathbb{R}^n\setminus\Omega,\\
		\end{cases}
	\]
	where $n>2s,$ $f$ is a function satisfying suitable regularity and 
	growth conditions and 
	the parameters $\lambda$ and $\mu$ lie in a suitable range.

	\medskip

	In our approach, to establish the Rabinowitz's type of global 
	bifurcation result, we use Leray-Schauder degree that can be 
	computed  by making an homotopy respect to $s$ (the order of the 
	fractional $p$-Laplacian operator) and then use the homotopy 
	invariance of the Leray--Schauder degree to deduce that the degree 
	is the same as in the local case ($s=1$), i.e. the 
	$p$-Laplacian, which is already computed in \cite{delPino}. 
	Notice that in  \cite{delPino} similar ideas are used, where the  
	homotopy was done with respect to $p$ and the result were deduced
	from the (by now) classical case of the Laplacian. To do this 
	homotopy with respect to $s,$ we need as a starting point different 
	properties of the first eigenvalue in terms of $s$ up to 
	$s=1$, analogous properties to the ones that were obtained in 
	\cite{delPino}, but now with respect to $s$ not respect to $p$.
	
	Notice that one of our limiting procedures $s$ to $1$ are obtained 
	in the weak formulation with the help of some limiting properties 
	of the fractional Sobolev spaces already studied in \cite{bourgain}. 
	Moreover, in \cite{IN} this limiting procedure is done by viscosity 
	solution techniques for a very close related operator.
	
	\medskip
	
	Before stated our main theorem we will give the precisely assumption of
	the function $f\colon\Omega\times\R\times\R\to\R$:
	\begin{enumerate}
		\item $f$ satisfies a Carath\'eodory condition 
		in the first two variables;
		\item $f(x,t,\lambda)=o(|t|^{p-1})$ near $t=0,$ 
		uniformly a.e. with respect 
		to $x$ and uniformly with respect to $\lambda$ on bounded sets;
		\item There exists $q\in (1,p^\star_s)$ such that
			\[
				\lim_{|t|\to\infty}\dfrac{|f(x,t,\lambda)|}{|t|^{q-1}}=0
			\]
			uniformly a.e. with respect to $x$ and uniformly with 
			respect to $\lambda$ on bonded sets.
	\end{enumerate}
	Here $p_s^\star$ is the fractional critical 
			Sobolev exponent, that is
			\[
			p_s^\star\coloneqq
				\begin{cases}
					 \dfrac{np}{n-sp} &\text{ if } sp<n,\\
					 \infty  &\text{ if } sp\ge n.\\
				\end{cases}
				\]
	We denote by $\lambda_1(s,p)$ the first eigenvalue of following
	eigenvalue problem
	\begin{equation}
		\label{eq:epint}
		 \begin{cases}
			(-\Delta)_p^su=\lambda|u|^{p-2}u 
			&\text{ in } \Omega,\\
			u=0 &\text{ in } \R^n\setminus\Omega.
		\end{cases}	
	\end{equation}
	
	\medskip

	Our main result:

	\begin{teo}\label{teo:teo11}
		Let $\Omega\subset\R^n$ be a bounded domain with Lipschitz 
		boundary,
		$s\in(0,1),$ and $p\in(1,\infty).$
		The pair $(\lambda_1(s,p),0)$ is a bifurcation point of 
		\eqref{eq:oppt}. Moreover, there is a connected component 
		of the
		set of non-trivial weak solutions of \eqref{eq:oppt} in
		$\R\times\twsp$ whose closure contains $(\lambda_1(s,p),0)$
		and it is either unbounded or contains a pair $(\mu,0)$
		for some eigenvalue $\mu$ of \eqref{eq:epint} with
		$\mu>\lambda_1(s,p).$ 
	\end{teo}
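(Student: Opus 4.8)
The plan is to apply Rabinowitz's global bifurcation theorem, which requires recasting the problem \eqref{eq:oppt} as a fixed-point equation in $\twsp$ for a compact operator, and then computing a Leray--Schauder degree around the trivial branch near $\lambda_1(s,p)$. First I would define the solution operator: for $g\in L^{q'}(\Omega)$ (or more generally in the relevant dual space), let $\mathcal{A}_s(g)\in\twsp$ be the unique weak solution of $(-\Delta)^s_p u = |g|^{q-2}g$ with $u=0$ outside $\Omega$; existence and uniqueness follow from the strict convexity of the associated energy functional, and compactness of $\mathcal{A}_s$ follows from the compact embedding $\twsp\hookrightarrow L^q(\Omega)$ for $q<p^\star_s$ (this is where assumption (3) on the growth of $f$ is used). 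Because the operator is $(p-1)$-homogeneous rather than linear, one cannot literally write $u = \lambda \mathcal{A}_s(u) + \dots$; instead I would use the standard device of composing with a homeomorphism, working with the map $u\mapsto \mathcal{A}_s\big(\lambda|u|^{p-2}u + f(x,u,\lambda)\big)$, whose fixed points are exactly the weak solutions, and verifying that hypothesis (2) makes the nonlinear perturbation a higher-order term near $u=0$.

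With the compact reformulation in place, the heart of the argument is the degree computation: I would show that for $\lambda$ in a punctured neighborhood of $\lambda_1(s,p)$ with $\lambda\neq$ any eigenvalue, the Leray--Schauder degree $\deg(I - T_{s,\lambda}, B_r(0), 0)$ jumps as $\lambda$ crosses $\lambda_1(s,p)$. The key idea, as announced in the introduction, is a homotopy in the parameter $s$: define a family of operators $T_{\sigma,\lambda}$ for $\sigma\in[s,1]$ interpolating between the fractional problem and the local $p$-Laplacian problem, verify that along this homotopy $0$ is never a solution on $\partial B_r$ for $\lambda$ fixed away from the spectrum (this requires uniform a priori bounds and, crucially, that $\lambda_1(\sigma,p)$ varies continuously in $\sigma$ and stays bounded away from $\lambda$ — properties of the first eigenvalue analogous to those in \cite{delPino} but now in $s$), and conclude by homotopy invariance that the degree equals the one for $s=1$. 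For the local case $s=1$ the degree has already been computed in \cite{delPino}: it changes by a nonzero amount (related to $\pm 1$ via the odd-map / Krasnoselskii-type index of the first eigenvalue, using its simplicity and the sign of the first eigenfunction) as $\lambda$ passes $\lambda_1(1,p)$. Transferring this value back gives the degree jump at $\lambda_1(s,p)$.

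Once the local index change is established, Rabinowitz's theorem applies verbatim: since the degree on small spheres differs on the two sides of $\lambda_1(s,p)$, this eigenvalue is a bifurcation point, and moreover the connected component $\mathscr{C}$ of the closure of the set of nontrivial solutions through $(\lambda_1(s,p),0)$ satisfies the global alternative — it is either unbounded in $\R\times\twsp$, or it meets the trivial branch again at some other eigenvalue $(\mu,0)$ with $\mu>\lambda_1(s,p)$ (the case $\mu<\lambda_1(s,p)$ being excluded because $\lambda_1$ is the \emph{first} eigenvalue and no eigenvalue lies below it; that $\mu\neq\lambda_1$ uses its simplicity). One should also check $\mathscr C$ cannot return to $(\lambda_1(s,p),0)$ only, which again follows from simplicity of $\lambda_1(s,p)$ together with the Crandall--Rabinowitz local bifurcation picture.

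The main obstacle I anticipate is the homotopy step itself: one must establish enough uniformity as $\sigma\to 1$ to guarantee that no solution escapes to the boundary of the ball and that the relevant functional-analytic setting (the spaces $\widetilde{W}^{\sigma,p}(\Omega)$, their embeddings, and the solution operators) behaves continuously in $\sigma$ up to and including $\sigma=1$. This is precisely where the limiting results on fractional Sobolev spaces from \cite{bourgain} enter — to pass from the nonlocal weak formulation to the local one — and where one needs the continuity and monotonicity/boundedness of $\sigma\mapsto\lambda_1(\sigma,p)$ as $\sigma\uparrow 1$, together with a uniform (in $\sigma$) a priori estimate for solutions on bounded $\lambda$-intervals. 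Getting these uniform-in-$\sigma$ statements, rather than merely fixed-$s$ statements, is the technically delicate part; the degree-theoretic bookkeeping afterwards is standard.
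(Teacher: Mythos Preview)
Your proposal is correct and matches the paper's approach essentially line for line: recast \eqref{eq:oppt} as $u=\RR_{s,p}(\lambda|u|^{p-2}u+F(\lambda,u))$, compute the degree jump at $\lambda_1(s,p)$ via a homotopy in $s$ back to the local case of \cite{delPino}, and invoke Rabinowitz. The one technical device you should make explicit when you write it out is that, since the ambient spaces $\widetilde{W}^{\sigma,p}(\Omega)$ vary with $\sigma$, the paper transfers the degree to $L^p(\Omega)$ (Lemma~\ref{lem:LSDP}) before doing the homotopy, and it replaces the fixed $\lambda$ by a continuous path $\rho(\sigma)\in(\lambda_1(\sigma,p),\lambda_2(\sigma,p))$ using the uniform spectral gap of Lemma~\ref{lem:lgi}; these are precisely the resolutions of the obstacle you anticipated.
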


	Notice that the ideas of the proof can be used for other problems. 
	As for example, a very closely 
	related problem such as bifurcation from infinity by the change of	 
	variable $ v=\nicefrac{u}{\|u\|_{\twsp}^2}$, for details see for example 
	\cite{dra1}. 
	
	\medskip

	Then, we use the above theorem for some application, more precisely, 
	we prove existence of a  non-trivial weak solution of the following 
	non-linear non-local problem
	\begin{equation} \label{eq:DD}
		\begin{cases}
			(-\Delta)^s_p u=g(u) &\text{ in } \Omega,\\
			u=0 &\text{ in } \R^n\setminus \Omega,\\
		\end{cases}
	\end{equation} 
	where $\nicefrac{g(t)}{|t|^{p-2}t}$ is bounded and crosses 
	the first eigenvalue. 
	
	\begin{teo}\label{teo:teo31}
	Let $g\colon\Omega\to\R$ continuous such that $g(0)=0$ and $g$ satisfies 
	\begin{enumerate}[\mbox{A}1.]
		\itemsep.5em 
		\item	$\dfrac{g(t)}{|t|^{p-2}t}$ is bounded;
		\item  $\underline{\lambda}\coloneqq\displaystyle\lim_{t\to0}
			{\dfrac{g(t)}{|t|^{p-2}t}<\lambda_1(s,p)<
			\displaystyle\liminf_{|t|\to\infty}\dfrac{g(t)}{|t|^{p-2}t}}.$ 
	\end{enumerate}
	Then there exists a non-trivial weak solution $u$ 
	of \eqref{eq:DD} such that $u$ has constant-sign in $\Omega$.
	\end{teo}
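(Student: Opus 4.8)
The plan is to deduce Theorem~\ref{teo:teo31} from the global bifurcation result Theorem~\ref{teo:teo11} by setting up an auxiliary problem with a parameter $\lambda$ and following the branch. First I would write $g(t) = \underline{\lambda}|t|^{p-2}t + h(t)$ near the origin, so that $h(t) = o(|t|^{p-1})$ as $t\to 0$ by assumption A2; more precisely, for the bifurcation machinery I want to produce, for each $\lambda$ in a suitable range, a problem of the form $(-\Delta)^s_p u = \lambda|u|^{p-2}u + f(x,u,\lambda)$ whose solutions at $\lambda = \underline\lambda$ (or at some distinguished value) are solutions of \eqref{eq:DD}. A clean way to do this is to consider the homotopy $f(x,t,\lambda) := g(t) - \lambda|t|^{p-2}t$ and check that this $f$ satisfies hypotheses (1)--(3): the Carath\'eodory condition is immediate since $g$ is continuous and independent of $x$; hypothesis (2) follows from $\lim_{t\to 0} g(t)/(|t|^{p-2}t) = \underline\lambda$ so that $g(t) - \lambda|t|^{p-2}t = (\underline\lambda - \lambda)|t|^{p-2}t + o(|t|^{p-1})$ — but this is \emph{not} $o(|t|^{p-1})$ unless $\lambda = \underline\lambda$, so the correct normalization is to bifurcate the equation $(-\Delta)^s_p u = \lambda|u|^{p-2}u + \big(g(u) - \underline\lambda|u|^{p-2}u\big)$, i.e.\ take $f(x,t,\lambda) = g(t) - \underline\lambda|t|^{p-2}t$, independent of $\lambda$, which genuinely is $o(|t|^{p-1})$ near $0$; and hypothesis (3) follows from A1, since $|g(t)| \le C|t|^{p-1}$ forces $|f(x,t,\lambda)| \le C'|t|^{p-1}$ and $p - 1 < p \le p^\star_s$ handles the growth (taking any $q \in (p, p^\star_s)$, or treating the borderline $p = p^\star_s$ case separately when $sp \ge n$ is excluded).

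With this set-up, Theorem~\ref{teo:teo11} applied with bifurcation parameter at $\lambda_1(s,p)$ yields an unbounded connected component $\CC$ of non-trivial solutions of $(-\Delta)^s_p u = \lambda|u|^{p-2}u + g(u) - \underline\lambda|u|^{p-2}u$ emanating from $(\lambda_1(s,p),0)$. A solution $(\lambda,u)$ on this branch solves the original problem \eqref{eq:DD} precisely when $\lambda = \underline\lambda$, i.e.\ I need the branch $\CC$ to intersect the hyperplane $\{\lambda = \underline\lambda\}$ with $u \ne 0$. The next step is therefore to show $\CC$ crosses this value of $\lambda$. Since $\underline\lambda < \lambda_1(s,p)$, the branch starts at $\lambda = \lambda_1(s,p)$, and I would argue that it cannot stay in $\{\lambda > \underline\lambda\}$: if it did, then either it is unbounded in $\twsp$ while $\lambda$ stays bounded, or it returns to another eigenvalue $\mu > \lambda_1(s,p)$ of \eqref{eq:epint}. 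The second alternative is harmless for the existence conclusion only if it still forces a crossing; so the real work is to rule out unbounded branches with $\lambda$ confined to a bounded set bounded away from $\underline\lambda$, and to control the right endpoint. The key a priori estimate is: on any solution with $\lambda$ in a compact set, the sublinear-type bound A1 plus the variational characterization of $\lambda_1(s,p)$ gives $\|u\|_{\twsp}$ bounded in terms of $\lambda$ — indeed testing the equation with $u$ and using $[u]^p_{s,p} \ge \lambda_1(s,p)\|u\|_p^p$ together with $|g(u) - \underline\lambda|u|^{p-2}u| \le C|u|^{p-1}$ yields $(\lambda_1(s,p) - \lambda - C')\|u\|_p^p \le 0$-type inequalities, so unboundedness of $\|u\|$ can only happen near the eigenvalues. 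This pins $\CC$ down: it must reach $\lambda = \underline\lambda$, because moving left from $\lambda_1(s,p)$ the component cannot escape to infinity (no eigenvalues below $\lambda_1(s,p)$, and A1 prevents blow-up) and cannot terminate, so by connectedness it contains a point $(\underline\lambda, u_0)$ with $u_0 \ne 0$.

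Finally, I would establish the constant-sign property. The natural device is to work with a truncated problem: replace $g$ by $g_+(t) := g(t^+)$ (and symmetrically $g_-$), run the whole argument above for $g_+$ to get a nontrivial solution $u_0$ of $(-\Delta)^s_p u_0 = g_+(u_0)$, and then test against $u_0^- := \max(-u_0,0) \in \twsp$. Using the elementary inequality $|a-b|^{p-2}(a-b)(a^- - b^-) \le -|a^- - b^-|^p$ for the fractional $p$-Laplacian form, the left side becomes $\le -[u_0^-]^p_{s,p}$, while the right side $\int g_+(u_0) u_0^- = 0$ since $g_+$ vanishes on negatives; hence $u_0^- = 0$ and $u_0 \ge 0$, and then $u_0$ solves \eqref{eq:DD} with $g$ itself on the set where $u_0 > 0$, which is all of $\Omega$ by the strong maximum principle / positivity for the fractional $p$-Laplacian. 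The main obstacle I anticipate is the crossing argument in the previous paragraph: carefully combining Theorem~\ref{teo:teo11}'s dichotomy (unbounded vs.\ hitting a higher eigenvalue) with the a priori bounds from A1 to guarantee that the continuum genuinely reaches $\lambda = \underline\lambda$, rather than, say, spiraling among higher eigenvalues without ever descending — resolving this cleanly is where the hypothesis $\liminf_{|t|\to\infty} g(t)/(|t|^{p-2}t) > \lambda_1(s,p)$ in A2 must be used, since it is precisely what prevents the branch from being trapped at large $\|u\|$ for $\lambda$ near $\underline\lambda$ and thereby forces the intersection.
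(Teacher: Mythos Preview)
Your set-up is the paper's: $f(t)=g(t)-\underline\lambda|t|^{p-2}t$ and the bifurcation problem \eqref{eq:B}. But the order in which you try to obtain \emph{crossing} and \emph{constant sign} is reversed, and that is where the argument breaks.

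Your a~priori estimate does not give what you claim. Testing a solution of \eqref{eq:B} with $u$ and using $\K(1-s)|u|_{\wspr}^p\ge\lambda_1(s,p)\|u\|_{\lp}^p$ together with $|f(u)|\le C|u|^{p-1}$ yields only $\lambda_1(s,p)\le\lambda+C$, i.e.\ a \emph{lower} bound on $\lambda$; it does not bound $\|u\|_{\twsp}$ for $\lambda$ in a compact set, so ``A1 prevents blow-up'' is false as stated and nothing stops the branch from becoming unbounded inside $\{\lambda>\underline\lambda\}$. The paper obtains the needed bound (Lemma~\ref{lema:cota}) only \emph{after} establishing that every solution on $\CC$ has constant sign (Lemma~\ref{lem:lema31}): along a blowing-up sequence with $u_k>0$ the normalized $\hat u_k$ converge to a \emph{positive} function $u_0$, which is therefore a first eigenfunction of a weighted problem; positivity forces $u_k\to\infty$ uniformly on a large set, so the liminf in A2 pushes the weight strictly above $\lambda_1(s,p)$, a contradiction. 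Without constant sign the limit $u_0$ could change sign, correspond to a higher weighted eigenvalue, and the contradiction disappears. The same constant-sign lemma is also what excludes the second alternative of Theorem~\ref{teo:teo11} (return to $(\mu,0)$ with $\mu>\lambda_1(s,p)$), which you dismiss as ``harmless'' without argument.

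Your plan to recover constant sign by truncation also fails as written: with $g_+(t)=g(t^+)$ one has $g_+(t)/(|t|^{p-2}t)\to 0$ as $t\to0^-$ and $\to\underline\lambda$ as $t\to0^+$, so no choice of the additive constant makes $g_+(t)-c|t|^{p-2}t=o(|t|^{p-1})$ and hypothesis~(2) of Theorem~\ref{teo:teo11} is violated. The paper instead proves constant sign along the whole branch directly, using the nodal-set measure estimate (Lemma~\ref{lem:autoval3}) and the strong minimum principle (Theorem~\ref{teo:stm}); this is the key lemma your proposal is missing.
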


	For the prove of this existence result we need some 
	extra qualitative  properties of the branch of solutions in the above 
	theorem. Some of these properties come in some cases from the 
	study of the first eigenvalue of the fractional $p$-Laplacian with
	weights, see Section \ref{ffl}.
	
	\bigskip

	The paper is organized as follows. In Section \ref{Preli}, we review 
	some results of fractional Sobolev spaces and some properties of 
	the Leray-Schauder degree; in Section \ref{DP} we study the Dirichlet 
	problem with special interest in proving continuity in terms of $s$ 
	(see Lemma \ref{lem:contdeR} below);  in Section \ref{ffl} we study 
	the eigenvalue 
	problem with weights. In 
	addition, we establish the continuity of the eigenvalue respect to 
	$s$ that will help us to make the homotopy and then to compute the degree.  
	In Section \ref{bifurcation} we prove our main theorem.
	Finally, in Section \ref{aplica} we prove our existence results.

\section{Preliminaries}\label{Preli}

\subsection{Fractional Sobolev spaces}
	First, we briefly recall the definitions and some elementary
	properties of the fractional Sobolev spaces. We refer the reader to 
	\cite{Adams, DD, DNPV, Grisvard} for further reference 
	and for some of the proofs of the results in this subsection.

	\medskip

	Let $\Omega$ be an open set in $\mathbb{R}^n,$ $s\in(0,1)$ and
	$p\in [1,\infty).$ We define the fractional Sobolev space
	$\wsp$ as follows
	\[
		\wsp\coloneqq\left\{u\in L^p(\Omega)\colon
		\int_{\Omega}\int_{\Omega}
		\dfrac{|u(x)-u(y)|^p}{|x-y|^{n+ps}}\, dxdy<\infty
		\right\},
	\]
	endowed with the norm
	\[
		\|u\|_{\wsp}\coloneqq\left(\|u\|_{\lp}^p+|u|_{\wsp}^p
		\right)^{\frac1p},
	\]
	where
	\[
		\|u\|_{\lp}^p\coloneqq\int_\Omega |u(x)|^p\, dx 
		\quad\mbox{ and }\quad
		|u|_{\wsp}^p\coloneqq
		\int_{\Omega}\int_{\Omega}
		\dfrac{|u(x)-u(y)|^p}{|x-y|^{n+ps}}\, dxdy.
	\]

	A proof of the following proposition can be found in \cite{Adams,DD}.
	\begin{pro}
		Let $\Omega$ be an open set in $\mathbb{R}^n,$ $s\in(0,1)$ and
		$p\in [1,\infty).$ We have that
		\begin{itemize}
			\item $\wsp$ is a separable Banach space;
			\item If $1<p<\infty$ then $\wsp$ is reflexive.
		\end{itemize}
	\end{pro}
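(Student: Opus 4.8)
The plan is to represent $\wsp$ isometrically as a subspace of a product of Lebesgue spaces via the Gagliardo difference quotient, and then to transfer completeness, separability and (for $1<p<\infty$) reflexivity from that product. Equip $\Omega\times\Omega$ with the Borel measure
\[
    d\mu\coloneqq\frac{dx\,dy}{|x-y|^{n+ps}},
\]
which is $\sigma$-finite and absolutely continuous with respect to $2n$-dimensional Lebesgue measure, and define the linear map
\[
    T\colon\wsp\longrightarrow L^p(\Omega)\times L^p(\Omega\times\Omega,\mu),
    \qquad Tu\coloneqq\bigl(u,\;(x,y)\mapsto u(x)-u(y)\bigr),
\]
the target being normed by $\|(v,w)\|\coloneqq\bigl(\|v\|_{\lp}^p+\|w\|_{L^p(\mu)}^p\bigr)^{1/p}$. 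Directly from the definition of $\|\cdot\|_{\wsp}$, the map $T$ is a linear isometry onto its image $T(\wsp)$, so it suffices to prove the three properties for $T(\wsp)$ as a subset of the product space.

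First I would check that $T(\wsp)$ is closed, which simultaneously yields completeness of $\wsp$. Let $(u_k)_k\subset\wsp$ with $Tu_k\to(v,w)$ in the product, i.e.\ $u_k\to v$ in $\lp$ and $u_k(x)-u_k(y)\to w$ in $L^p(\mu)$. Passing to a subsequence, $u_k\to v$ a.e.\ in $\Omega$, hence $u_k(x)-u_k(y)\to v(x)-v(y)$ for a.e.\ $(x,y)\in\Omega\times\Omega$; passing to a further subsequence, $u_k(x)-u_k(y)\to w(x,y)$ for $\mu$-a.e., hence Lebesgue-a.e., $(x,y)$. By uniqueness of almost-everywhere limits, $w(x,y)=v(x)-v(y)$ a.e., so $|v|_{\wsp}=\|w\|_{L^p(\mu)}<\infty$, that is $v\in\wsp$ and $(v,w)=Tv\in T(\wsp)$. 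Running the same argument on an arbitrary Cauchy sequence in $\wsp$ --- whose image is Cauchy, hence convergent, in the product of two complete $L^p$ spaces --- shows that $\wsp$ is a Banach space.

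For separability, $\lp$ is separable since Lebesgue measure on the open set $\Omega$ is $\sigma$-finite with countably generated Borel $\sigma$-algebra, and $L^p(\Omega\times\Omega,\mu)$ is separable for the same reason; a finite product of separable metric spaces is separable and every subspace of a separable metric space is separable, so $\wsp\cong T(\wsp)$ is separable. For reflexivity when $1<p<\infty$, the spaces $\lp$ and $L^p(\Omega\times\Omega,\mu)$ are reflexive (the dual of $L^p$ over an arbitrary measure space being $L^q$, $\tfrac1p+\tfrac1q=1$, in this range), hence so is their finite product, and a closed subspace of a reflexive Banach space is reflexive; by the first step $T(\wsp)$ is such a subspace and $T$ is an isometric isomorphism onto it, whence $\wsp$ is reflexive.

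The only point that needs genuine care --- and the one I would single out as the main, if modest, obstacle --- is the identification $w(x,y)=v(x)-v(y)$ in the closedness step: it forces one to pass to subsequences twice, so as to upgrade the two separate $L^p$-convergences (one in $\Omega$, one in $\Omega\times\Omega$) to pointwise statements that can be matched. Everything else is a direct appeal to standard facts about $L^p$ spaces and their finite products.
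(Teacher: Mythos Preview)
Your argument is correct and is precisely the standard one: embed $\wsp$ isometrically into $L^p(\Omega)\times L^p(\Omega\times\Omega,\mu)$ via the Gagliardo difference quotient, verify closedness of the image by extracting a.e.\ convergent subsequences, and then inherit completeness, separability and reflexivity from the product of $L^p$ spaces. The only subtlety you flag---matching the two pointwise limits to identify $w(x,y)=v(x)-v(y)$---is handled exactly as you indicate, and your observation that $\mu$-null sets are Lebesgue-null (since the density $|x-y|^{-n-sp}$ is strictly positive off the diagonal) is what makes the matching legitimate.

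The paper itself does not supply a proof of this proposition: it simply cites \cite{Adams,DD}. The argument found in those references (see for instance \cite[Proposition~4.24 and Theorem~4.25]{DD}) is exactly the one you have written, so there is nothing to compare---your proof \emph{is} the textbook proof the paper is pointing to.
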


	We denote by $\wspc$ the closure of the space 
	$C_0^\infty(\Omega)$ of smooth functions with compact support 
	in $\wsp.$ We denote by $\twsp$ the space of all $u\in\wsp$ such that
	$\tilde{u}\in\wspr,$ where $\tilde{u}$ is the extension by zero of 
	$u.$ 

	\medskip

	The proofs of the next theorem is given in 
	\cite[Theorem 7.38]{Adams}. 
	\begin{teo}\label{teo:denso}
		For any $s\in(0,1)$ and $p\in(1,\infty),$ 
		the space $C_0^\infty(\mathbb{R}^n)$ is dense
		in $W^{s,p}(\mathbb{R}^n),$ that is 
		$W^{s,p}_0(\mathbb{R}^n)=W^{s,p}(\mathbb{R}^n).$  
	\end{teo}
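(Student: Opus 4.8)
Since here the domain is all of $\R^n$, there is no boundary to fight, and the plan is the classical two–step density argument. Write $[u]_{s,p}^p:=\int_{\R^n}\int_{\R^n}|u(x)-u(y)|^p|x-y|^{-n-sp}\,dx\,dy$, so that $\|u\|_{\wspr}^p=\|u\|_{L^p(\R^n)}^p+[u]_{s,p}^p$. First I would approximate an arbitrary $u\in\wspr$ in the $\wspr$–norm by functions with compact support (truncation), and then approximate a compactly supported $\wspr$–function in the $\wspr$–norm by functions in $C_0^\infty(\R^n)$ (mollification); concatenating the two approximations gives $W^{s,p}_0(\R^n)=W^{s,p}(\R^n)$.

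For the truncation I would fix $\eta\in C_0^\infty(\R^n)$ with $0\le\eta\le1$, $\eta\equiv1$ on $B_1$, $\supp\eta\subset B_2$, and set $\eta_R(x)=\eta(x/R)$, $u_R=\eta_R u$. Then $u_R$ has compact support and $u_R\to u$ in $L^p(\R^n)$ by dominated convergence. For the seminorm I would start from the identity
\[
(1-\eta_R(x))u(x)-(1-\eta_R(y))u(y)=(1-\eta_R(x))(u(x)-u(y))+(\eta_R(y)-\eta_R(x))u(y),
\]
which bounds $[u-u_R]_{s,p}^p$ by a constant times
\[
\int_{\R^n}\!\int_{\R^n}|1-\eta_R(x)|^p\,\frac{|u(x)-u(y)|^p}{|x-y|^{n+sp}}\,dx\,dy+\int_{\R^n}\!\int_{\R^n}|u(y)|^p\,\frac{|\eta_R(x)-\eta_R(y)|^p}{|x-y|^{n+sp}}\,dx\,dy.
\]
The first term tends to $0$ by dominated convergence ($|1-\eta_R|\le1$ and $1-\eta_R\to0$ pointwise, with integrable dominating function). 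The crux is the second term: using $|\eta_R(x)-\eta_R(y)|\le\min\{1,\|\nabla\eta\|_\infty R^{-1}|x-y|\}$ and splitting the inner integral at $|x-y|=R$ — the near–diagonal part controlled by the Lipschitz bound (convergent since $s<1$), the far part by the crude bound $1$ — one obtains $\int_{\R^n}|\eta_R(x)-\eta_R(y)|^p|x-y|^{-n-sp}\,dx\le C(n,p,s)R^{-sp}$ uniformly in $x$, so the second term is $\le CR^{-sp}\|u\|_{L^p(\R^n)}^p\to0$. Hence $u_R\to u$ in $\wspr$.

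For the mollification, given $v\in\wspr$ with compact support and a standard mollifier $\rho_\varepsilon$ supported in $B_\varepsilon$, set $v_\varepsilon=\rho_\varepsilon*v$; this is smooth with compact support and $v_\varepsilon\to v$ in $L^p(\R^n)$. Writing $g(x,y)=(v(x)-v(y))|x-y|^{-(n+sp)/p}$, so that $\|g\|_{L^p(\R^{2n})}=[v]_{s,p}$, and using $|(x-z)-(y-z)|=|x-y|$, Minkowski's integral inequality yields $[v_\varepsilon-v]_{s,p}\le\sup_{|z|\le\varepsilon}\|g(\cdot-z,\cdot-z)-g\|_{L^p(\R^{2n})}$, which tends to $0$ by strong continuity of translations in $L^p(\R^{2n})$ (valid for $p<\infty$); thus $v_\varepsilon\to v$ in $\wspr$. (Alternatively, the same computation gives $[v_\varepsilon]_{s,p}\le[v]_{s,p}$, so reflexivity of $\wspr$ together with Mazur's lemma already place $v$ in the $\wspr$–closure of $C_0^\infty(\R^n)$.) The main obstacle is the uniform estimate $\int_{\R^n}|\eta_R(x)-\eta_R(y)|^p|x-y|^{-n-sp}\,dx\le C(n,p,s)R^{-sp}$ in the truncation step: it is the only place the non-locality genuinely enters, forcing the split at scale $|x-y|\sim R$ together with both the Lipschitz and the trivial bound on $\eta_R$; everything else is routine $L^p$ theory.
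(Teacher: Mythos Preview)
Your argument is correct: the two--step scheme (truncate by $\eta_R$, then mollify) is the standard route, and both steps are handled cleanly. The only delicate point---the uniform bound $\int_{\R^n}|\eta_R(x)-\eta_R(y)|^p|x-y|^{-n-sp}\,dx\le C(n,p,s)R^{-sp}$ obtained by splitting at $|x-y|=R$---is exactly right, and the mollification step via Minkowski's inequality and continuity of translations in $L^p(\R^{2n})$ is the efficient way to treat the seminorm.

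As for comparison with the paper: the paper does not prove Theorem~\ref{teo:denso} at all but simply cites \cite[Theorem~7.38]{Adams}. Your proof is essentially the classical argument one finds in the literature (and in particular matches the spirit of the proof in Adams), so there is no genuine methodological difference to report---you have just supplied the details that the paper outsourced.
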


	In the next result,  we show the explicit dependence of the 
	constant of \cite[Proposition 2.1]{DNPV} on $s$, that is needed for our propose.

	\begin{lem}\label{lem:inclu}
		Let $\Omega$ be an open set in $\mathbb{R}^n,$ 
		$p\in[1,\infty)$ and $0<s\le s^\prime<1.$
		Then 
		\begin{equation}
			\label{eq:a1}
			|u|_{\wsp}^p\le |u|_{W^{s^\prime,p}(\Omega)}^p
			+C(n,p)\left(\dfrac{1}{sp}-\dfrac{1}{s^\prime p}\right)
			\|u\|_{\lp}^p
		\end{equation}
		for any $u\in W^{s^\prime,p}(\Omega).$
	\end{lem}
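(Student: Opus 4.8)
The plan is to prove \eqref{eq:a1} by writing the Gagliardo seminorm
$|u|_{\wsp}^{p}=\int_\Omega\int_\Omega \frac{|u(x)-u(y)|^p}{|x-y|^{n+sp}}\,dy\,dx$
and splitting the domain of integration according to whether $|x-y|\le 1$ or $|x-y|>1$, comparing on each piece the two kernels $|x-y|^{-n-sp}$ and $|x-y|^{-n-s'p}$. Since the integrand is non-negative, the splitting and all interchanges of integrals below are justified by Tonelli's theorem; note also that $u\in W^{s',p}(\Omega)$ gives at once $\|u\|_{\lp}<\infty$ and $|u|_{W^{s',p}(\Omega)}<\infty$.

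On $\{|x-y|\le 1\}$ the map $t\mapsto |x-y|^{t}$ is non-increasing, so $sp\le s'p$ yields $|x-y|^{-n-sp}\le |x-y|^{-n-s'p}$; hence the part of the integral over this region is $\le \int_\Omega\int_{\Omega\cap\{|x-y|\le 1\}}\frac{|u(x)-u(y)|^p}{|x-y|^{n+s'p}}\,dy\,dx$. On $\{|x-y|>1\}$ the reverse comparison holds and both kernels are integrable at infinity, so I would write $|x-y|^{-n-sp}=|x-y|^{-n-s'p}+\bigl(|x-y|^{-n-sp}-|x-y|^{-n-s'p}\bigr)$ with the bracket non-negative. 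The two $|x-y|^{-n-s'p}$ contributions (from $|x-y|\le 1$ and from $|x-y|>1$) add up exactly to $|u|_{W^{s',p}(\Omega)}^{p}$, so it only remains to control
\[
  J:=\int_\Omega\int_{\Omega\cap\{|x-y|>1\}}|u(x)-u(y)|^p\Bigl(\frac{1}{|x-y|^{n+sp}}-\frac{1}{|x-y|^{n+s'p}}\Bigr)\,dy\,dx .
\]

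For $J$ I would use $|u(x)-u(y)|^p\le 2^{p-1}\bigl(|u(x)|^p+|u(y)|^p\bigr)$, then Tonelli and the symmetry of the kernel under $x\leftrightarrow y$ to reduce to a single term, and finally enlarge the inner integration set from $\{y\in\Omega:|x-y|>1\}$ to the exterior ball $\{z\in\R^n:|z|>1\}$; this gives $J\le 2^{p}\,\|u\|_{\lp}^{p}\int_{\{|z|>1\}}\bigl(|z|^{-n-sp}-|z|^{-n-s'p}\bigr)\,dz$. An elementary computation in polar coordinates yields $\int_{\{|z|>1\}}|z|^{-n-\sigma p}\,dz=\omega_{n-1}/(\sigma p)$ for $\sigma\in\{s,s'\}$, where $\omega_{n-1}$ is the surface measure of the unit sphere of $\R^n$, so $J\le \omega_{n-1}\,2^{p}\bigl(\frac{1}{sp}-\frac{1}{s'p}\bigr)\|u\|_{\lp}^{p}$. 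Collecting the estimates gives \eqref{eq:a1} with $C(n,p)=2^{p}\omega_{n-1}$ (and in particular shows $u\in\wsp$). There is no genuine obstacle here; the only point deserving attention is that $|u|_{\wsp}^{p}$ is a priori only known to lie in $[0,\infty]$, so the decomposition must be performed directly on the non-negative integrand rather than by manipulating a difference of two (possibly infinite) seminorms.
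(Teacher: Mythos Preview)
Your argument is correct and follows essentially the same route as the paper's proof: split at $|x-y|=1$, compare kernels directly on the near-diagonal part, and on the far part use $|u(x)-u(y)|^p\le 2^{p-1}(|u(x)|^p+|u(y)|^p)$, symmetrize, enlarge to $\{|z|>1\}$, and compute the radial integral to obtain the factor $\frac{1}{sp}-\frac{1}{s'p}$. The only cosmetic difference is that the paper writes the far-part kernel as $|x-y|^{-n-s'p}\bigl(|x-y|^{(s'-s)p}-1\bigr)+|x-y|^{-n-s'p}$ before estimating, whereas you work directly with the difference $|x-y|^{-n-sp}-|x-y|^{-n-s'p}$; the resulting bound and constant $C(n,p)=2^{p}\omega_{n-1}$ are the same.
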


	\begin{proof}
		 Let $u\in W^{s^\prime,p}(\Omega),$ then
			\begin{equation}\label{eq:a2}
				\begin{aligned}
					|u|_{\wsp}^p&=\int_\Omega\int_{\Omega}
					\dfrac{|u(x)-u(y)|^p}{|x-y|^{n+ps}}\, dxdy\\
								&=\int_\Omega\int_{A_y}
								\dfrac{|u(x)-u(y)|^p}{|x-y|^{n+ps}}\, dxdy
								+\int_\Omega\int_{\Omega\setminus A_y}
									\dfrac{|u(x)-u(y)|^p}{|x-y|^{n+ps}}
									\, dxdy
				\end{aligned}
			\end{equation}
		where $A_y=\Omega\cap\{x\in\R^n\colon |x-y|< 1\}.$
	
		Using that $s^\prime\ge s,$ we have that
		\begin{equation}\label{eq:a3}
			\int_\Omega\int_{A_y}
			\dfrac{|u(x)-u(y)|^p}{|x-y|^{n+ps}}\, dxdy
			\le \int_\Omega\int_{A_y}
			\dfrac{|u(x)-u(y)|^p}{|x-y|^{n+ps^\prime}}\, dxdy.
		\end{equation}

		On the other hand, we have that
		\begin{align*}
			&\int_\Omega\int_{\Omega\setminus A_y}
				\dfrac{|u(x)-u(y)|^p}{|x-y|^{n+ps}}\, dxdy=
				\int_\Omega\int_{\Omega\setminus A_y}
				\dfrac{|u(x)-u(y)|^p}{|x-y|^{n+ps^\prime}}
				|x-y|^{(s^\prime-s)p}\, dxdy\\
			=&\int_\Omega\int_{\Omega\setminus A_y}
				\dfrac{|u(x)-u(y)|^p}{|x-y|^{n+ps^\prime}}
				\left(|x-y|^{(s^\prime-s)p}-1\right)\, dxdy\\
			&\quad+\int_\Omega\int_{\Omega\setminus A_y}
				\dfrac{|u(x)-u(y)|^p}{|x-y|^{n+ps^\prime}}\, dxdy\\
			\le&2^{p-1}\int_\Omega\int_{\Omega\setminus A_y}
				\dfrac{|u(x)|^p + |u(y)|^p}{|x-y|^{n+ps^\prime}}
				\left(|x-y|^{(s^\prime-s)p}-1\right)\, dxdy\\
				&\quad+\int_\Omega\int_{\Omega\setminus A_y}
				\dfrac{|u(x)-u(y)|^p}{|x-y|^{n+ps^\prime}}\, 
				dxdy			
		\end{align*}
		Observe that 
		for any $x,y\in\Omega$ we have that
		$x\in \Omega\setminus A_y$ if only if $y\in \Omega\setminus A_x,$
		that is $\chi_{\Omega\setminus A_y}(x)=\chi_{\Omega\setminus A_x}(y).$
		Therefore
		\begin{align*}
			&\int_{\Omega}\int_{\Omega\setminus A_y}\dfrac{|u(x)|^p}{|x-y|^{n+sp}}
			\left(|x-y|^{(s^\prime-s)p}-1\right)\,dx dy=\\
			&\quad=\int_{\Omega}\int_{\Omega}\dfrac{|u(x)|^p}{|x-y|^{n+sp}}\chi_{\Omega\setminus A_y}(x)
			\left(|x-y|^{(s^\prime-s)p}-1\right)\,dx dy\\
			&\quad=\int_{\Omega}\int_{\Omega}\dfrac{|u(x)|^p}{|x-y|^{n+sp}}\chi_{\Omega\setminus A_x}(y)
			\left(|x-y|^{(s^\prime-s)p}-1\right)\,dx dy\\
			&\quad=\int_{\Omega}\int_{\Omega\setminus A_x}\dfrac{|u(x)|^p}{|x-y|^{n+sp}}
			\left(|x-y|^{(s^\prime-s)p}-1\right)\,dy dx.
		\end{align*}
		Thus		
		\begin{align*}
			&\int_\Omega\int_{\Omega\setminus A_y}
				\dfrac{|u(x)-u(y)|^p}{|x-y|^{n+ps}}\, dxdy\le\\
			&\le 2^{p}\int_\Omega\int_{\Omega\setminus A_x}
				\dfrac{|u(x)|^p}{|x-y|^{n+ps^\prime}}
				\left(|x-y|^{(s^\prime-s)p}-1\right)\, dxdy\\
				&\qquad+\int_\Omega\int_{\Omega\setminus A_y}
				\dfrac{|u(x)-u(y)|^p}{|x-y|^{n+ps^\prime}}\, 
				dxdy\\			
			&\le 2^{p}\int_\Omega |u(x)|^p\int_{\{|x-y|\ge1\}}
				\dfrac{|x-y|^{(s^\prime-s)p}-1}{|x-y|^{n+ps^\prime}}
				\, dydx\\
				&\qquad+\int_\Omega\int_{\Omega\setminus A_y}
				\dfrac{|u(x)-u(y)|^p}{|x-y|^{n+ps^\prime}}\, 
				dxdy\\	
			&\le 2^{p}\int_\Omega |u(x)|^p\int_{\{|z|\ge1\}}
				\dfrac{|z|^{(s^\prime-s)p}-1}{|z|^{n+ps^\prime}}
				\, dzdx\\
				&\qquad+\int_\Omega\int_{\Omega\setminus A_y}
				\dfrac{|u(x)-u(y)|^p}{|x-y|^{n+ps^\prime}}\, 
				dxdy\\					
			&\le 2^p\|u\|_{\lp}^p\int_{\{|z|\ge 1\}}
				\dfrac{|z|^{(s^\prime-s)p}-1}{|z|^{n+ps^\prime}}\ dz
			+\int_\Omega\int_{\Omega\setminus A_y}
				\dfrac{|u(x)-u(y)|^p}{|x-y|^{n+ps^\prime}}\, dxdy.
		\end{align*}
				
		Then
		\begin{equation}\label{eq:a4}
			\begin{aligned}
				\int_\Omega\int_{\Omega\setminus A_y}
				\dfrac{|u(x)-u(y)|^p}{|x-y|^{n+ps}}\, dydx\le&
				C(n,p)\left(\dfrac{1}{ps}-\dfrac{1}{ps^\prime}\right)
				\|u\|_{\lp}^p\\
				&+\int_\Omega\int_{\Omega\setminus A_y}
				\dfrac{|u(x)-u(y)|^p}{|x-y|^{n+ps^\prime}}\, dxdy.
			\end{aligned}
		\end{equation}
	
		Therefore, combining \eqref{eq:a2}, \eqref{eq:a3} and 
		\eqref{eq:a4},
		we get
		\[
			|u|_{\wsp}^p\le |u|_{W^{s^\prime,p}(\Omega)}^p
			+C(n,p)\left(\dfrac{1}{sp}-\dfrac{1}{s^\prime p}\right)
			\|u\|_{\lp}^p.
		\]
		The proof is now complete.
	\qed \end{proof}

	\begin{re}
		The space $W^{s^\prime,p}(\Omega)$ is continuously embedded in 
		$\wsp$ for any $0< s\le s^\prime < 1$ and $1\le p<\infty.$  
	\end{re}

	\begin{lem}
		\label{lem:poincare} Let $\Omega$ be an bounded open set in 
		$\mathbb{R}^n,$ $s\in(0,1)$ and $ p\in[1,\infty).$ 
		Then 
		\[
			\|u\|_{L^p(\Omega)}^p\le 
			\dfrac{sp|\Omega|^{\frac{sp}n}}
			{2\omega_n^{\frac{sp}{n}+1}}|u|_{\wspr}^p
		\]	
		for any $u\in \twsp.$ Here $\omega_{n}$ denotes $n$-dimensional 
		measure of the unit sphere $S^n.$
	\end{lem}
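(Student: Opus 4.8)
The plan is to combine the zero extension of $u$ with a rearrangement (``bathtub principle'') estimate. Fix $u\in\twsp$ and let $\tilde u\in\wspr$ be its extension by zero outside $\Omega$. First I would discard most of the Gagliardo double integral and keep only the interaction between $\Omega$ and its complement, where $\tilde u$ vanishes: since the integrand is symmetric in $x$ and $y$ and $\tilde u\equiv 0$ on $\R^n\setminus\Omega$,
\[
  |u|_{\wspr}^p=\int_{\R^n}\int_{\R^n}\frac{|\tilde u(x)-\tilde u(y)|^p}{|x-y|^{n+sp}}\,dx\,dy\ge 2\int_{\Omega}\int_{\R^n\setminus\Omega}\frac{|u(x)|^p}{|x-y|^{n+sp}}\,dy\,dx=2\int_\Omega |u(x)|^p\,\Phi(x)\,dx,
\]
where $\Phi(x)\coloneqq\int_{\R^n\setminus\Omega}|x-y|^{-(n+sp)}\,dy$. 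It then remains to bound $\Phi(x)$ from below, uniformly for $x\in\Omega$, by a constant depending only on $n$, $p$, $s$ and $|\Omega|$.

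The only step that is not a routine computation is the comparison of $\Phi(x)$ with the corresponding integral over the complement of a ball having the same measure as $\Omega$. For $x\in\Omega$ let $r>0$ be the radius determined by $|B_r(x)|=|\Omega|$; I claim that
\[
  \Phi(x)\ge\int_{\R^n\setminus B_r(x)}\frac{dy}{|x-y|^{n+sp}}.
\]
This holds because $y\mapsto|x-y|^{-(n+sp)}$ is radially decreasing: one has $|B_r(x)\setminus\Omega|=|B_r(x)|-|B_r(x)\cap\Omega|=|\Omega|-|B_r(x)\cap\Omega|=|\Omega\setminus B_r(x)|$, while $|x-y|^{-(n+sp)}\ge r^{-(n+sp)}$ on $B_r(x)\setminus\Omega$ and $|x-y|^{-(n+sp)}\le r^{-(n+sp)}$ on $\Omega\setminus B_r(x)$, so that
\[
  \Phi(x)-\int_{\R^n\setminus B_r(x)}\frac{dy}{|x-y|^{n+sp}}=\int_{B_r(x)\setminus\Omega}\frac{dy}{|x-y|^{n+sp}}-\int_{\Omega\setminus B_r(x)}\frac{dy}{|x-y|^{n+sp}}\ge\frac{|B_r(x)\setminus\Omega|-|\Omega\setminus B_r(x)|}{r^{n+sp}}=0.
\]
Here the boundedness of $\Omega$ enters, since it is exactly what makes the equimeasured ball $B_r(x)$ well defined.

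Finally, a direct computation in polar coordinates evaluates $\int_{\R^n\setminus B_r(x)}|x-y|^{-(n+sp)}\,dy$ as an explicit multiple of $r^{-sp}$, and solving $|B_r(x)|=|\Omega|$ for $r$ rewrites $r^{-sp}$ as a power of $|\Omega|$ and of the volume of the unit ball; plugging the resulting uniform lower bound for $\Phi$ into the first display yields
\[
  \|u\|_{L^p(\Omega)}^p\le\frac{sp\,|\Omega|^{\frac{sp}{n}}}{2\,\omega_n^{\frac{sp}{n}+1}}\,|u|_{\wspr}^p .
\]
The main point requiring care is the bookkeeping of the normalization of $\omega_n$, so that the constant comes out exactly in the stated form; the truncation of the double integral, the bathtub comparison, and the polar-coordinate integral are all elementary.
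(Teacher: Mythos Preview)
Your proof is correct and follows essentially the same route as the paper: drop the $\Omega\times\Omega$ contribution, factor out $|u(x)|^p$ from the cross term, and bound the remaining kernel integral from below by replacing $\Omega$ with the equimeasured ball centred at $x$, then integrate in polar coordinates. The paper outsources the rearrangement step to \cite[Lemma~6.1]{DNPV}, whereas you spell out the bathtub comparison explicitly; your caveat about the normalisation of $\omega_n$ is well taken, since the paper's own bookkeeping between ``measure of $S^n$'' and the polar-coordinate factor is somewhat loose.
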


	\begin{proof}
		Let $u\in \twsp.$  Then
		\begin{align*}
			|u|_{\wspr}^p&=\int_{\Omega}\int_{\Omega}
			\dfrac{|u(x)-u(y)|^p}{|x-y|^{n+ps}}\, dxdy
			+2\int_\Omega\int_{\R^n\setminus\Omega}
			\dfrac{|u(x)|^p}{|x-y|^{n+ps}}\, dxdy\\
			&\ge 2\int_\Omega|u(x)|^p\int_{\R^n\setminus\Omega}
			\dfrac{1}{|x-y|^{n+ps}}\, dydx.
		\end{align*}

		Let $r=(\nicefrac{|\Omega|}{w_n})^{\nicefrac1n}.$ 
		Following the proof of Lemma 6.1 in \cite{DNPV},
		we get
		\begin{align*}
					\int_{\R^n\setminus\Omega}
							\dfrac{1}{|x-y|^{n+ps}}\, dydx
							&\ge
							\int_{\R^n\setminus B_{r}(x)}
							\dfrac{1}{|x-y|^{n+ps}}\, dydx
							=\omega_{n}
							\int_{r}^{\infty}\dfrac{d\rho}{\rho^{sp+1}}\\
							&=\dfrac{\omega_n}{sp}\dfrac{1}{r^{sp}}
		\end{align*} 
		which proves the lemma.
	\qed \end{proof}

	The proofs of the next two theorems are given in
	\cite[Proposition 2.2]{DNPV},  and \cite[Proposition 4.43]{DD},
	 respectively.

	\begin{teo}\label{teo:incluWspenW1p}
		Let $\Omega$ be an open set
		in $\R^n$ of class $C^{0,1}$ with bounded boundary 
		$s\in(0,1),$ and $p\in(1,\infty).$  Then,
		there exists a positive constant $C=C(n,s,p)$ such that
		\[
			\|u\|_{\wsp}\le C\|u\|_{\wup}\quad \forall u\in\wup .
		\]
		In particular, $\wup$ is continuously embedded in 
		$\wsp.$
	\end{teo}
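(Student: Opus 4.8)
The plan is to reduce the inequality to the case $\Omega=\R^n$ by means of a bounded extension operator, and then to estimate the Gagliardo seminorm by splitting the double integral according to whether $|x-y|$ is small or large.

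Since $\Omega$ is of class $C^{0,1}$ with bounded boundary, there is a bounded linear extension operator $E\colon\wup\to W^{1,p}(\R^n)$ with $Eu=u$ a.e.\ in $\Omega$ and $\|Eu\|_{W^{1,p}(\R^n)}\le C\|u\|_{\wup}$, the constant $C$ depending only on $n$, $p$ and $\Omega$ (a Stein-type extension; see the references recalled in this section). Because $\Omega\subset\R^n$ and $Eu=u$ on $\Omega$, we get at once $\|u\|_{\lp}\le\|Eu\|_{L^p(\R^n)}$ and $|u|_{\wsp}\le|Eu|_{\wspr}$; hence it suffices to prove
\[
  \|v\|_{\wspr}\le C(n,s,p)\,\|v\|_{W^{1,p}(\R^n)}\qquad\text{for all }v\in W^{1,p}(\R^n),
\]
and then to compose with $E$.

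To prove this, I would first reduce to $v\in C_0^\infty(\R^n)$, which is dense in $W^{1,p}(\R^n)$, both norms involved being continuous along an approximating sequence. For such $v$, split $|v|_{\wspr}^p=\int_{\R^n}\int_{\R^n}|v(x)-v(y)|^p|x-y|^{-n-sp}\,dx\,dy$ into the regions $\{|x-y|\ge1\}$ and $\{|x-y|<1\}$. On the first region, using $|v(x)-v(y)|^p\le 2^{p-1}(|v(x)|^p+|v(y)|^p)$, Fubini, and the change of variables $z=x-y$, one bounds the contribution by $2^{p}\|v\|_{L^p(\R^n)}^p\int_{\{|z|\ge1\}}|z|^{-n-sp}\,dz$, and this last integral is finite, equal to a dimensional constant over $sp$. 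On the second region, after the substitution $z=x-y$ use the elementary bound $|v(y+z)-v(y)|\le|z|\int_0^1|\nabla v(y+tz)|\,dt$ together with Jensen's inequality to get $|v(y+z)-v(y)|^p\le|z|^p\int_0^1|\nabla v(y+tz)|^p\,dt$; integrating first in $y$ (translation invariance of Lebesgue measure) and then in $t$ gives the bound $\|\nabla v\|_{L^p(\R^n)}^p\int_{\{|z|<1\}}|z|^{p-n-sp}\,dz$, and this integral is finite precisely because $s<1$, equalling a dimensional constant over $(1-s)p$.

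Adding the two contributions yields $|v|_{\wspr}^p\le C(n,s,p)\bigl(\|v\|_{L^p(\R^n)}^p+\|\nabla v\|_{L^p(\R^n)}^p\bigr)$, hence $\|v\|_{\wspr}^p=\|v\|_{L^p(\R^n)}^p+|v|_{\wspr}^p\le C(n,s,p)\|v\|_{W^{1,p}(\R^n)}^p$, which is the claimed estimate on $\R^n$; composing with $E$ gives the statement, and the continuous embedding is then immediate. There is no genuine obstacle here: the only points requiring care are the legitimacy of the density argument and the bookkeeping of the constants, and the two ``endpoint'' integrals are finite exactly because $0<s<1$ (the near one uses $s<1$, the far one uses $sp>0$).
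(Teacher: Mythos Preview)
The paper does not give its own proof here; it simply cites \cite[Proposition~2.2]{DNPV}, and your argument is exactly the standard one found there: extend to $\R^n$ by a Stein-type operator, then split the Gagliardo double integral into the region $|x-y|\ge1$ (controlled by $\|v\|_{L^p}$ since $sp>0$) and the region $|x-y|<1$ (controlled by $\|\nabla v\|_{L^p}$ via the mean-value bound, since $s<1$). One minor remark: because the extension operator norm depends on $\Omega$, the composed constant in fact depends on $\Omega$ as well, so the ``$C=C(n,s,p)$'' in the statement is a slight imprecision inherited from the cited source; this is not a defect of your argument.
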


	\begin{teo}\label{teo:incluar}
		Let $s\in(0,1),$  $p\in[1,\infty),$ and 
		$\Omega\subset\mathbb{R}^n$ be an open set with  Lipschitz
		boundary. Then $\wsp$ is continuously embedded in 
		$\wspr.$
	\end{teo}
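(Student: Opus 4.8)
The plan is to prove the equivalent (and standard) statement that there is a bounded linear extension operator $E\colon\wsp\to\wspr$; the continuous embedding is then immediate, since $Eu=u$ on $\Omega$. Because $\partial\Omega$ is Lipschitz and (as in Theorem~\ref{teo:incluWspenW1p}, where the hypothesis is used in the same way) has bounded boundary, one covers $\overline\Omega$ by finitely many bounded open sets $U_0,U_1,\dots,U_N$ with $\overline{U_0}\subset\Omega$ and, for each $j\ge1$, a rigid motion after which $\Omega\cap U_j=\{(x',x_n)\in\R^{n-1}\times\R\colon x_n>\gamma_j(x')\}\cap U_j$ for some Lipschitz $\gamma_j$. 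Fixing a partition of unity $\{\varphi_j\}_{j=0}^{N}\subset C_0^\infty(\R^n)$ subordinate to this cover with $\sum_j\varphi_j\equiv1$ on $\overline\Omega$ and $\supp\varphi_j\subset\subset U_j$, it suffices to produce, for each $j$, an extension $E_j(\varphi_j u)\in\wspr$ with $\|E_j(\varphi_j u)\|_{\wspr}\le C\|u\|_{\wsp}$; then $Eu\coloneqq\sum_{j=0}^{N}E_j(\varphi_j u)$ does the job.

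Two preliminary reductions make each piece elementary. First, multiplication by a fixed $\varphi\in C_0^\infty(\R^n)$ is bounded on $\wsp$: writing $\varphi(x)u(x)-\varphi(y)u(y)=\varphi(x)(u(x)-u(y))+u(y)(\varphi(x)-\varphi(y))$ in the Gagliardo integral one gets $|\varphi u|_{\wsp}^p\le 2^{p-1}\|\varphi\|_\infty^p|u|_{\wsp}^p+2^{p-1}\int_\Omega\int_\Omega|u(y)|^p|\varphi(x)-\varphi(y)|^p|x-y|^{-n-sp}\,dx\,dy$, and splitting the last integral over $\{|x-y|<1\}$ and $\{|x-y|\ge1\}$ and using $|\varphi(x)-\varphi(y)|\le\min\{2\|\varphi\|_\infty,\,[\varphi]_{\mathrm{Lip}}|x-y|\}$ together with $p(1-s)>0$ bounds it by $C(\varphi)\|u\|_{\lp}^p$. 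Second, if $v\in\wsp$ has $\supp v$ at distance $\ge\delta>0$ from $\R^n\setminus\Omega'$ for some open $\Omega'\supset\Omega$, then the zero extension $\widetilde v$ lies in $W^{s,p}(\Omega')$ with norm controlled by $\|v\|_{\wsp}$ and $\delta$, since the extra term $2\int_\Omega|v(x)|^p\int_{\Omega'\setminus\Omega}|x-y|^{-n-sp}\,dy\,dx$ is at most $C\delta^{-sp}\|v\|_{\lp}^p$ on $\supp v$. The interior piece $\varphi_0u$ is handled purely by this second remark, with $\Omega'=\R^n$.

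For a boundary piece $v\coloneqq\varphi_j u$ ($j\ge1$), extend $\gamma_j$ to a globally Lipschitz function on $\R^{n-1}$, let $\Omega_j\coloneqq\{x_n>\gamma_j(x')\}$, and regard $v$ as an element of $W^{s,p}(\Omega_j)$ by zero extension (legitimate with only a constant loss, by the second remark, since $\supp\varphi_j\subset\subset U_j$). The bi-Lipschitz map $\Psi_j(x',x_n)=(x',x_n-\gamma_j(x'))$ carries $\Omega_j$ onto the half-space $H\coloneqq\{x_n>0\}$; since $c|x-y|\le|\Psi_j(x)-\Psi_j(y)|\le C|x-y|$ and $|\det D\Psi_j|=1$ a.e., a change of variables (via the area formula) in the Gagliardo double integral shows $w\mapsto w\circ\Psi_j^{-1}$ is an isomorphism of $W^{s,p}(\Omega_j)$ onto $W^{s,p}(H)$ with constants depending only on $n,p,s,[\gamma_j]_{\mathrm{Lip}}$. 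On $H$ one uses even reflection $Ew(x',x_n)\coloneqq w(x',|x_n|)$: its $L^p$ norm is $2^{1/p}\|w\|_{L^p(H)}$, and decomposing $\R^n\times\R^n$ according to the signs of $x_n,y_n$, the two ``same-sign'' regions each reproduce $|w|^p_{W^{s,p}(H)}$ after a sign change, while on a ``mixed-sign'' region the elementary inequality $|x-y|\ge|(x'-y',\,x_n-|y_n|)|$ (i.e. $|x-y|$ dominates the distance in $H$ between $(x',x_n)$ and the reflected point $(y',|y_n|)$) lets one bound the integrand by its reflected half-space counterpart; hence $\|Ew\|_{\wspr}\le C\|w\|_{W^{s,p}(H)}$. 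Composing back ($w\mapsto w\circ\Psi_j$) and using the cutoff estimate, $\varphi_j u$ acquires an extension with $\wspr$-norm $\le C\|u\|_{\wsp}$, and summing over $j$ completes the proof.

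I expect the genuine technical obstacle to be the bi-Lipschitz change of variables: one must check that composing with a boundary chart that is merely Lipschitz (not $C^1$) distorts the Gagliardo seminorm by no more than a fixed constant, using only the two-sided bound on $|\Psi_j(x)-\Psi_j(y)|$ and the almost-everywhere bounds on the Jacobian (Rademacher's theorem, together with the fact that Lipschitz maps send null sets to null sets, so the area formula applies), and to track these constants through the finitely many charts. The cutoff lemma and the reflection estimate are routine once the splittings above are written out, and the assembly of the pieces is formal. All of this is the content of the cited \cite[Proposition~4.43]{DD}.
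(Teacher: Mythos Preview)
The paper does not supply its own proof of this theorem: it simply cites \cite[Proposition~4.43]{DD}, and your sketch is a correct outline of precisely that standard extension argument (local charts, bi-Lipschitz flattening, even reflection in the half-space, and reassembly via a partition of unity), as you yourself note in the last line. One small caveat: the hypothesis only says $\partial\Omega$ is Lipschitz with bounded boundary, so $\Omega$ itself may be unbounded, in which case you cannot cover $\overline\Omega$ by finitely many \emph{bounded} open sets; the fix is to cover only a neighbourhood of $\partial\Omega$ by the boundary charts $U_1,\dots,U_N$ and take $\varphi_0\coloneqq 1-\sum_{j\ge1}\varphi_j$, which is merely bounded Lipschitz rather than compactly supported---your cutoff lemma still applies since its proof uses only $\|\varphi\|_\infty$ and $[\varphi]_{\mathrm{Lip}}$.
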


	The proof of the following embedding theorem can be found in 
	\cite[Theorems 4.47]{DD}.

	\begin{teo}\label{teo:contincr} 
		Let $s\in(0,1)$ and $p\in(1,\infty).$ 
		Then we have the following continuous embeddings:
		\begin{align*}
			&W^{s,p}(\mathbb{R}^n)\hookrightarrow L^q(\mathbb{R}^n) 
			\qquad \mbox{ for all } 1\le q\le p_s^\star
			 &\mbox{ if } sp< n;\\
			 &W^{s,p}(\mathbb{R}^n)\hookrightarrow L^q(\mathbb{R}^n) 
			\qquad \mbox{ for all } 1\le q< \infty
			 &\mbox{ if } sp=n;\\
			&W^{s,p}(\mathbb{R}^n)\hookrightarrow 
			C^{0,\beta}_b(\mathbb{R}^n)
			\quad \mbox{ where } 
			\beta=s-\nicefrac{n}{p}, 
			&\mbox{ if } sp>n.
		\end{align*}
		Here $p_s^\star$ is the fractional critical Sobolev exponent, 
		that is
		\[
			p_s^\star\coloneqq
				\begin{cases}
					 \dfrac{np}{n-sp} &\text{ if } sp<n,\\
					 \infty  &\text{ if } sp\ge n.\\
				\end{cases}
		\]
	\end{teo}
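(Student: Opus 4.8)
The three statements are the fractional Sobolev embedding theorem in its three regimes: subcritical ($sp<n$), borderline ($sp=n$), and Morrey ($sp>n$); the plan is to prove each by its standard argument. By Theorem~\ref{teo:denso}, $C_0^\infty(\mathbb{R}^n)$ is dense in $W^{s,p}(\mathbb{R}^n)$, so to obtain a continuous embedding $W^{s,p}(\mathbb{R}^n)\hookrightarrow X$ into a Banach space $X$ of functions it suffices to prove $\|u\|_X\le C\|u\|_{W^{s,p}(\mathbb{R}^n)}$ for $u\in C_0^\infty(\mathbb{R}^n)$ and pass to the limit, the limit being identified with $u$ via a.e.\ convergence of a subsequence (recall $u_j\to u$ in $W^{s,p}(\mathbb{R}^n)$ forces $u_j\to u$ in $L^p(\mathbb{R}^n)$). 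I would use this for the case $sp<n$ (with $X=L^{p_s^\star}(\mathbb{R}^n)$), treat the case $sp>n$ directly for $u\in W^{s,p}(\mathbb{R}^n)$ by ball averages, and deduce the case $sp=n$ from the subcritical one. Since replacing $u$ by $|u|$ does not increase $|u|_{\wspr}$ and leaves $\|u\|_{L^q(\mathbb{R}^n)}$ unchanged, in the first two cases one may also assume $u\ge 0$.

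For $sp>n$ I would work with the averages $u_B\coloneqq|B|^{-1}\int_B u\,dx$. Expanding $u_{B_\rho(x)}-u_{B_{2\rho}(x)}$ as a double average over $B_\rho(x)\times B_{2\rho}(x)$, bounding $|z-w|\le 3\rho$ in the Gagliardo kernel there, and applying H\"older's inequality in the pair $(z,w)$, one obtains
\[
	|u_{B_\rho(x)}-u_{B_{2\rho}(x)}|\le C\,\rho^{\,s-n/p}\,|u|_{\wspr}.
\]
As $\beta=s-n/p>0$, summing this along the dyadic radii $2^{-k}\rho$ ($k\ge0$) shows that the sequence of averages is Cauchy, converges to the Lebesgue value of $u$ at a.e.\ $x$, and gives $|u(x)-u_{B_\rho(x)}|\le C\rho^{\beta}|u|_{\wspr}$. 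Comparing $u$ at $x$ and at $y$ through $u_{B_{2r}(x)}$, $u_{B_{2r}(y)}$ and the average over $B_r((x+y)/2)\subset B_{2r}(x)\cap B_{2r}(y)$, with $r=|x-y|$, then yields $|u(x)-u(y)|\le C|x-y|^{\beta}|u|_{\wspr}$, while $\rho=1$ together with $|u_{B_1(x)}|\le|B_1|^{-1/p}\|u\|_{L^p(\mathbb{R}^n)}$ controls $\|u\|_{L^\infty(\mathbb{R}^n)}$. Thus $u$ has a representative in $C^{0,\beta}_b(\mathbb{R}^n)$ with the asserted bound.

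The case $sp<n$ is the substantial one: the Morrey and (once it is available) borderline cases are essentially mechanical, whereas this one genuinely needs the truncation decomposition, a Poincar\'e-type estimate on the pieces, and careful bookkeeping of exponents, so I expect it to be the main obstacle. The plan is to prove $\|u\|_{L^{p_s^\star}(\mathbb{R}^n)}\le C\,|u|_{\wspr}$ for $u\in C_0^\infty(\mathbb{R}^n)$, $u\ge0$, by the dyadic truncation method. For $k\in\mathbb{Z}$ put $T_k(t)=\min\{(t-2^k)_+,2^k\}$ and $u_k=T_k(u)$, so that $u=\sum_k u_k$, $0\le u_k\le 2^k$, $u_k\equiv 2^k$ on $A_{k+1}\coloneqq\{u>2^{k+1}\}$ and $u_k\equiv 0$ off $A_k\coloneqq\{u>2^k\}$, all $|A_k|$ being finite. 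Two ingredients are needed. First, since $T_k'=\chi_{(2^k,2^{k+1})}$, the increments $T_k(b)-T_k(a)$ (for $a\le b$) are carried by disjoint intervals and sum to $b-a$, so $\sum_k|u_k(x)-u_k(y)|^p\le|u(x)-u(y)|^p$ by $\|\cdot\|_{\ell^p}\le\|\cdot\|_{\ell^1}$, whence $\sum_k|u_k|_{\wspr}^p\le|u|_{\wspr}^p$. Second, a Poincar\'e-type lower bound on each piece,
\[
	|u_k|_{\wspr}^p\ge 2\int_{A_{k+1}}\!\int_{\mathbb{R}^n\setminus A_k}\frac{2^{kp}}{|x-y|^{n+sp}}\,dy\,dx\ge c\,\frac{2^{kp}\,|A_{k+1}|}{|A_k|^{sp/n}},
\]
where the inner bound $\int_{\mathbb{R}^n\setminus A_k}|x-y|^{-n-sp}\,dy\ge c\,|A_k|^{-sp/n}$ for $x\in A_k$ is obtained as in Lemma~\ref{lem:poincare}, by comparing $\mathbb{R}^n\setminus A_k$ with the exterior of a ball about $x$ of volume $2|A_k|$. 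Writing $b_k=2^{kp_s^\star}|A_k|$, $d_k=|u_k|_{\wspr}^p$ and using the elementary identity $p_s^\star-p=p_s^\star\cdot\tfrac{sp}{n}$, these two ingredients give $b_{k+1}\le C\,b_k^{sp/n}\,d_k$; summing over $k$, applying H\"older with exponents $\tfrac{n}{sp}$ and $\tfrac{n}{n-sp}$ and then $\|\cdot\|_{\ell^{n/(n-sp)}}\le\|\cdot\|_{\ell^1}$, one reaches $S\le C\,S^{sp/n}\,|u|_{\wspr}^p$ with $S=\sum_k b_k$. Since $S<\infty$ (as $u$ has compact support) and $sp/n<1$, this gives $S\le C\,|u|_{\wspr}^{p_s^\star}$, and $\|u\|_{L^{p_s^\star}(\mathbb{R}^n)}^{p_s^\star}\le C\,S$ finishes. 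For the intermediate exponents $q$ one then interpolates between $L^p(\mathbb{R}^n)$ and $L^{p_s^\star}(\mathbb{R}^n)$.

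Finally, for $sp=n$ and any $q<\infty$, I would pick $s'\in(0,s)$ close enough to $s$ that $p_{s'}^\star=\tfrac{np}{n-s'p}\ge q$ (possible since $p_{s'}^\star\to\infty$ as $s'\uparrow s$). Lemma~\ref{lem:inclu}, applied on $\mathbb{R}^n$, gives $W^{s,p}(\mathbb{R}^n)\hookrightarrow W^{s',p}(\mathbb{R}^n)$; since $s'p<sp=n$, the subcritical case just proved gives $W^{s',p}(\mathbb{R}^n)\hookrightarrow L^{p_{s'}^\star}(\mathbb{R}^n)$; and interpolating this with $W^{s,p}(\mathbb{R}^n)\hookrightarrow L^p(\mathbb{R}^n)$ on the range $[p,p_{s'}^\star]$ yields $W^{s,p}(\mathbb{R}^n)\hookrightarrow L^q(\mathbb{R}^n)$.
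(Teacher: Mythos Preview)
The paper does not prove this theorem at all; it simply cites \cite[Theorem~4.47]{DD}. Your proposal therefore supplies what the paper omits, and the arguments you outline are the standard ones: Campanato-type oscillation estimates via ball averages for the Morrey regime, the dyadic truncation method (level sets $A_k=\{u>2^k\}$, the Poincar\'e-type lower bound on each slice, and the H\"older/$\ell^p$ bookkeeping leading to $S\le C\,S^{sp/n}|u|_{\wspr}^p$) for the critical exponent when $sp<n$, and reduction to a slightly smaller $s'$ via Lemma~\ref{lem:inclu} for the borderline case. Each step is correct as sketched.

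One remark worth recording: your interpolation step yields the embeddings only for $q\in[p,p_s^\star]$ (respectively $q\in[p,\infty)$ when $sp=n$), not for $q\in[1,p)$ as the statement asserts. This is not a defect of your argument but of the statement: on~$\mathbb{R}^n$ the embedding $W^{s,p}(\mathbb{R}^n)\hookrightarrow L^q(\mathbb{R}^n)$ is \emph{false} for $q<p$, as the rescaling $u_R(x)=\phi(x/R)$ with $\phi\in C_0^\infty(\mathbb{R}^n)$ shows (one has $\|u_R\|_{L^q}/\|u_R\|_{W^{s,p}(\mathbb{R}^n)}\sim R^{n/q-n/p}\to\infty$). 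So the range in the theorem should read $p\le q\le p_s^\star$, and your proof covers exactly the correct range.
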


	\begin{re} Note that $p_s^\star,$ as a function of $s,$ 
		is continuous in $(0,1]$ where $p_1^\star$ is the critical
		Sobolev exponent, i.e.  
		\[
			p_1^\star\coloneqq
				\begin{cases}
					 \dfrac{np}{n-p} &\text{ if } p<n,\\
					 \infty  &\text{ if } p\ge n,\\
				\end{cases}
		\]	
	\end{re}

	A proof of the next theorem can be found in \cite[Theorem 1]{Ms}.

	\begin{teo}\label{teo:mazya}
		Let $s\in(0,1),$ $p\in[1,\infty)$ and $sp<n.$ Then 
		there exists a constant $C=C(n,p)$ such that
		\[
			\|u\|_{L^{p_s^\star}(\R^n)}^p\le C\dfrac{s(1-s)}{(n-sp)^{p-1}}
			|u|_{\wspr}^p
		\]
		for all $u\in\wspr.$	
	\end{teo}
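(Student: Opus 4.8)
\emph{Proof outline.} Since $\bigl\|\,|u|\,\bigr\|_{L^{p_s^\star}(\R^n)}=\|u\|_{L^{p_s^\star}(\R^n)}$ and $\bigl|\,|u|(x)-|u|(y)\,\bigr|\le|u(x)-u(y)|$, it suffices to prove the inequality for nonnegative $u\in\wspr$ (for which all level sets $\{u>t\}$, $t>0$, have finite measure). The plan is to settle the case $p=1$ by a coarea--isoperimetric argument and then bootstrap to $p\in(1,\infty)$.

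\emph{The case $p=1$.} Here $p_s^\star=n/(n-s)$, so the exponent of $n-sp$ is zero and the target reads $\|u\|_{L^{n/(n-s)}(\R^n)}\le C(n)\,s(1-s)\,|u|_{\wspr}$. For $E\subset\R^n$ of finite measure set $P_s(E):=\int_E\int_{\R^n\setminus E}|x-y|^{-n-s}\,dx\,dy$. First I would invoke the fractional coarea formula $|u|_{\wspr}=2\int_0^\infty P_s(\{u>t\})\,dt$, obtained by inserting $|u(x)-u(y)|=\int_0^\infty|\chi_{\{u>t\}}(x)-\chi_{\{u>t\}}(y)|\,dt$ and Fubini. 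Next, the fractional isoperimetric inequality $P_s(E)\ge P_s(B)$, with $B$ the centred ball of volume $|E|$, combined with the scaling $P_s(\lambda E)=\lambda^{\,n-s}P_s(E)$, yields $P_s(E)\ge c(n,s)\,|E|^{(n-s)/n}$ with $c(n,s)=P_s(B_1)\,|B_1|^{-(n-s)/n}$. The crucial quantitative input is the two--sided estimate $P_s(B_1)\approx_n 1/(s(1-s))$: a contribution of order $1/s$ to the lower bound comes from the tail $\{|y|\ge2\}$ of the integral, a contribution of order $1/(1-s)$ from a boundary layer of $B_1$ (the Bourgain--Brezis--Mironescu mechanism), while on compact subsets of $(0,1)$ the quantity $P_s(B_1)$ is finite and positive. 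Finally, from $u(x)=\int_0^\infty\chi_{\{u>t\}}(x)\,dt$ and Minkowski's integral inequality, $\|u\|_{L^{n/(n-s)}(\R^n)}\le\int_0^\infty|\{u>t\}|^{(n-s)/n}\,dt$. Chaining the three facts proves the case $p=1$.

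\emph{The case $p\in(1,\infty)$.} I would decompose $u=\sum_{j\in\mathbb{Z}}u_j$ with $u_j:=(u\wedge2^{j})-(u\wedge2^{j-1})\in[0,2^{j-1}]$. For each fixed pair $x,y$ the nonnegative numbers $u_j(x)-u_j(y)$ are the successive increments of $t\mapsto(u(x)\wedge t)-(u(y)\wedge t)$ and hence sum to $|u(x)-u(y)|$; therefore $\sum_j|u_j(x)-u_j(y)|^p\le\bigl(\sum_j|u_j(x)-u_j(y)|\bigr)^p=|u(x)-u(y)|^p$, and integrating against $|x-y|^{-n-sp}\,dx\,dy$ gives the quasi--orthogonality $\sum_j|u_j|_{\wspr}^p\le|u|_{\wspr}^p$. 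Since $\|u\|_{L^{p_s^\star}(\R^n)}^{p_s^\star}\le2^{p_s^\star}\sum_j2^{jp_s^\star}|\{u>2^j\}|$ and $p/p_s^\star=(n-sp)/n\le1$, raising to the power $p/p_s^\star$ gives $\|u\|_{L^{p_s^\star}(\R^n)}^p\le2^p\sum_j2^{jp}|\{u>2^j\}|^{(n-sp)/n}$. Hence everything reduces to the single--level estimate
\[
2^{jp}\,|\{u>2^j\}|^{(n-sp)/n}\ \le\ C(n,p)\,\frac{s(1-s)}{(n-sp)^{p-1}}\,|u_j|_{\wspr}^p ,
\]
which, putting $h:=u_j/2^{j-1}\in[0,1]$ (so $h\equiv1$ on $\{u\ge2^j\}$) and using the homogeneity $|h(\lambda\,\cdot)|_{\wspr}^p=\lambda^{\,n-sp}|h|_{\wspr}^p$ to normalize $|\{h=1\}|$, amounts to the fractional capacity inequality $|h|_{\wspr}^p\ge c(n,p)\,(n-sp)^{p-1}/(s(1-s))$ for every measurable $h\colon\R^n\to[0,1]$ with $|\{h=1\}|=1$ — which in turn I would prove by the same coarea--isoperimetric mechanism, now within the single range of values $[0,1]$. (An alternative is to bypass the truncation and reduce directly to $p=1$ through $v=u^{\beta}$ with $\beta=(n-s)p/(n-sp)$, applying the case $p=1$ to $v$ and estimating $|u^{\beta}|_{W^{s,1}(\R^n)}\le\beta\,\|u\|_{L^{p_s^\star}(\R^n)}^{\beta-1}|u|_{\wspr}$; either way the factor $(n-sp)^{p-1}$ is earned from the critical exponent $p_s^\star=np/(n-sp)$, via $p/p_s^\star$ in the summation or via $\beta\approx(n-sp)^{-1}$.)

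\emph{Main obstacle.} The whole difficulty is the bookkeeping: unlike in the classical proofs, every constant must be kept explicit in $s$, $1-s$ and $n-sp$. The cornerstone is the fractional isoperimetric inequality with the sharp dependence $P_s(B_1)\approx_n1/(s(1-s))$, which encodes the two degenerations $s\to0^+$ and $s\to1^-$; the single--level (respectively power--substitution) step must reproduce the factor $(n-sp)^{p-1}$ with no loss, which is the delicate point as $sp\uparrow n$ — and in the $v=u^{\beta}$ route one must moreover tame the slowly decaying tail of the kernel $|x-y|^{-n-s}$ by passing to a localized form of the Gagliardo seminorm.
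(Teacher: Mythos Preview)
The paper does not prove this theorem; it merely records ``A proof of the next theorem can be found in \cite[Theorem 1]{Ms}'' (Maz'ya--Shaposhnikova). So there is no in-paper argument to compare against, and your outline is effectively a sketch of the argument in that cited reference.

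Your $p=1$ case is correct: the fractional coarea formula plus the fractional isoperimetric inequality, together with the sharp two-sided bound $P_s(B_1)\approx_n 1/\bigl(s(1-s)\bigr)$, gives the stated constant. For $p>1$, the dyadic truncation and the pointwise quasi-orthogonality $\sum_j|u_j(x)-u_j(y)|^p\le|u(x)-u(y)|^p$ are also correct and are exactly Maz'ya's truncation method.

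The genuine gap is the single-level step. You reduce to the capacity bound $|h|_{\wspr}^p\ge c(n,p)(n-sp)^{p-1}/\bigl(s(1-s)\bigr)$ for $h\colon\R^n\to[0,1]$ with $|\{h=1\}|=1$, and propose to prove it ``by the same coarea--isoperimetric mechanism''. This cannot work once $sp\ge1$: applying coarea to $h$ with the kernel $|x-y|^{-n-sp}$ produces the quantities $\int_{\{h>t\}}\int_{\{h\le t\}}|x-y|^{-n-sp}\,dx\,dy$, which are \emph{infinite} whenever $\{h>t\}$ has nonempty (even smooth) boundary, so no isoperimetric comparison is available. The obvious substitute---using only the jump from $\{h=1\}$ to $\{h\le\nicefrac12\}$ together with $\int_{\R^n\setminus E}|x-y|^{-n-sp}\,dy\ge c(n)(sp)^{-1}|E|^{-sp/n}$---gives a capacity lower bound of order $1/(sp)$, which misses the factor $1/(1-s)$ entirely and therefore does not recover the classical Sobolev inequality as $s\to1^-$. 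Your $v=u^\beta$ alternative hits the analogous wall (the H\"older step leaves the divergent $\int\!\!\int|x-y|^{-n}\,dx\,dy$), as you yourself flag. Producing the $(1-s)$ factor for $p>1$ is precisely the content of the Maz'ya--Shaposhnikova argument, and it requires a more careful treatment of the interaction between \emph{consecutive} dyadic level sets than a per-level capacity bound; your outline identifies the obstacle correctly but does not overcome it.
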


	By Theorem \ref{teo:incluar} and Theorem \ref{teo:contincr}, we have 
	the next result.

	\begin{co}\label{co:incont}
		Let $s\in(0,1),$  $p\in(1,\infty)$ and 
		$\Omega\subset\mathbb{R}^n$
		be an open set with Lipschitz boundary. The conclusions of
		Theorem \ref{teo:contincr} remain true if $\mathbb{R}^n$ is 
		replaced by $\Omega.$ 
	\end{co}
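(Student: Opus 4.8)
The plan is simply to compose the extension theorem with the whole-space embeddings. Since $\Omega$ has Lipschitz boundary, Theorem~\ref{teo:incluar} provides a bounded linear extension operator $E\colon\wsp\to\wspr$, so that $Eu=u$ a.e.\ on $\Omega$ and $\|Eu\|_{\wspr}\le C_1\|u\|_{\wsp}$ for every $u\in\wsp$, with $C_1=C_1(n,s,p)$. I would state this as the first step.

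Given $u\in\wsp$, I would then feed $Eu\in\wspr$ into Theorem~\ref{teo:contincr}. In the subcritical case $sp<n$, that theorem yields $Eu\in L^q(\R^n)$ with $\|Eu\|_{L^q(\R^n)}\le C_2\|Eu\|_{\wspr}$ for all $1\le q\le p_s^\star$; since $u=Eu$ a.e.\ on $\Omega$, we obtain $\|u\|_{L^q(\Omega)}\le\|Eu\|_{L^q(\R^n)}\le C_1C_2\|u\|_{\wsp}$, which is precisely the embedding $\wsp\hookrightarrow L^q(\Omega)$. The borderline case $sp=n$ is handled in exactly the same way, now for every $1\le q<\infty$.

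In the case $sp>n$, Theorem~\ref{teo:contincr} gives $Eu\in C^{0,\beta}_b(\R^n)$ with $\beta=s-\nicefrac np$ together with the corresponding norm estimate in terms of $\|Eu\|_{\wspr}$; restricting $Eu$ to $\Omega$ keeps it bounded and does not enlarge its $\beta$-H\"older seminorm, because the supremum of $|Eu(x)-Eu(y)|/|x-y|^{\beta}$ over $x,y\in\Omega$ is dominated by the supremum over $x,y\in\R^n$. Hence $u\in C^{0,\beta}_b(\Omega)$ with norm controlled by $C_1\|u\|_{\wsp}$, as claimed.

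I do not expect any genuine difficulty here: the corollary is a direct consequence of stringing together Theorems~\ref{teo:incluar} and~\ref{teo:contincr}, and the only point deserving a word of care is the trivial observation that passing from $\R^n$ to $\Omega$ by restriction never increases the relevant $L^q$, sup, or H\"older norms.
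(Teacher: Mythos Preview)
Your proposal is correct and matches the paper's approach exactly: the paper simply states that the corollary follows ``By Theorem~\ref{teo:incluar} and Theorem~\ref{teo:contincr}'' without further detail, and what you have written is precisely the fleshing-out of that sentence.
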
 

	The following embedding theorem is established in 
	\cite[Theorem 4.58]{DD}. See also \cite{Adams}. 

	\begin{teo}\label{teo:compinc} 
		Let $\Omega\subset\mathbb{R}^n$ be
		a bounded open set with Lipschitz boundary, $s\in(0,1)$ and 
		$p\in[1,\infty).$ Then we have the following compact embeddings:
		\begin{align*}
			&\wsp\hookrightarrow L^q(\Omega) 
			\qquad \mbox{ for all } q\in[1,p_s^\star), 
			&\mbox{ if } sp\le n;\\
			&\wsp\hookrightarrow C^{0,\lambda}_b(\Omega) 
			\quad \mbox{ for all } 
			\lambda<s-\nicefrac{n}{p}, &\mbox{ if } sp>n.
		\end{align*}
	\end{teo}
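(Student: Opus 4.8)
The plan is to deduce both compact embeddings from the continuous ones already recorded in Corollary~\ref{co:incont}, via the classical mollification-plus-interpolation scheme: it suffices to show that an arbitrary bounded sequence $(u_k)$ in $\wsp$ admits a subsequence converging in the target space, and the core case is $q=p$ (respectively, convergence in $C^0_b(\Omega)$), everything else being interpolation.

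First I would record the key local estimate in the case $sp\le n$. Fix an open set $\Omega'$ with compact closure inside $\Omega$, set $d\coloneqq\dist(\Omega',\partial\Omega)>0$, let $\rho_\varepsilon$ be a standard mollifier with $0<\varepsilon<d$, and put $u_\varepsilon\coloneqq u*\rho_\varepsilon$ on $\Omega'$. From $u_\varepsilon(x)-u(x)=\int\rho_\varepsilon(y)\bigl(u(x-y)-u(x)\bigr)\,dy$, Jensen's inequality and Fubini give $\|u_\varepsilon-u\|_{L^p(\Omega')}^p\le\int\rho_\varepsilon(y)\,\|u(\cdot-y)-u\|_{L^p(\Omega')}^p\,dy$; then, using $\rho_\varepsilon\le C\varepsilon^{-n}\chi_{\{|y|<\varepsilon\}}$, the bound $|y|^{n+sp}\le\varepsilon^{n+sp}$, Fubini, and the change of variables $z=x-y$ (legitimate because $B_\varepsilon(x)\subset\Omega$ for $x\in\Omega'$),
\[
\int_{|y|<\varepsilon}\|u(\cdot-y)-u\|_{L^p(\Omega')}^p\,dy
\le\varepsilon^{n+sp}\!\int_{\Omega'}\!\int_{|y|<\varepsilon}\frac{|u(x-y)-u(x)|^p}{|y|^{n+sp}}\,dy\,dx
\le\varepsilon^{n+sp}\,|u|_{\wsp}^p ,
\]
so $\|u_\varepsilon-u\|_{L^p(\Omega')}^p\le C(n)\,\varepsilon^{sp}\,|u|_{\wsp}^p$. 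This estimate, which uses only the Gagliardo seminorm over $\Omega$ itself, is the step I expect to be the main obstacle.

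Next, for each fixed small $\varepsilon$ the family $\{(u_k)_\varepsilon\}_k$ is uniformly bounded and uniformly Lipschitz on $\overline{\Omega'}$, with constants depending only on $\varepsilon$ and $\sup_k\|u_k\|_{\lp}$, so by Arzel\`a--Ascoli it is precompact in $C(\overline{\Omega'})$ and hence in $L^p(\Omega')$; combined with $\|(u_k)_\varepsilon-u_k\|_{L^p(\Omega')}\le C\,\varepsilon^{s}\sup_k\|u_k\|_{\wsp}$ and a routine total-boundedness argument (make the mollification error $<\eta/2$, then take a finite $\eta/2$-net of $\{(u_k)_\varepsilon\}$), this shows $(u_k)$ is precompact in $L^p(\Omega')$. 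I would then globalize along the exhaustion $\Omega_j\coloneqq\{x\in\Omega:\dist(x,\partial\Omega)>1/j\}$: when $sp<n$, H\"older's inequality and Corollary~\ref{co:incont} give $\int_{\Omega\setminus\Omega_j}|u_k|^p\,dx\le|\Omega\setminus\Omega_j|^{sp/n}\|u_k\|_{L^{p_s^\star}(\Omega)}^p\to0$ uniformly in $k$ as $j\to\infty$ (for $sp=n$, replace $p_s^\star$ by any finite $r>p$), and a diagonal extraction over $j$ produces a subsequence that is Cauchy, hence convergent, in $\lp$.

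Finally, interpolation disposes of the remaining exponents: for $q\in[1,p)$ the inclusion $L^p(\Omega)\hookrightarrow L^q(\Omega)$ is bounded, while for $q\in[p,p_s^\star)$ (respectively $q\in[p,\infty)$ when $sp=n$) the bound $\|v\|_{L^q(\Omega)}\le\|v\|_{L^p(\Omega)}^{\theta}\|v\|_{L^{p_s^\star}(\Omega)}^{1-\theta}$ together with the $L^{p_s^\star}$-bound of Corollary~\ref{co:incont} turns the $L^p$-convergent subsequence into an $L^q$-convergent one, proving the first compact embedding. For $sp>n$, Corollary~\ref{co:incont} gives a bounded embedding $\wsp\hookrightarrow C^{0,\beta}_b(\Omega)$ with $\beta=s-\nicefrac{n}{p}$; a bounded sequence there is precompact in $C^0_b(\Omega)$ by Arzel\`a--Ascoli, and for $\lambda<\beta$ the elementary H\"older-seminorm interpolation $[v]_{C^{0,\lambda}}\le(2\|v\|_{\infty})^{1-\lambda/\beta}\,[v]_{C^{0,\beta}}^{\lambda/\beta}$ promotes convergence in $C^0_b(\Omega)$ (with a uniform $C^{0,\beta}$-bound) to convergence in $C^{0,\lambda}_b(\Omega)$. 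Besides the local estimate, the only point requiring care is the uniform control in the boundary layer; notably neither step needs an extension of functions outside $\Omega$, so the argument rests solely on the continuous embeddings already stated.
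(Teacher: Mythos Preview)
The paper does not supply its own proof of this theorem: it is quoted verbatim from \cite[Theorem 4.58]{DD} (with a pointer to \cite{Adams}) and used as a black box throughout. So there is no ``paper's proof'' to compare your argument against.

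On its own merits, your argument is correct and follows the standard Fr\'echet--Kolmogorov/mollification route. The key local estimate $\|u_\varepsilon-u\|_{L^p(\Omega')}^p\le C(n)\,\varepsilon^{sp}\,|u|_{\wsp}^p$ is right, the Arzel\`a--Ascoli step for the mollified family is fine, and your boundary-layer control via H\"older against the $L^{p_s^\star}$ (or, when $sp=n$, any $L^r$ with $r>p$) bound from Corollary~\ref{co:incont} is exactly what makes the exhaustion/diagonal argument close. The interpolation steps in both the Lebesgue and H\"older regimes are routine and correctly stated. One small remark: your closing comment that ``neither step needs an extension of functions outside $\Omega$'' is true of \emph{your} argument, but the continuous embeddings you invoke from Corollary~\ref{co:incont} themselves rest on Theorem~\ref{teo:incluar}, which is an extension result; so the Lipschitz-boundary hypothesis is still doing work, just upstream of where you use it.
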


	\begin{re} 
		Let $\Omega\subset\mathbb{R}^n$
		a bounded domain with Lipschitz boundary. 
		By the above theorem, we have that the 
		embedding of $\wsp$ into $L^p(\Omega)$ is compact for every
		$s\in(0,1)$ and for every $p\in(1,\infty).$
	\end{re}

	The next results are proven in  \cite[Corollaries 2 and 7]{bourgain}.

	\begin{teo}
		\label{teo:bbm1}	
		Let $\Omega$ be a smooth bounded domain in $\R^n,$ and 
		$p\in(1,\infty).$ Assume $u\in\lp,$ then
		\[
			\lim_{s\to 1^-}\K(1-s)|u|_{\wsp}^p=|u|_{\wup}^p 
		\]
		with
		\[
			|u|_{\wup}^p =\begin{cases}
			\displaystyle\int_{\Omega}|\nabla u|^p \, dx 
				&\text{ if } u\in\wup,\\
				\infty &\text{ if } u\notin\wup.
			\end{cases}	
		\]
		Here $\K$ depends only the $p$ and $n.$ 	
	\end{teo}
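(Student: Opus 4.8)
This is the Bourgain--Brezis--Mironescu characterisation of $\wup$, so the quickest route is simply to invoke \cite[Corollaries 2 and 7]{bourgain}; let me nonetheless indicate how the formula is obtained. Put $C_{n,p}\coloneqq\int_{S^{n-1}}|\omega\cdot e|^p\,d\sigma(\omega)$, which by rotational invariance does not depend on the unit vector $e$, and let $\K=\K(n,p)$ be normalised by $\K\,C_{n,p}=p$, so that the stated limit equals $|u|_{\wup}^p$. The plan has three steps: (i) prove the formula for $u\in C^2(\overline\Omega)$ by a Taylor expansion near the diagonal; (ii) pass to arbitrary $u\in\wup$ by density; (iii) treat $u\in\lp\setminus\wup$, where the limit must be $+\infty$.

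\textbf{Step (i).} Fix $\delta\in(0,1)$ and split the domain of integration of $|u|_{\wsp}^p$ into $\{(x,y)\in\Omega\times\Omega:|x-y|<\delta\}$ and its complement. On the complement, $|u(x)-u(y)|^p\le 2^{p-1}(|u(x)|^p+|u(y)|^p)$ yields a bound $\le C(n,p)\|u\|_{\lp}^p\,\delta^{-sp}/(sp)$, which vanishes once multiplied by $(1-s)$. On $\{|x-y|<\delta\}$ write $u(x)-u(y)=\nabla u(y)\cdot(x-y)+R(x,y)$ with $|R(x,y)|\le\tfrac12\|D^2u\|_{\infty}|x-y|^2$; the contribution of the remainder is $O(\delta^{p+1-sp})$, again negligible after multiplication by $(1-s)$. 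For the principal term one uses polar coordinates $x=y+r\omega$ and the identity $\int_0^\delta r^{p-1-sp}\,dr=\delta^{p(1-s)}/(p(1-s))$; outside an $O(\delta)$-neighbourhood of $\partial\Omega$ the inner integral in $x$ runs over all of $B_\delta(y)$, and $\int_{S^{n-1}}|\nabla u(y)\cdot\omega|^p\,d\sigma=C_{n,p}|\nabla u(y)|^p$. Multiplying by $\K(1-s)$, letting first $s\to1^-$ (so $\delta^{p(1-s)}\to1$) and then $\delta\to0^+$ (so the boundary layer disappears), a $\limsup/\liminf$ sandwich gives $\K(1-s)|u|_{\wsp}^p\to\tfrac{\K C_{n,p}}{p}\int_\Omega|\nabla u|^p=|u|_{\wup}^p$.

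\textbf{Step (ii).} The point here is a uniform bound $(1-s)|v|_{\wsp}^p\le C\,\|v\|_{\wup}^p$ for all $v\in\wup$ and all $s\in[1/2,1)$, with $C=C(n,p,\Omega)$. To prove it I would extend $v$ to $\bar v\in\wspr$ (using that $\Omega$ is a $W^{1,p}$-extension domain), use $|v|_{\wsp}^p\le|\bar v|_{\wspr}^p$, split at $|x-y|=1$, and on $\{|x-y|<1\}$ estimate $|\bar v(x)-\bar v(y)|^p\le|x-y|^p\int_0^1|\nabla\bar v(y+t(x-y))|^p\,dt$ together with Fubini and $\int_0^1 r^{p-sp-1}\,dr=1/(p(1-s))$; this is the $s$-explicit form of \cite[Proposition 2.1]{DNPV}, in the spirit of Lemma~\ref{lem:inclu}. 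Now take $u_k\in C^\infty(\overline\Omega)$ with $u_k\to u$ in $\wup$. By the triangle inequality for the $\wsp$-seminorm and the uniform bound applied to $u-u_k$,
\[
\bigl|(1-s)^{1/p}|u|_{\wsp}-(1-s)^{1/p}|u_k|_{\wsp}\bigr|\le (1-s)^{1/p}|u-u_k|_{\wsp}\le C^{1/p}\|u-u_k\|_{\wup},
\]
while by Step (i) the middle expression tends, as $s\to1^-$, to $(C_{n,p}/p)^{1/p}\|\nabla u_k\|_{\lp}$, which converges to $(C_{n,p}/p)^{1/p}\|\nabla u\|_{\lp}$. Letting $s\to1^-$ and then $k\to\infty$ gives the formula for every $u\in\wup$.

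\textbf{Step (iii).} Assume $u\in\lp$ and $\liminf_{s\to1^-}(1-s)|u|_{\wsp}^p<\infty$; it suffices to show $u\in\wup$, for then Step (ii) forces the limit to be finite, contradicting $u\notin\wup$. This is the delicate point, and the one I expect to be the main obstacle to a self-contained argument: the naive estimate, bounding the $L^p$-modulus of continuity of $u$ by a single dyadic shell of the seminorm, loses exactly a factor $(1-s)^{-1}$ and is useless. The correct argument, which is \cite[Corollary 7]{bourgain}, mollifies $u$ and shows that $\|\nabla(u*\varphi_\delta)\|_{\lp}$ stays bounded under a suitable coupling of $\delta\to0$ with $s\to1$, whence $u\in\wup$ by weak compactness in $\lp$ (here $1<p<\infty$ is used); in the paper this converse is taken verbatim from \cite{bourgain}.
\qed
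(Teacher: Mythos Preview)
The paper does not prove this theorem at all: it is stated without proof, with the reference ``proven in \cite[Corollaries 2 and 7]{bourgain}'' given just before the statement. Your proposal correctly identifies this and cites the same source; the three-step sketch you add (smooth case by Taylor expansion in polar coordinates, density via the uniform bound $(1-s)|v|_{\wsp}^p\le C\|v\|_{\wup}^p$, and the converse compactness argument for $u\notin\wup$) is the standard Bourgain--Brezis--Mironescu outline and goes well beyond what the paper itself contains.
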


	\begin{re}\label{re:sopcomp} 
		Let $\Omega$ be a smooth bounded domain in $\R^n,$  
		$p\in(1,\infty)$ and $\phi\in C_0^\infty(\Omega).$ Then
		\begin{align*}
			|\phi|_{\wsp}^p&\le|\phi|_{\wspr}^p
			=|\phi|_{\wsp}^p + 
			2\int_{\Omega}\int_{\R^n\setminus\Omega}
			\dfrac{|\phi(x)|^p}{|x-y|^{n+sp}} 
			\, dx\\
			&\le 
			|\phi|_{\wsp}^p +
			\dfrac{C}{sp}
			\dfrac{\|\phi\|_{\lp}^p}{\dist(K,\partial\Omega)^{sp}},
		\end{align*}
		where $K$ is the support of $\phi$ and $C$ depends only of $n.$ 
		Then by Theorem \ref{teo:bbm1} we have
		\[
			\lim_{s\to 1^-}\K(1-s)|\phi|_{\wspr}^p=|\phi|_{\wup}^p. 
		\]
	\end{re}

	\begin{teo}
		\label{teo:bbm2}	
		Let $\Omega$ be a smooth bounded domain in $\R^n,$  
		$p\in(1,\infty)$ and $u_s\in\wsp$ for $s\in(0,1).$ 
		Assume that
		\[
			\int_{\Omega}u_s dx=0 \quad \text{ and }
			\quad (1-s)|u_s|_{\wsp}^p\le C
		\]
		for all $s\in(0,1).$ Then, there exists $u\in\wup$ and a subsequence 
		$\{u_{s_k}\}_{k\in\N}$ such that $s_k\to 1^-$ as 
		$k\to \infty,$
		\begin{align*}
			u_{s_k}\to u &\quad\mbox{strongly in } \lp,\\
			u_{s_k}\rightharpoonup u&\quad\mbox{weakly in }
			 W^{1-\varepsilon,p}(\Omega),
		\end{align*}
		for all $\varepsilon> 0.$ 
	\end{teo}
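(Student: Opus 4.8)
The plan is to run the Bourgain--Brezis--Mironescu compactness scheme for the family $\{u_s\}$, keeping careful track of the dependence on $s$ of the Gagliardo seminorms. \emph{Step 1} is a uniform $L^p$ bound, $\|u_s\|_{L^p(\Omega)}\le C'$ with $C'$ independent of $s$. Since $\Omega$ is a (connected) smooth bounded domain and $\int_\Omega u_s\,dx=0$, this follows from the fractional Poincar\'e--Wirtinger inequality with the scale-correct constant,
\[
\|w-\bar w\|_{L^p(\Omega)}^p\le c_0^{-1}(1-s)\,|w|_{\wsp}^p\qquad\text{for all }w\in\wsp,\ s\in(0,1),
\]
with $c_0=c_0(n,p,\Omega)>0$ ($\bar w$ the average of $w$ over $\Omega$): for $s$ bounded away from $1$ this is the usual fractional Poincar\'e inequality, the factor $(1-s)$ being then bounded below, while as $s\to 1^-$ it degenerates to the classical Poincar\'e--Wirtinger inequality by the limit in Theorem \ref{teo:bbm1}; such an inequality is among the estimates of \cite{bourgain}. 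Applying it to $w=u_s$ and using the hypothesis $(1-s)|u_s|_{\wsp}^p\le C$ gives $\|u_s\|_{L^p(\Omega)}^p\le c_0^{-1}C$.

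\emph{Step 2} produces uniform fractional bounds and weak compactness. Fix $\varepsilon\in(0,1)$ and consider $s\in(1-\varepsilon,1)$. The crucial point is that $|u_s|_{W^{1-\varepsilon,p}(\Omega)}^p\le C_\varepsilon$ uniformly in such $s$; this rests on the $s$-weighted refinement of Lemma \ref{lem:inclu},
\[
(1-\sigma)\,|w|_{W^{\sigma,p}(\Omega)}^p\le C(n,p)\bigl[(1-s)\,|w|_{\wsp}^p+\|w\|_{L^p(\Omega)}^p\bigr],\qquad 0<\sigma<s<1,
\]
again of the type established in \cite{bourgain} (split the Gagliardo integral at $|x-y|=1$, bound $|x-y|^{(s-\sigma)p}\le 1$ on the near region, and use a layer-cake estimate in the variable $|x-y|$ to retain the $(1-s)$ weight). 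Taking $\sigma=1-\varepsilon$ and inserting Step 1 and the hypothesis gives $|u_s|_{W^{1-\varepsilon,p}(\Omega)}^p\le C(n,p)\varepsilon^{-1}(1+c_0^{-1})C=:C_\varepsilon$. As $W^{1-\varepsilon,p}(\Omega)$ is reflexive ($1<p<\infty$), a fixed sequence $s_k\to 1^-$ has a subsequence (for $k$ large, so $s_k\in(1-\varepsilon,1)$) along which $u_{s_k}\rightharpoonup u$ weakly in $W^{1-\varepsilon,p}(\Omega)$; by the compact embedding $W^{1-\varepsilon,p}(\Omega)\hookrightarrow L^p(\Omega)$ (Remark following Theorem \ref{teo:compinc}), a further subsequence satisfies $u_{s_k}\to u$ strongly in $L^p(\Omega)$.

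\emph{Step 3} is a diagonalization and the identification of the limit. Applying Step 2 with $\varepsilon_m=1/m$ and extracting diagonally, one gets a single subsequence, still denoted $\{u_{s_k}\}$, that converges strongly in $L^p(\Omega)$ to some $u$ and weakly in $W^{1-\varepsilon_m,p}(\Omega)$ to $u$ for every $m$ (the weak $W^{1-\varepsilon_m,p}$-limit coinciding with the $L^p$-limit by uniqueness). For an arbitrary $\varepsilon>0$, pick $m$ with $\varepsilon_m\le\varepsilon$; the sequence is then bounded in $W^{1-\varepsilon_m,p}(\Omega)\hookrightarrow W^{1-\varepsilon,p}(\Omega)$ (Remark after Lemma \ref{lem:inclu}), hence in $W^{1-\varepsilon,p}(\Omega)$, and a routine subsequence argument yields $u_{s_k}\rightharpoonup u$ weakly in $W^{1-\varepsilon,p}(\Omega)$. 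Finally, since $u_{s_k}\to u$ in $L^p(\Omega)$ and $\sup_k\K(1-s_k)|u_{s_k}|_{W^{s_k,p}(\Omega)}^p\le\K C<\infty$, the lower-semicontinuity companion of Theorem \ref{teo:bbm1} (also in \cite{bourgain}) forces $u\in\wup$, with $|u|_{\wup}^p\le\liminf_k\K(1-s_k)|u_{s_k}|_{W^{s_k,p}(\Omega)}^p$. I expect Step 2 to be the main obstacle: the two $s$-uniform quantitative inequalities it uses --- the weighted comparison of Gagliardo seminorms and the scale-correct Poincar\'e--Wirtinger inequality of Step 1 --- are precisely the delicate BBM-type inputs, whereas once they are granted the compact embedding into $L^p$ and the diagonal argument make the rest routine.
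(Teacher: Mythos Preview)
The paper does not supply its own proof of this theorem: it is quoted verbatim from \cite[Corollaries~2 and~7]{bourgain}, and the only related content the paper records is the weighted seminorm comparison in Remark~\ref{re:seminor(1-s)}. Your sketch is essentially the Bourgain--Brezis--Mironescu argument itself: the scale-correct Poincar\'e--Wirtinger inequality for the uniform $L^p$ bound, the $(1-s)$-weighted comparison of Gagliardo seminorms (exactly the estimate the paper isolates in Remark~\ref{re:seminor(1-s)}) for the uniform $W^{1-\varepsilon,p}$ bound, and then reflexivity, compact embedding, and a diagonal extraction. This is correct and coincides with the approach of the cited reference, so there is nothing to compare against a separate ``paper's proof''.
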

	
	Note that in the previous theorem, the assumption
	\[
			\int_{\Omega}u_s dx=0 \quad \forall s\in(0,1) 	
	\]
	can be replaced by $\{u_s\}_{s\in(0,1)}$ is bounded in $\lp.$
 
	\begin{re}
		\label{re:seminor(1-s)}	
		Let $0<s<s^\prime<1,$ and $1<p<\infty.$
		From the proof of the Lemma 2 and Corollary 7 in \cite{bourgain}, 
		it follows that
		\begin{equation}\label{eq:seminor}
			(1-s)|u|_{\wsp}^p\le
			2^{(1-s)p}\mbox{diam}(\Omega)^{(s^\prime-s)p}
			(1-s^\prime)|u|_{W^{s^\prime,p}(\R^n)}^p
		\end{equation}
		for all $u\in W^{s^\prime,p}(\R^n).$ Here
		$\mbox{diam}(\Omega)$ denotes the diameter of $\Omega.$ 
		See also \cite[Remark 6]{bourgain}. 
	
		Observe also that for any $u=\phi\in C_0^\infty(\Omega)$ 
		passing to the limit
		in \eqref{eq:seminor} as $s^\prime\to1$ and 
		using Theorem \ref{teo:bbm1},
		we get
		\[
			(1-s)|\phi|_{\wsp}^p\le\dfrac{2^{(1-s)p}
			\mbox{diam}(\Omega)^{(1-s)p}}{\K} |\phi|_{\wup}^p,
		\] 
		that is
		\[
			\K(1-s)|\phi|_{\wsp}^p\le 2^{(1-s)p}
			\mbox{diam}(\Omega)^{(1-s)p}|\phi|_{\wup}^p.
		\] 	
	\end{re}

	\begin{re}\label{re:mazya}
		Let $s_0\in(0,\min\{\nicefrac{n}{p},s\}),$ 
		$u\in \widetilde{W}^{s,p}(\Omega),$ $x\in\Omega$
		and  $B=B_{r}(x)$ with $r=\mbox{diam}(\Omega).$
		Then, by Theorem \ref{teo:mazya}, 
		there exists a constant $C= C(n,p)$ such that
		\begin{align*}
			&\|u\|_{L^{p_{s_0}^\star}(\R^n)}^p
			\le C\dfrac{s_0(1-s_0)}{(n-s_0p)^{p-1}}
			|u|_{W ^{s_0,p}(\R^n)}^p\\
			&= \dfrac{Cs_0}{(n-s_0p)^{p-1}}
			\left((1-s_0)|u|_{W ^{s_0,p}(B)}^p + 
			2(1-s_0)\int_{\Omega} \int_{\R^n\setminus B}
			\dfrac{|u(x)|^p}{|x-y|^{n+s_0p}}
			dxdy\right).
		\end{align*}
		By Remark \ref{re:seminor(1-s)},
		\[
			(1-s_0)|u|_{W ^{s_0,p}(B)}^p
			\le2^{(1-s_0)p}(4\mbox{diam}(\Omega))^{(s-s_0)p}
			(1-s)|u|_{\wspr}^p.
		\]
		On the other hand
		\[
			2(1-s_0)\int_{\Omega} \int_{\R^n\setminus B}
			\dfrac{|u(x)|^p}{|x-y|^{n+s_0p}}
			dxdy\le \dfrac{2^{1-s_0p}\omega_n}{\mbox{diam}(\Omega)^{s_0p} 
			\,s_0p}
			\int_{\Omega}|u(x)|^p dx.
		\]
	Then there exists a constant $C=C(n,p)$ such that
		\begin{align*}
			&\|u\|_{L^{p_{s_0}^\star}(\R^n)}^p
			\le\\
			& \dfrac{C}{(n-s_0p)^{p-1} \mbox{diam}(\Omega)^{s_0p}}
			\left(\mbox{diam}(\Omega)^{sp}(1-s)|u|_{W ^{s,p}(\R^n)}^p + 
			\dfrac1{s_0p}\int_{\Omega}|u(x)|^p
			dx\right).
		\end{align*}
	\end{re}

	Our last result gives a characterization of $\wspc.$ For the proof we 
	refer the reader to \cite[Corollary 1.4.4.5]{Grisvard}.

	\begin{teo}\label{teo:teouce}
		Let $\Omega\subset\R^n$ be bounded open set 
		with Lipschitz boundary, 
		$s\in(0,1]$ and $p\in(1,\infty).$ 
		If $s\neq\frac{1}{p}$ then
		\[
			\wspc=\twsp,
		\]
		Furthermore, when $0<s<\nicefrac1p$ we have
		\[
			\wspc=\wsp.
		\]  
	\end{teo}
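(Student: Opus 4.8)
The plan is to reduce the statement to two classical facts about a Lipschitz domain: the fractional Hardy inequality, and an approximation argument by cut-off and mollification. For $s=1$ the statement reduces to the classical zero-trace characterization $\wupc=\{u\in W^{1,p}(\Omega):\widetilde u\in W^{1,p}(\R^n)\}$ valid on Lipschitz domains, so I assume $s\in(0,1)$; then the hypothesis $s\neq\nicefrac1p$ splits into the two cases $sp<1$ and $sp>1$. Throughout write $d(x)=\dist(x,\partial\Omega)$, and note that for $x\in\Omega$ one has $\int_{\R^n\setminus\Omega}|x-y|^{-n-sp}\,dy\le C\,d(x)^{-sp}$, because $B_{d(x)}(x)\subset\Omega$.

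The first point is the inclusion $\wspc\subseteq\twsp$. For $\phi\in C_0^\infty(\Omega)$,
\[
|\widetilde\phi|_{\wspr}^p=|\phi|_{\wsp}^p+2\int_\Omega\int_{\R^n\setminus\Omega}\frac{|\phi(x)|^p}{|x-y|^{n+sp}}\,dy\,dx\le|\phi|_{\wsp}^p+C\int_\Omega\frac{|\phi(x)|^p}{d(x)^{sp}}\,dx,
\]
so everything hinges on a Hardy-type bound for $\int_\Omega|\phi|^p\,d^{-sp}\,dx$. When $sp>1$ and $\Omega$ is Lipschitz, the fractional Hardy inequality gives $\int_\Omega|\phi|^p\,d^{-sp}\,dx\le C\,|\phi|_{\wsp}^p$ for every $\phi\in C_0^\infty(\Omega)$; hence $\phi\mapsto\widetilde\phi$ is bounded from $C_0^\infty(\Omega)$ (with the $\wsp$-norm) into $\wspr$, and therefore extends to a bounded linear map $\wspc\to\wspr$ that coincides with $u\mapsto\widetilde u$ (pass to the limit in $L^p$). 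Thus every $u\in\wspc$ has $\widetilde u\in\wspr$, i.e.\ $u\in\twsp$. When $sp<1$ the same Hardy inequality holds, with no boundary condition, for every $u\in\wsp$ (at the cost of an extra term $\|u\|_{\lp}^p$ on the right), so $\widetilde u\in\wspr$ for all $u\in\wsp$; with the trivial inclusion $\twsp\subseteq\wsp$ this already gives $\twsp=\wsp$.

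It then remains, in the case $0<s<\nicefrac1p$, to prove $\wspc=\wsp$, i.e.\ that $C_0^\infty(\Omega)$ is dense in $\wsp$; and in the case $\nicefrac1p<s<1$, to prove $\twsp\subseteq\wspc$. For the first, given $u\in\wsp$ choose cut-offs $\eta_\varepsilon\in C_0^\infty(\Omega)$ with $0\le\eta_\varepsilon\le1$, $\eta_\varepsilon\equiv1$ on $\{d>2\varepsilon\}$, $\supp\eta_\varepsilon\subset\{d>\varepsilon\}$, $|\nabla\eta_\varepsilon|\le C/\varepsilon$; splitting the difference quotient of $(1-\eta_\varepsilon)u$ as $(1-\eta_\varepsilon)(x)\bigl(u(x)-u(y)\bigr)+u(y)\bigl(\eta_\varepsilon(y)-\eta_\varepsilon(x)\bigr)$, the first summand contributes at most the integral of $|u(x)-u(y)|^p|x-y|^{-n-sp}$ over $\{d<2\varepsilon\}\times\Omega$, which vanishes as $\varepsilon\to0$ since $|u|_{\wsp}^p<\infty$, while the second contributes, via $|\eta_\varepsilon(x)-\eta_\varepsilon(y)|\le\min\{1,|x-y|/\varepsilon\}$, at most $C\int_{\{d<2\varepsilon\}}|u|^p\,d^{-sp}\,dx$, which vanishes by dominated convergence precisely because the $sp<1$ Hardy inequality makes $\int_\Omega|u|^p\,d^{-sp}\,dx$ finite. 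Hence $\eta_\varepsilon u\to u$ in $\wsp$, and mollifying the compactly supported $\eta_\varepsilon u$ inside $\wsp$ finishes the density, so $\wspc=\wsp=\twsp$ in this case.

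The remaining, and in my view hardest, step is $\twsp\subseteq\wspc$ for $\nicefrac1p<s<1$. Let $u\in\twsp$, so $\widetilde u\in\wspr$ with $\widetilde u=0$ a.e.\ outside $\Omega$; by Theorem \ref{teo:denso} pick $\psi_k\in C_0^\infty(\R^n)$ with $\psi_k\to\widetilde u$ in $\wspr$. As the $\psi_k$ need not vanish outside $\Omega$, one must correct the supports using the Lipschitz structure of $\partial\Omega$: with a finite partition of unity near the boundary and bi-Lipschitz charts flattening $\partial\Omega$ to a graph, translate $\widetilde u$ a small distance $\tau>0$ in the inward transverse direction on each chart so that the support of the translate lies at distance at least $c\tau$ from $\partial\Omega$ (with $c>0$ depending only on the Lipschitz constant), mollify at a scale much smaller than $\tau$ so the result stays compactly supported in $\Omega$, and glue the pieces; letting $\tau\to0$ and using continuity of translations in $\wspr$ together with boundedness of multiplication by smooth cut-offs yields $u_\tau\in C_0^\infty(\Omega)$ with $u_\tau\to u$ in $\wsp$, whence $u\in\wspc$. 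Apart from this, the only non-elementary ingredient used above is the fractional Hardy inequality in its two forms; the hypothesis $s\neq\nicefrac1p$ is indispensable there, since at the threshold $sp=1$ it fails (surviving only with a logarithmic weight) and the stated identities break down. Both facts are classical for Lipschitz domains and are precisely the content of \cite[Corollary 1.4.4.5]{Grisvard}.
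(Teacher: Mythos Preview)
The paper does not prove this theorem at all: it is quoted as a known preliminary result, with the single line ``For the proof we refer the reader to \cite[Corollary 1.4.4.5]{Grisvard}.'' So there is no in-paper argument to compare your proposal against.

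That said, your sketch is a faithful outline of the classical proof one finds in Grisvard (and, in more modern form, in the fractional Hardy literature): reduce $\wspc\subseteq\twsp$ to a Hardy-type control of $\int_\Omega|u|^p d(x)^{-sp}\,dx$, use the two regimes of the fractional Hardy inequality ($sp>1$ with zero boundary data, $sp<1$ without), obtain density of $C_0^\infty(\Omega)$ in $\wsp$ for $sp<1$ by cut-off plus mollification, and for $sp>1$ show $\twsp\subseteq\wspc$ by inward translation in Lipschitz charts followed by mollification. The identification of $sp=1$ as the obstruction (failure of Hardy, logarithmic correction) is also correct. In short: your argument is sound and is exactly the route the cited reference takes; the paper itself simply delegates the proof.
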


	\begin{re}\label{re:extension} 
		$\twsp$ is a Banach space for the norm induced by $\wspr.$ 
		Moreover,
		if $\Omega\subset\R^n$ is a bounded open set 
		with Lipschitz boundary, 
		$s\in(0,1]$ and $p\in(1,\infty),$ then 
		$C_0^{\infty}(\Omega)$ is dense in $\twsp$ 
		and $\twsp\subset\wspc.$  
		See \cite[Theorem 1.4.2.2 and Corollary 1.4.4.10]{Grisvard}.
	\end{re}	

	\medskip

	For $s\in (0,1)$ and $p\in(1,\infty),$ we define
	the space $W^{-s,p'}(\Omega)$ ($\widetilde{W}^{-s,p'}(\Omega)$) as 
	the dual space of $\wspc$ ($\twsp$) where 
	$\nicefrac{1}{p'}+\nicefrac{1}{p}=1.$

	\subsection{Leray-Schauder degree}
		For the definition and some properties of Leray-Schauder degree,
		 for instance, see \cite{chang,RA1}.

		The proof of the next Leray-Schauder degree property is given in 
		\cite[Lemma 2.4]{delPino}. 

		\begin{lem}
			\label{lem:LSDP} Let $X,Y$ be Banach spaces with respective 
			norms $\|\cdot\|_X$	and $\|\cdot\|_Y.$ 
			Assume that $Y\subset X$ and that
			the inclusion $i\colon Y\to X$ is continuous. Let $\Omega_X,$ 
			$\Omega_Y$ be bounded open sets in $X$ and $Y,$  respectively,
			both containing $0,$ let $T\colon X\to Y$ be a completely
			continuous operator such that
			\[
				x-Tx\neq0\quad \forall x\in X \setminus\{0\}.
			\]
			Then
			\[
				\degg_X(I-i\circ T,\Omega_X,0)=\degg_Y(I-T\circ i,
				\Omega_Y,0).
			\]
		\end{lem}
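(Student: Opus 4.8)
The plan is to reduce both Leray--Schauder degrees to one and the same Brouwer degree, computed in a common finite--dimensional subspace of $Y$. Write $B_r^X$, $B_r^Y$ for the open balls of radius $r$ about $0$ in $X$, $Y$, and note first that $i\circ T\colon X\to X$ and $T\circ i\colon Y\to Y$ are compact, so both degrees are meaningful once $0$ is not attained on the boundaries in question. The hypothesis $x-Tx\neq0$ for $x\in X\setminus\{0\}$ says the equation $x=Tx$ has at most the solution $x=0$; if $T(0)\neq0$ it has none, so $I-i\circ T$ and $I-T\circ i$ never vanish and both degrees are $0$, and there is nothing to prove. Hence I may assume $T(0)=0$, so that $(I-i\circ T)^{-1}(0)=\{0\}$ in $X$ and $(I-T\circ i)^{-1}(0)=\{0\}$ in $Y$. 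Since $0\in\Omega_X\cap\Omega_Y$, both degrees are defined, and by the excision property each is independent of the chosen bounded open neighbourhood of $0$. So I may replace $\Omega_X$ by $B_R^X$ and $\Omega_Y$ by $B_\rho^Y$, fixing first $\rho=1$ and then $R$ so large that $i(\overline{B_\rho^Y})\subseteq B_R^X$ (possible since $i$ is bounded); in particular $B_\rho^Y\subseteq B_R^X$.

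Next I would carry out a finite--dimensional reduction inside $Y$. The set $K:=\overline{T(\overline{B_R^X})}$ is compact in $Y$ (because $T$ is completely continuous from $X$ into $Y$) and, by the choice of $R$, contains $T(i(\overline{B_\rho^Y}))$. For $\varepsilon>0$ let $P_\varepsilon\colon K\to E_\varepsilon$ be a Schauder projection onto a finite--dimensional subspace $E_\varepsilon\subset Y$ with $\|P_\varepsilon y-y\|_Y<\varepsilon$ on $K$, and put $T_\varepsilon:=P_\varepsilon\circ T$, whose range lies in $E_\varepsilon$; on the finite--dimensional space $E_\varepsilon$ the norms of $X$ and $Y$ are equivalent, so $E_\varepsilon$ is a subspace of both with the same topology. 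Since $i$ is continuous and $\|P_\varepsilon y-y\|_Y<\varepsilon$ on $K$, one has $i\circ T_\varepsilon\to i\circ T$ uniformly on $\overline{B_R^X}$ and $T_\varepsilon\circ i\to T\circ i$ uniformly on $\overline{B_\rho^Y}$. As $\inf_{\partial B_R^X}\|x-(i\circ T)x\|_X>0$ and $\inf_{\partial B_\rho^Y}\|y-(T\circ i)y\|_Y>0$ (standard consequences of compactness and the absence of boundary zeros), for $\varepsilon$ small the affine homotopies between $i\circ T$ and $i\circ T_\varepsilon$, and between $T\circ i$ and $T_\varepsilon\circ i$, are admissible, whence
\[
\degg_X(I-i\circ T,B_R^X,0)=\degg_X(I-i\circ T_\varepsilon,B_R^X,0),\qquad
\degg_Y(I-T\circ i,B_\rho^Y,0)=\degg_Y(I-T_\varepsilon\circ i,B_\rho^Y,0).
\]
By the reduction of the Leray--Schauder degree to the Brouwer degree on finite--dimensional subspaces, the first right--hand side equals $\degg_{\mathrm B}(I-T_\varepsilon,\,B_R^X\cap E_\varepsilon,\,0)$ and the second equals $\degg_{\mathrm B}(I-T_\varepsilon,\,B_\rho^Y\cap E_\varepsilon,\,0)$, where in both cases $I-T_\varepsilon$ is regarded as the \emph{same} self--map of $E_\varepsilon$.

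The crux is to show that these two Brouwer degrees coincide. Since $B_\rho^Y\cap E_\varepsilon\subseteq B_R^X\cap E_\varepsilon$, by excision it is enough to check that, for $\varepsilon$ small, every zero $x$ of $I-T_\varepsilon$ in $\overline{B_R^X}\cap E_\varepsilon$ satisfies $\|x\|_Y<\rho$. Such an $x$ obeys $\|x-Tx\|_Y=\|P_\varepsilon(Tx)-Tx\|_Y<\varepsilon$, since $Tx\in K$. If this failed, there would be $\varepsilon_k\to0$ and $x_k\in\overline{B_R^X}\cap E_{\varepsilon_k}$ with $\|x_k-Tx_k\|_Y<\varepsilon_k$ and $\|x_k\|_Y\ge\rho$; from $\|x_k\|_Y\le\|Tx_k\|_Y+\varepsilon_k$ and the boundedness of $T(\overline{B_R^X})$ in $Y$, the sequence $\{x_k\}$ is bounded in $Y$, hence in $X$, so along a subsequence $Tx_k\to z$ in $Y$, and then $x_k\to z$ in $Y$ and in $X$, so $Tx_k\to Tz$ by continuity, giving $z=Tz$ and thus $z=0$ — contradicting $\|x_k\|_Y\ge\rho$. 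Therefore
\[
\degg_X(I-i\circ T,\Omega_X,0)=\degg_{\mathrm B}(I-T_\varepsilon,B_R^X\cap E_\varepsilon,0)=\degg_{\mathrm B}(I-T_\varepsilon,B_\rho^Y\cap E_\varepsilon,0)=\degg_Y(I-T\circ i,\Omega_Y,0),
\]
which is the asserted equality. I expect this last step — a localization of the approximate fixed points near the origin, uniform in the approximation parameter $\varepsilon$ — to be the only real obstacle; everything else is the standard machinery of the degree (Schauder projections, excision, homotopy invariance, and reduction to finite dimensions).
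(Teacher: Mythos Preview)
The paper does not supply its own proof of this lemma; it simply refers to \cite[Lemma~2.4]{delPino}. So there is no in-paper argument to compare against, only the citation.

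Your argument is correct and is essentially the standard route one would expect: excise to balls, pick a single Schauder approximation $T_\varepsilon$ of $T$ with range in a finite-dimensional $E_\varepsilon\subset Y$, reduce both Leray--Schauder degrees to Brouwer degrees of the \emph{same} self-map $I-T_\varepsilon$ of $E_\varepsilon$ (using that all norms on $E_\varepsilon$ are equivalent, so the $X$- and $Y$-induced topologies coincide), and finally match the two Brouwer degrees by excision after localizing the approximate fixed points near $0$. The localization step you flag as the ``only real obstacle'' is handled cleanly by your compactness/contradiction argument, and the preliminary reduction to the case $T(0)=0$ is a nice touch that avoids a vacuous case analysis. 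One tiny cosmetic point: when you bound $\{x_k\}$ in $Y$ you already had $x_k\in\overline{B_R^X}$, so the boundedness in $X$ was immediate; the $Y$-boundedness is what you actually need to pass to the limit in $Y$, and you get it from $x_k=P_{\varepsilon_k}(Tx_k)\in K+\varepsilon_k B_1^Y$, exactly as you wrote.
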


\section{The Dirichlet problem}\label{DP}
	Let $\Omega$ be a smooth bounded domain in $\R^n,$ 
	and $p\in(1,\infty).$ We consider the
	operator 
	\begin{equation}
		\label{eq:op}
		\LL_{s,p}u\coloneqq
		\begin{cases}
				-\Delta_p u &\text{ if } s=1,\\
				(-\Delta)^s_p u &\text{ if } 0<s<1,	
		\end{cases}
	\end{equation}
	where $\Delta_p$ is the $p-$Laplace operator, that is
	\[
		\Delta_p u\coloneqq\divv(|\nabla u|^{p-2}\nabla u), 
	\]
	and $(-\Delta)^s_p$ is the fractional $p-$Laplace operator, that is
	\begin{equation}
		\label{eq:opf}
		(-\Delta)^s_p u=2\K(1-s)\mbox{P.V.}\int_{\R^n}
		\dfrac{|u(x)-u(y)|^{p-2}(u(x)-u(y))}{|x-y|^{n+sp}}\, dy
	\end{equation}
	with $\K$ is the constant of Theorem \ref{teo:bbm1}.

	For further details on the fractional $p-$Laplace operator,
	we refer to \cite{FP,LL} and references therein.

	\medskip

	It is well known that the Dirichlet problem 
	\begin{equation}
		\label{eq:app}
		\begin{cases}
			-\Delta_p u=h &\mbox{in }\Omega,\\
			u=0 &\mbox{on }\partial\Omega,
		\end{cases}
	\end{equation}
	has a unique weak solution for each $h\in W^{-1,p'}(\Omega),$ i.e.
	there exists a unique $u\in\wupc$ such that
	\[
		\int_{\Omega}|\nabla u(x)|^{p-2}\nabla u(x)\nabla\phi(x)\, dx=
		\langle h,\phi\rangle \quad\forall \phi\in C_0^\infty(\Omega),
	\]
	where $\langle\cdot,\cdot\rangle$ denotes the duality 
	pairing between $\wupc$ and $W^{-1,p'}(\Omega).$

	We also recall that the weak solution is the unique minimizer
	of the functional $J_{1,p}:\wupc\to\R$ given by
	\[
		J_{1,p}(v)=\dfrac1p|v|_{\wupc}^p-\langle h, v\rangle.
	\]
	See, for instance, \cite{struwe} and references therein.  
	
	\medskip

	Now, we study the Dirichlet problem for fractional 
	$p-$Laplace equation.

	\medskip

	Let $s\in(0,1),$ $p\in(1,\infty)$ and $h\in W^{-s,p'}(\Omega).$ 
	We say that $u\in\twsp$ is a weak solution of the Dirichlet problem
	\begin{equation}
		\label{eq:apf}
		\begin{cases}
			(-\Delta)^s_p u=h &\mbox{in } \Omega,\\
			u=0 &\mbox{in }\R^n\setminus\Omega, 
		\end{cases} 
	\end{equation}
	if
	\[
		\K(1-s)\mathcal{H}_{s,p}
		(u,v)=\langle h, v \rangle_s\quad 
		\forall v\in \twsp,
	\]
	where
	\begin{equation}
		\label{eq:H}
		\mathcal{H}_{s,p}(u,v)\coloneqq\int_{\R^n}\int_{\R^n}
		\dfrac{|u(x)-u(y)|^{p-2}(u(x)-u(y))}{|x-y|^{n+sp}}
		(v(x)-v(y))\, dydx,
	\end{equation}
	and $\langle\cdot,\cdot\rangle_s$ denotes the duality pairing between 
	$\twsp$ and $\widetilde{W}^{-s,p'}(\Omega).$

	\medskip

	It is clear that, the weak solutions are critical
	points of the functional $J_{s,p}:\twsp\to\R$ given by
	\[
		J_{s,p}(v)=\dfrac1p\K(1-s)|v|_{\wspr}^p-\langle h, v\rangle_s.
	\]

	Now, it is easy to see that $J_{s,p}$ 
	is bounded below, coercive, strictly convex and
	sequentially weakly lower semi continuous. 
	Then it has a unique critical point which is a global minimum.
	Therefore the Dirichlet problem \eqref{eq:apf} has a unique weak 
	solution.

	\medskip

	Thus, given $s\in(0,1]$ and $h\in \widetilde{W}^{-s,p'}(\Omega),$ the 
	Dirichlet problem
	\begin{equation}
		\label{eq:ap}
		\begin{cases}
			\LL_{s,p} u=h &\mbox{in } \Omega,\\
			u=0 &\mbox{in }\R^n\setminus\Omega, 
		\end{cases} 
	\end{equation}
	has a unique weak solution $u_{s,p,h}\in \twsp.$ Moreover, the 
	operator
	\begin{align*}
		\RR_{s,p}\colon \widetilde{W}^{-s,p'}(\Omega)&\to\twsp\\
					h&\to u_{s,p,h}
	\end{align*}
	is continuous. By the Rellich-Kondrachov theorem (case $s=1$)
	and  Theorem \ref{teo:compinc} (case $s\in(0,1)$), the restriction of
	$\RR_{s,p}$ to $L^{q^\prime}(\Omega)$ with $q\in(1, p^\star_s)$
	is a completely continuous operator, 
	that is  for every weakly convergent sequence 
	$\{h_k\}_{k\in\N}$ from $L^{q^\prime}(\Omega)$, 
	the sequence $\{\RR_{s,p}(h_k)\}_{k\in\N}$ is norm-convergent in 
	$\twsp.$

	\medskip

	Our next result show that the operator $\RR_{s,p}$ 
	is continuous with respect to $s$ and $h.$
	\begin{lem}
		\label{lem:contdeR} 
		Let  $p\in(1,\infty),$ $s_0\in(0,1),$
		and $1<q<p_{s_0}^\star.$ Then the operator
		\begin{align*}
			\RR_{p}\colon [s_0,1]\times L^{q^\prime}(\Omega)
			&\to L^{q}(\Omega)\\
			(s,h)&\to \RR_{s,p}(h)
		\end{align*}
		is completely continuous.
	\end{lem}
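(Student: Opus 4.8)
The plan is to prove the sequential version of complete continuity: if $s_k\to s$ in $[s_0,1]$ and $h_k\rightharpoonup h$ weakly in $L^{q'}(\Omega)$, then $u_k:=\RR_{s_k,p}(h_k)\to u:=\RR_{s,p}(h)$ strongly in $L^{q}(\Omega)$ (this in particular yields continuity and the mapping of bounded sets to relatively compact ones, since $L^{q'}(\Omega)$ is reflexive). Throughout, $\mathcal{K}(1-s)$ and $|\,\cdot\,|_{W^{s,p}(\R^n)}$ are read as $1$ and $|\,\cdot\,|_{W^{1,p}_{0}(\Omega)}$ when $s=1$. First I would test the weak formulation of $\LL_{s_k,p}u_k=h_k$ against $u_k$, obtaining the energy identity
\[
	\mathcal{K}(1-s_k)\,|u_k|^{p}_{W^{s_k,p}(\R^n)}=\langle h_k,u_k\rangle\le\|h_k\|_{L^{q'}(\Omega)}\,\|u_k\|_{L^{q}(\Omega)} .
\]
To close this into an \emph{a priori} estimate I would use a Sobolev--Poincar\'e inequality uniform in $s$ up to $s=1$: there is $C=C(n,p,q,\Omega,s_0)$ with $\|u\|_{L^{q}(\Omega)}^{p}\le C\,(1-s)\,|u|^{p}_{W^{s,p}(\R^n)}$ for every $s\in[s_0,1)$ and $u\in\widetilde{W}^{s,p}(\Omega)$. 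For $q=p$ this is the statement $\inf_{s\in[s_0,1)}\lambda_1(s,p)>0$; its only delicate point is the uniformity as $s\uparrow1$, where the constant of Lemma~\ref{lem:poincare} degenerates, and it follows from a compactness/contradiction argument resting on Theorems~\ref{teo:bbm1} and \ref{teo:bbm2} (a normalized minimizing sequence would converge -- via Theorem~\ref{teo:compinc} and Lemma~\ref{lem:inclu} if $s<1$, via Theorem~\ref{teo:bbm2} if $s=1$ -- to a nonzero constant satisfying the homogeneous Dirichlet condition, which is impossible). The range $p<q<p^\star_{s_0}$ then follows by combining this with Remark~\ref{re:mazya} (used at an auxiliary exponent in $(0,\min\{s_0,n/p\})$ if $s_0p\ge n$) and interpolation, and $1<q\le p$ is immediate since $\Omega$ is bounded. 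Feeding the inequality back in yields $\mathcal{K}(1-s_k)|u_k|^{p}_{W^{s_k,p}(\R^n)}\le C\big((1-s_k)|u_k|^{p}_{W^{s_k,p}(\R^n)}\big)^{1/p}$, so $(1-s_k)|u_k|^{p}_{W^{s_k,p}(\R^n)}$, and hence also $\|u_k\|_{L^{q}(\Omega)}$ and $\|u_k\|_{L^{p}(\Omega)}$, are bounded.

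From these bounds I would extract compactness. If $s<1$, the quantities $1-s_k$ stay bounded below, so Lemma~\ref{lem:inclu} makes $\{u_k\}$ bounded in $\widetilde{W}^{s_0,p}(\Omega)$ and the compact embedding $\widetilde{W}^{s_0,p}(\Omega)\hookrightarrow L^{q}(\Omega)$ of Theorem~\ref{teo:compinc} (valid as $q<p^\star_{s_0}$) yields a subsequence with $u_k\to\bar u$ in $L^{q}(\Omega)$ and a.e.; if $s=1$, Theorem~\ref{teo:bbm2} (with boundedness in $L^{p}(\Omega)$ replacing the vanishing-average hypothesis) together with the compact embedding $W^{1-\varepsilon,p}(\Omega)\hookrightarrow L^{q}(\Omega)$ for $\varepsilon>0$ small enough that $p^\star_{1-\varepsilon}>q$ gives the same. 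It remains to identify $\bar u$ with $\RR_{s,p}(h)$, which I would do through the $\Gamma$-convergence, in the $L^{q}(\Omega)$-topology, of the functionals $J_{s_k,p}$ (with data $h_k$) towards $J_{s,p}$ (with data $h$): the $\liminf$ inequality follows from Fatou's lemma applied to the double integral -- using $u_k\to\bar u$ a.e.\ and $s_k\to s$ -- when $s<1$, and from the lower-semicontinuity companion of Theorem~\ref{teo:bbm1} (see \cite{bourgain}) together with Theorem~\ref{teo:bbm2} when $s=1$, while the linear term converges because strong $L^{q}$-convergence meets weak $L^{q'}$-convergence; recovery sequences are built by approximating the limit function by $\varphi_m\in C_0^\infty(\Omega)$ (dense by Remark~\ref{re:extension}), using the continuity on $[s_0,1]$ of $s\mapsto\mathcal{K}(1-s)|\varphi_m|^{p}_{W^{s,p}(\R^n)}$ (Remark~\ref{re:sopcomp} supplying the value at $s=1$), and a diagonal argument; equicoercivity is the \emph{a priori} bound. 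As each $u_k$ minimizes $J_{s_k,p}$ and the limit functional has the unique minimizer $\RR_{s,p}(h)$, every $L^{q}$-limit point of $\{u_k\}$ equals $\RR_{s,p}(h)$; with the precompactness in $L^{q}(\Omega)$ this forces $u_k\to\RR_{s,p}(h)$ for the full sequence, which is the assertion. (One could instead pass to the limit directly in $\mathcal{K}(1-s_k)\mathcal{H}_{s_k,p}(u_k,\varphi)=\langle h_k,\varphi\rangle$ for $\varphi\in C_0^\infty(\Omega)$: the right-hand side tends to $\langle h,\varphi\rangle$, and the left-hand side is handled by splitting the kernel at $|x-y|=1$, using $|\varphi(x)-\varphi(y)|\le\|\nabla\varphi\|_{\infty}|x-y|$ near the diagonal, and -- only when $s=1$ -- the Bourgain--Brezis--Mironescu limit for the nonlinear form.)

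The step I expect to be the main obstacle is the passage $s_k\uparrow1$: both the uniform Sobolev--Poincar\'e inequality and the identification of the limit (equivalently, the limit of the nonlinear nonlocal form) across $s=1$ rest essentially on the Bourgain--Brezis--Mironescu-type results of Theorems~\ref{teo:bbm1}--\ref{teo:bbm2} and Remark~\ref{re:sopcomp}; for $s_k\to s<1$ everything is instead controlled by the embeddings of Lemma~\ref{lem:inclu} and Theorem~\ref{teo:compinc} and by Fatou's lemma.
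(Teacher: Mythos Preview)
Your proposal is correct and follows essentially the same route as the paper: an a priori energy estimate from testing the equation against $u_k$, compactness in $L^q(\Omega)$ via fractional Sobolev embeddings (together with the Bourgain--Brezis--Mironescu results when $s_k\to1$), and identification of the limit through the minimization characterization of $\RR_{s,p}$, which you phrase as $\Gamma$-convergence of the functionals $J_{s_k,p}$. The paper organizes the argument slightly differently---it first proves compactness by bounding $\{u_k\}$ directly in $W^{s_0,p}(\Omega)$ via Lemmas~\ref{lem:inclu} and~\ref{lem:poincare} (so no case split $s<1$ versus $s=1$ and no separate uniform Poincar\'e lemma are needed at that stage) and then proves continuity by the same minimization comparison you describe---but the substance is the same.
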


	\begin{proof}
		We start by proving that $\RR_p$ is compact.
	
		Let $\{(s_k,h_k)\}_{k\in\N}$ be a bounded sequence in
		$[s_0,1]\times L^{q^\prime}(\Omega).$ We want to prove that
		$u_k=\RR_p(s_k,h_k)$ has a strongly convergent subsequence
		in $L^{q}(\Omega).$
	
		For all $k\in\N,$ $u_k$ satisfies
		\[
			|u_k|_{W^{s_k,p}(\R^n)}^p
			=\int_{\Omega}h_k(x)u_k(x)\, dx.	
		\]
		Then, by H\"older inequality and using $q< p_{s_0}^\star,$ 
	 	we have
		\begin{equation}
			\label{eq:al1}
			|u_k|_{W^{s_k,p}(\R^n)}^p\le\|h_k\|_{L^{q^\prime}(\Omega)}
		\|u_k\|_{L^q(\Omega)}\le C\|u_k\|_{W^{s_0,p}(\Omega)}
	\end{equation}
	where $C$ is a constant independent of $k.$ 
	Thus, by Lemma \ref{lem:inclu}, Lemma \ref{lem:poincare}
	and \eqref{eq:al1}, we get
	\[
		\|u_k\|_{W^{s_0,p}(\Omega)}\le C
	\]
	for some constant $C$ independent of $k.$ Hence 
	$\{u_k\}_{k\in\N}$  has a strongly convergent subsequence
	in $L^{q}(\Omega)$ due to $\{u_k\}_{k\in\N}$ is 
	bounded in $W^{s_0,p}(\Omega)$ and $1<q<p_{s_0}^\star.$
	
	\medskip
	
	Finally, we show that $\RR_{p}$ is continuous.
	
	\medskip
	
	Let $(s_k,h_k)\to(s,h)$ in $[s_0,1]\times L^{q^\prime}(\Omega)$
	as $k\to\infty,$ $u_k=\RR_p(s_k,h_k)$ $k\in\N,$ and 
	$u=\RR_p(s,h).$ We want to
	show that $u_k\to u$ strongly in $L^{q}(\Omega).$ In fact,
	we only need to show that $u$ is the only accumulation point of 
	$\{u_k\}_{k\in\N}$ due to $\RR_p$ is compact.
	
	Let $\{u_j\}_{j\in\N}$ be a subsequence of $\{u_k\}_{k\in\N}$
	converging to $v$ in $L^q(\Omega).$ We have to prove that $v=u.$
	
	Give $w\in \twsp$ we define
	\[
		|w|_{s,p}^p=
		         \begin{cases}
	                  |w|_{\wup}^p &\text{ if } s=1,\\
	             	  \K(1-s)|w|_{\wspr}^p &\text{ if } s\in(0,1).
	             \end{cases}
	\]
	
	Let $\tilde{v}$ be the continuation of $v$ by zero outside 
	$\Omega.$ Then, it is enough to prove that 
	\begin{equation}
	\label{eq:al2}
		\dfrac1p|\tilde{v}|_{s,p}^p-\int_\Omega v(x)h(x)\,dx\le
		\dfrac1p|w|_{s,p}^p-\int_\Omega w(x)h(x)\,dx
		\quad\forall w\in\twsp.
	\end{equation}
	
	On the other hand, we know that
	\begin{equation}
	\label{eq:al3}
		\dfrac1p|u_j|_{s_j,p}^p-\int_\Omega u_j(x)h_j(x)\,dx\le
		\dfrac1p|w|_{s_j,p}^p-\int_\Omega w(x)h_j(x)\,dx
	\end{equation}
	for all $w\in \widetilde{W}^{s_j,p}(\Omega).$
	
	\medskip
	
	Now we need consider the following two cases.
	
	\medskip
	
	{\it Case $s\neq 1.$} Since $u_j\to v$ strongly in 
	$L^{q}(\Omega),$ 
	we have that $u_j\to \tilde{v}$ a.e. in $\R^n.$ 
	Then, using that $h_j\to h$ strongly in $L^{q^\prime}(\Omega)$
	and by Fatou's lemma, we have
	\begin{equation}\label{eq:al4}
		\dfrac1p|\tilde{v}|_{s,p}^p-
		\int_\Omega v(x)h(x)\,dx
		\le\liminf_{j\to\infty}
		\dfrac1p|u_j|_{s_j,p}^p-\int_\Omega u_j(x)h_j(x)\,dx.
	\end{equation}
	Thus, for any $\phi\in C^\infty_0(\Omega),$ 
	by \eqref{eq:al4},
	\eqref{eq:al3} and dominate convergence theorem, we get
	\[
		\dfrac1p|\tilde{v}|_{s,p}^p-\int_\Omega v(x)h(x)\,dx
		\le \dfrac1p|\phi|_{s,p}^p-\int_\Omega 
		\phi(x)h(x)\,dx.
	\]
	Therefore, $v\in\twsp$ and 
	by density, \eqref{eq:al2} holds.
	
	\medskip
	
	{\it Case $s=1.$} Let $\phi\in C_0^\infty(\Omega).$
	By \eqref{eq:al3} and Remark \ref{re:sopcomp}, we have 
	\begin{align*}
         \limsup_{j\to\infty}\dfrac1p|u_j|_{s_j,p}^p-
		\int_\Omega v(x)h(x)\,dx
		&=\limsup_{j\to\infty}\dfrac1p|u_j|_{s_j,p}^p-
		\int_\Omega u_j(x)h_j(x)\,dx\\
		& \le \dfrac1p|\phi|_{1,p}^p
		-\int_\Omega \phi(x)h(x)\,dx,	
	\end{align*}
	due to $u_j\to v$ strongly in $L^q(\Omega)$ and 
	$h_j\to h$ strongly in $L^{q^\prime}(\Omega).$ Then
	\begin{equation}
	\label{eq:al5}
		\limsup_{j\to\infty}\dfrac1p|u_j|_{s_j,p}^p
		 \le  \dfrac1p|\phi|_{1,p}^p
		-\int_\Omega \phi(x)h(x)\,dx + \int_\Omega v(x)h(x)\,dx.
    \end{equation}
	Therefore
	\[
		|u_j|_{s_j,p}\le C
	\]
	for some constant $C$ independent of $j.$
	
	Thus, by Theorem \ref{teo:bbm2}, 
	there exist $w\in\wupc$ and a subsequence of 
	$\{u_{j}\}_{j\in\N},$ still denoted by $\{u_{j}\}_{j\in\N},$
	 such that
	\begin{align*}
		u_{j}\to w &\quad\mbox{strongly in } \lp\\
		u_{j}\rightharpoonup w&\quad\mbox{weakly in }
		 W^{1-\varepsilon,p}(\Omega)
	\end{align*}
	for all $\varepsilon> 0.$  Then $v=w,$ and $v\in\wupc.$
	
	On the other hand, given $\varepsilon>0,$  
	there exists $j_0\in\N$ such that $1-\varepsilon<s_j$ for all
	$j\ge j_0$ due to $s_j\to 1.$ Then,
	by Remark \ref{re:seminor(1-s)} 
	\begin{equation}
	\label{eq:al6}
		\K\varepsilon|u_j|_{W^{1-\varepsilon,p}(\Omega)}^p\le 
		2^{\varepsilon p}
		\mbox{diam}(\Omega)^{(s_j-1+\varepsilon)p}
		|u_j|_{s_j,p}^p \quad\forall j\ge 
		j_0.
	\end{equation}
	Thus, using $u_{j}\rightharpoonup v$ weakly in 
	$W^{1-\varepsilon,p}(\Omega)$ and  by \eqref{eq:al6} 
	and \eqref{eq:al5},
	\begin{align*}
		\dfrac{2^{-\varepsilon p}
		\mbox{diam}(\Omega)^{-\varepsilon p}}{p}&\K\varepsilon
		|v|_{W^{1-\varepsilon,p}(\Omega)}^p
		\le \liminf_{j\to\infty}\frac1p|u_j|_{s_j,p}^p\\
		&\le \dfrac1p|\phi|_{1,p}^p
		-\int_\Omega \phi(x)h(x)\,dx + \int_\Omega v(x)h(x)\,dx.
	\end{align*}
	\[
	\]
	Now, by Theorem \ref{teo:bbm1}, letting $\varepsilon\to0^+$ we 
	get
	\[
		\dfrac1{p}|v|_{W^{1,p}(\Omega)}^p
		\le \dfrac1p|\phi|_{1,p}^p
		-\int_\Omega \phi(x)h(x)\,dx + \int_\Omega v(x)h(x)\,dx.
	\]
	Thus, since $\phi$ is arbitrary, we have that
	\[
		\dfrac1{p}|v|_{W^{1,p}(\Omega)}^p- \int_\Omega v(x)h(x)\,dx
		\le \dfrac1p|\phi|_{1,p}^p
		-\int_\Omega \phi(x)h(x)\,dx\quad\forall\phi\in
		C_0^\infty(\Omega).
	\]
	Hence, by density, \eqref{eq:al2} holds. This completes the proof.
\qed \end{proof}

\begin{re}
	Let  $s_0\in(0,1),$ and $p\in(1,\infty).$ Then the operator
	\begin{align*}
		\RR_{p}\colon [s_0,1]\times L^{p^\prime}(\Omega)
		&\to \lp\\
		(s,h)&\to \RR_{s,p}(h)
	\end{align*}
	is completely continuous.	
\end{re}

\section{The eigenvalue problems with weight}\label{ffl}

In this section we show some results concerning the
the following eigenvalue problems
\begin{equation}
	\label{eq:ep}
	\begin{cases}
		\LL_{s,p}(u)=\lambda h(x)|u|^{p-2}u 
		&\text{ in } \Omega,\\
		u=0 &\text{ in } \R^n\setminus\Omega.
	\end{cases}	
\end{equation}

Here $\Omega$ is a bounded domain in $\R^n$ with Lipschitz boundary, 
$s\in(0,1],$ $p\in(1,\infty)$ and $h\in \mathcal{A}=\{f\in L^{\infty}(\Omega)\colon |\{x\in\Omega\colon f(x)>0\}|>0\}.$

\subsection{The case $s=1,$ the first $p-$eigenvalue} 
Let $\Omega\subset\R^n$ be a bounded domain with Lipschitz   
boundary, $p\in(1,\infty)$ and $h\in\mathcal{A}.$
%

The first eigenvalue $\lambda_1(1,p,h)$ can be characterized as
\[
	\lambda_1(1,p,h)\coloneqq\inf\left\{
	|u|_{\wup}^p
	\colon u\in\wupc, \int_{\Omega}h(x)|u(x)|^p \, dx
	=1\right\},
\]
and it is simple and isolated, see \cite{Anane}. For simplicity,
we omit mention of $h$ when $h\equiv 1,$ and thus we write 
$\lambda_1(1,p)$ in place of $\lambda_1(1,p,1).$

\subsection{Case $s\in(0,1),$ the first fractional $p-$eigenvalue}
Let $\Omega\subset\R^n$ be a bounded domain with Lipschitz   
boundary, $s\in(0,1),$ $p\in(1,\infty)$ and $h\in\mathcal{A}.$ In this section, we 
analyse  the (non-linear non-local) eigenvalue problems
\begin{equation}
	\label{eq:epf}
	\begin{cases}
		(-\Delta)^s_p u=\lambda h(x) |u|^{p-2}u &\text{ in } \Omega,\\
	 	u=0 &\text{ in } \mathbb{R}^n\setminus \Omega.
	\end{cases}
\end{equation}  

\medskip

A function $u\in\twsp$ 
is a weak solution of \eqref{eq:epf} if it
satisfies
\begin{equation}\label{eq:debil}
	\K(1-s)\mathcal{H}_{s,p}
	(u,v)=\lambda\int_\Omega h(x)|u(x)|^{p-2}u(x)v(x)
	\, dx\quad \forall v\in\twsp.
\end{equation}

\medskip

We say that $\lambda\in\mathbb{R}$ is a fractional $p-$eigenvalue
provided there exists a non-trivial weak solution 
$u\in\twsp$ of \eqref{eq:epf}. The function $u$ is a corresponding 
eigenfunction.

\medskip

The first fractional p-eigenvalue is
\begin{equation}
	\label{eq:RQ}
	\begin{aligned}
	\lambda_1(s,p,&h)\coloneqq\\
	&\K(1-s)\inf\!\left\{
	|u|_{\wspr}^p\colon u\in\twsp, \int_{\Omega}
	h(x)|u(x)|^pdx=1\right\}.
\end{aligned}
\end{equation}
As before, in the case $h\equiv1,$ for simplicity, 
we write $\lambda_1(s,p)$ in place of $\lambda_1(s,p,1).$

\medskip

First we want to mention that $\{u\in\twsp\colon\int_{\Omega}
h(x)|u(x)|^p\, dx=1\}\neq\emptyset$ due to $|\{x\in\Omega\colon h(x) >0\}
|>0.$ Therefore
$\lambda_1(s,p,h)$ is well defined and is non-negative.

\medskip

We also know that $\lambda_1(s,p)>0$ and there exists a non-negative 
function $u\in\twsp$ such that
\begin{itemize}
	\item $u>0$ in $\Omega,$ and $u=0$ in $\R^n\setminus\Omega;$
	\item $u$ is a minimizer of \eqref{eq:RQ} with $h\equiv1$;
	\item $u$ is a weak solution of \eqref{eq:epf} with 
		$\lambda=\lambda_1(s,p)$ and $h\equiv1,$ 
		that is $u$ is an eigenfunction of \eqref{eq:op} with eigenvalue 
		$\lambda_1(s,p).$
\end{itemize}
Moreover $\lambda_1(s,p)$ is simple, and 
if $sp>n$ then $\lambda_1(s,p)$  is isolated.
See \cite[Theorem 5, Theorem 14 and Theorem 19]{LL},
\cite[Theorems A1]{BF} and \cite[Theorem 4.2]{FP}.
	 
\medskip

The rest of this section is devoted to generalize these results 
 for the first eigenvalue of  \eqref{eq:epf} with $h\equiv1$.

\begin{teo}\label{teo:LL}
     Let $\Omega\subset\R^n$ be a bounded domain with Lipschitz 
     boundary, $s\in(0,1),$ $p\in(1,\infty),$ and $h\in\mathcal{A}.$ 
     There exists a non-negative function 
     $u\in\twsp,$ such that
     \begin{itemize}
		\item $u\neq0$ in $\Omega;$
		\item $u$ is a minimizer of \eqref{eq:RQ};
		\item $u$ is a weak solution of \eqref{eq:epf} with 
				$\lambda=\lambda_1(s,p,h),$ that is $u$ is an 
				eigenfunction of \eqref{eq:op} with eigenvalue 
				$\lambda_1(s,p,h).$
     \end{itemize}
\end{teo}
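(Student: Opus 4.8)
The plan is to apply the direct method of the calculus of variations: first produce a minimizer of the Rayleigh quotient in \eqref{eq:RQ}, and then derive the weak equation \eqref{eq:debil} as its Euler--Lagrange equation via a Lagrange multiplier.

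First I would take a minimizing sequence $\{u_k\}_{k\in\N}\subset\twsp$ with $\int_\Omega h(x)|u_k(x)|^p\,dx=1$ for every $k$ and $\K(1-s)|u_k|_{\wspr}^p\to\lambda_1(s,p,h)$. By Lemma \ref{lem:poincare} the $\lp$-norms of the $u_k$ are dominated by their seminorms, so $\{u_k\}$ is bounded in $\wspr$; since $\twsp$ is a closed subspace of the reflexive space $\wspr$ (Remark \ref{re:extension}), after passing to a subsequence we have $u_k\rightharpoonup u$ weakly in $\twsp$. By the compact embedding in Theorem \ref{teo:compinc} we may also assume $u_k\to u$ strongly in $\lp$ and, along a further subsequence, a.e.\ in $\Omega$; as the $u_k$ and $u$ vanish a.e.\ outside $\Omega$, this convergence holds a.e.\ in $\R^n$. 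Because $h\in L^\infty(\Omega)$, strong $\lp$-convergence gives $\int_\Omega h|u|^p\,dx=\lim_k\int_\Omega h|u_k|^p\,dx=1$; in particular $u\neq0$ in $\Omega$, so $u$ is admissible in \eqref{eq:RQ}. Applying Fatou's lemma to the double integral defining $|\cdot|_{\wspr}^p$ (the integrands converge a.e.\ on $\R^n\times\R^n$) yields $\K(1-s)|u|_{\wspr}^p\le\liminf_k\K(1-s)|u_k|_{\wspr}^p=\lambda_1(s,p,h)$, while the reverse inequality holds by the definition \eqref{eq:RQ}. Hence $u$ is a minimizer. Finally, replacing $u$ by $|u|$ leaves the constraint unchanged and does not increase the seminorm, since $\bigl||u(x)|-|u(y)|\bigr|\le|u(x)-u(y)|$; so we may assume $u\ge0$.

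It remains to show $u$ solves \eqref{eq:epf} weakly with $\lambda=\lambda_1(s,p,h)$. Consider the functionals $I(w)=\K(1-s)|w|_{\wspr}^p$ and $G(w)=\int_\Omega h(x)|w(x)|^p\,dx$ on $\twsp$. Both are of class $C^1$ (for $G$ one differentiates under the integral sign using $h\in L^\infty$ and the embedding $\twsp\hookrightarrow\lp$; for $I$ the G\^ateaux derivative is computed using that $\mathcal{H}_{s,p}(w,v)$ is finite for $w,v\in\twsp$ by H\"older's inequality with respect to the measure $|x-y|^{-n-sp}\,dx\,dy$), with
\[
	\langle I'(w),v\rangle=p\,\K(1-s)\,\mathcal{H}_{s,p}(w,v),\qquad
	\langle G'(w),v\rangle=p\int_\Omega h(x)|w(x)|^{p-2}w(x)v(x)\,dx .
\]
Since $u$ minimizes $I$ on $\{G=1\}$ and $\langle G'(u),u\rangle=p\int_\Omega h|u|^p\,dx=p\neq0$, the level set $\{G=1\}$ is a $C^1$ manifold near $u$ and the Lagrange multiplier rule gives $\mu\in\R$ with $I'(u)=\mu\,G'(u)$ in $\widetilde{W}^{-s,p'}(\Omega)$. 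Testing with $v=u$ gives $p\,\K(1-s)|u|_{\wspr}^p=\mu\,p$, so $\mu=\K(1-s)|u|_{\wspr}^p=\lambda_1(s,p,h)$, and therefore
\[
	\K(1-s)\,\mathcal{H}_{s,p}(u,v)=\lambda_1(s,p,h)\int_\Omega h(x)|u(x)|^{p-2}u(x)v(x)\,dx
	\qquad\forall v\in\twsp,
\]
which is precisely \eqref{eq:debil}. Thus $u$ is an eigenfunction of \eqref{eq:op} with eigenvalue $\lambda_1(s,p,h)$.

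I expect the main difficulty to be the soft bookkeeping behind the variational argument rather than any deep estimate: one must verify that $G$, and hence the constraint manifold, is genuinely $C^1$ on $\twsp$ and that $I$ has the stated G\^ateaux derivative, the crucial point being the finiteness and continuity of $\mathcal{H}_{s,p}$. An alternative that bypasses the abstract multiplier theorem is to insert $u+tv$ directly into the quotient $R(w)=I(w)/G(w)$, use $R(u)\le R(u+tv)$ for all small $t$ and differentiate at $t=0$; clearing denominators reproduces the same identity. Note that the present statement only claims $u\ge0$ and $u\neq0$; genuine positivity $u>0$ in $\Omega$ (used elsewhere) requires a separate argument and is not needed here.
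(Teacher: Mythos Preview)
Your proof is correct and follows essentially the same route as the paper: direct method on a minimizing sequence (boundedness, weak compactness in $\twsp$, strong $L^p$ convergence to pass the constraint, lower semicontinuity of the seminorm), replacement by $|u|$, and then the Lagrange multiplier rule tested with $v=u$ to identify $\mu=\lambda_1(s,p,h)$. The only cosmetic differences are that the paper invokes weak lower semicontinuity of the norm directly (rather than Fatou via a.e.\ convergence) and states the multiplier rule in the $(a,b)$ form with $a+b\neq0$ before normalizing, whereas you use the regular-constraint version since $G'(u)\neq0$.
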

\begin{proof}
	Let $\{u_j\}_{j\in\mathbb{N}}$ be a minimizing sequence, that is
	$u_j\in\twsp,$ 
	\[
		\int_{\Omega} h(x) |u_j(x)|^p \, dx=1 \mbox{ and }
		\lim_{j\to\infty}\K(1-s)|u_j|_{W^{s,p}(\R^n)}^p=\lambda(s,p,h).
	\]
	Then $\{u_j\}_{j\in\mathbb{N}}$ is bounded in $\twsp.$ Therefore, 
	there exit a subsequence, still denoted by 
	$\{u_j\}_{j\in\mathbb{N}},$ and $u\in\twsp$ such that
	\begin{align*}
		u_j\rightharpoonup u&\mbox{ weakly in }\twsp,\\
		u_j\to u &\mbox{ strongly in }\lp.
	\end{align*}
	Thus 
	\[
		\int_{\Omega}h(x)|u(x)|^p\, dx = 1
	\] 
	and
	\[
		\K(1-s)|u|_{\wspr}^p\le 
		\lim_{j\to\infty}\K(1-s)|u_j|_{\wspr}^p=\lambda(s,p,h).
	\]
	Then   $\K(1-s)|u|_{\wspr}^p=\lambda(s,p,h),$ that is 
	$u$ is a minimizer of \eqref{eq:RQ}. It is easy to see that $|u|$ is 
	also a minimizer of \eqref{eq:RQ}, this shows that there exists 
	a non-negative minimizer of \eqref{eq:RQ}.
	
	Finally, by the Lagrange multiplier rule 
	(see \cite[Theorem 2.2.10]{PK}) there exist 
	$a,b\in\R$ such that $a+b\neq0,$ and
	\[
		a\K(1-s)\mathcal{H}_{s,p}(u,v)+
		b\int_{\Omega}h(x)|u(x)|^{p-2}u(x)v(x)\, dx=0 \quad\forall 
		v\in\twsp.
	\]
	If $a=0,$ then $b\neq0$ and taking $v=u,$ we get 
	$\int_{\Omega}h(x)|u(x)|^p\, dx=0$
	a contradiction because $\int_{\Omega}h(x)|u(x)|^p\, dx=1.$ Hence 
	$a\neq0,$ and without any loss of generality, we can assume that 
	$a=1.$ Then
	\[
		\K(1-s)\mathcal{H}_{s,p}(u,v)+
		b\int_{\Omega}h(x)|u(x)|^{p-2}u(x)v(x)\, dx=0 \quad\forall 
		v\in\twsp.
	\]
	Again, taking $v=u$ and using that 
	\[
		\K(1-s)\mathcal{H}_{s,p}(u,v)=\K(1-s)|u|_{\wspr}^p=\lambda_1(s,p,h)
	\]
	and 
	\[
		\int_{\Omega}h(x)|u(x)|^p\, dx=1,
	\] 
	we have $b=-\lambda_1(s,p,h).$
\qed \end{proof}

Our next aim is to show that a non-negative eigenfunction associated to
$\lambda_1(s,p,h)$ is in really positive. For this we will need a strong 
minimum principle.

\medskip

We start by a definition. Let $p\in(1,\infty),$ $s\in(0,1),$  
$h\in \mathcal{A},$ and $\lambda\in\R.$ 
We say that $u\in \twsp$ is a weak 
super-solution of \eqref{eq:epf} 
if
\[
	\K(1-s)\mathcal{H}_{s,p}(u,v)\ge
	\lambda\int_{\Omega}h(x)|u(x)|^{p-1}u(x)	v(x)\, dx
\]
for all $v\in\twsp,v\ge0.$

\medskip

Following the proof of the Di Castro-Kussi-Palatucci logarithmic lemma (see 
\cite[Lemma 1.3]{di2014local}) we have the following result.

\begin{lem}\label{lem:DKP}
	Let $\Omega$ be a bounded domain, 
	$s\in(0,1), p\in(1,\infty),$ $h\in\mathcal{A},$ $\lambda>0$
	and $u$ be a weak super-solution of  \eqref{eq:epf} 
	such that $u\geq 0$ in $B_R(x_0)\subset\subset\Omega.$
	Then for any $B_r=B_r(x_0)\subset B_{\nicefrac{R}2}(x_0)$ and 
	$0<\delta<1$
	\begin{align*}
		\int_{B_r}\!\!\int_{B_r}&\left|
		\log\left(\dfrac{u(x)+\delta}{u(y)+\delta}\right)\right|^p
		\dfrac{dxdy}{|x-y|^{n+sp}}\le\\
		&\le Cr^{n-sp}
		\left\{\delta^{1-p}r^{sp}\int_{\R^n\setminus B_{2r}}
		\dfrac{u^-(y)^{p-1}}{|y-x_0|^{n+sp}}\, dy+1
		\right\}+C\lambda\|h\|_{L^1(B_{2r})},
	\end{align*}
	where $u^-=\max\{-u,0\}$ and $C$ depends only on $n,s,$ 
	and $p.$
\end{lem}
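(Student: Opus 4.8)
The plan is to follow the Di Castro--Kuusi--Palatucci argument of \cite[Lemma 1.3]{di2014local}, the only genuinely new ingredient being the control of the weighted lower-order term; the $\K(1-s)$ normalisation will simply be absorbed into the final constant. Fix $B_{2r}=B_{2r}(x_0)$; since $r\le R/2$ we have $B_{2r}\subset B_R(x_0)\subset\subset\Omega$, so $u\ge0$ on $B_{2r}$ and $u^-$ is supported in $\R^n\setminus B_R\subset\R^n\setminus B_{2r}$. Choose $\psi\in C_0^\infty(B_{3r/2})$ with $0\le\psi\le1$, $\psi\equiv1$ on $B_r$ and $|\nabla\psi|\le c/r$, set $w=u+\delta$ (so $w\ge\delta>0$ on $\supp\psi\subset B_{2r}$), and take $\varphi=w^{1-p}\psi^p$ as a test function. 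Since $0\le\varphi\le\delta^{1-p}$ and $\psi^p\in C_0^\infty(\Omega)$, one checks that $\varphi\in\twsp$ with $\varphi\ge0$, so the weak super-solution inequality for \eqref{eq:epf} gives
\[
	\K(1-s)\,\mathcal H_{s,p}(u,\varphi)\ \ge\ \lambda\int_{\Omega}h(x)\,|u(x)|^{p-2}u(x)\,\varphi(x)\,dx .
\]
Because $\supp\varphi\subset B_{2r}$, where $u\ge0$, the right-hand side equals $\lambda\int_{B_{2r}}h\,\big(u/(u+\delta)\big)^{p-1}\psi^p\,dx$, whose absolute value is at most $\lambda\|h\|_{L^1(B_{2r})}$; this is the whole contribution of the eigenvalue term.

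It then remains to bound $\mathcal H_{s,p}(u,\varphi)$, as in \eqref{eq:H}, from above. Writing $\mathcal H_{s,p}(u,\varphi)=\mathcal H_1+2\mathcal H_2$ with $\mathcal H_1$ the integral over $B_{2r}\times B_{2r}$ and $\mathcal H_2$ the integral over $B_{2r}\times(\R^n\setminus B_{2r})$ (the integrand vanishes when both variables leave $B_{3r/2}$, hence when both leave $B_{2r}$; in $\mathcal H_2$ one also uses $\varphi(y)=0$), one estimates $\mathcal H_1$ by the DKP pointwise inequality: there is $C=C(p)$ such that for all $a,b>0$ and $\sigma,\tau\in[0,1]$,
\[
	|a-b|^{p-2}(a-b)\big(\sigma^p a^{1-p}-\tau^p b^{1-p}\big)\ \le\ -\tfrac1C\big|\log(a/b)\big|^p\max\{\sigma,\tau\}^p+C\,|\sigma-\tau|^p,
\]
applied with $a=w(x)$, $b=w(y)$, $\sigma=\psi(x)$, $\tau=\psi(y)$. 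Using $\psi\equiv1$ on $B_r$ this yields
\[
	\mathcal H_1\ \le\ -\tfrac1C\int_{B_r}\!\int_{B_r}\frac{|\log(w(x)/w(y))|^p}{|x-y|^{n+sp}}\,dxdy\ +\ C\int_{B_{2r}}\!\int_{B_{2r}}\frac{|\psi(x)-\psi(y)|^p}{|x-y|^{n+sp}}\,dxdy,
\]
and the last integral is bounded by $C(n,s,p)\,r^{n-sp}$ via the usual cutoff-energy computation (split $|x-y|<r$ and $|x-y|\ge r$ and use $|\psi(x)-\psi(y)|\le\min\{1,c|x-y|/r\}$).

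For the long-range term, monotonicity of $t\mapsto|t|^{p-2}t$, together with $w(x)-w(y)=u(x)-u(y)\le w(x)+u^-(y)$ and $(a+b)^{p-1}\le2^{p-1}(a^{p-1}+b^{p-1})$, gives, for $x\in\supp\varphi$ and $y\notin B_{2r}$,
\[
	|w(x)-w(y)|^{p-2}(w(x)-w(y))\ \le\ 2^{p-1}\big(w(x)^{p-1}+u^-(y)^{p-1}\big).
\]
Inserting this, using $\varphi(x)\,w(x)^{p-1}=\psi(x)^p\le1$ and $\varphi(x)\le\delta^{1-p}$, and the elementary bounds $\int_{\R^n\setminus B_{2r}}|x-y|^{-n-sp}\,dy\le C(n)(sp)^{-1}r^{-sp}$ and $|x-y|\ge\tfrac14|y-x_0|$ for $x\in B_{3r/2}$, $y\notin B_{2r}$, one obtains
\[
	\mathcal H_2\ \le\ \frac{C(n,p)}{s}\,r^{n-sp}+C(n,p)\,\delta^{1-p}r^{n}\int_{\R^n\setminus B_{2r}}\frac{u^-(y)^{p-1}}{|y-x_0|^{n+sp}}\,dy .
\]
Collecting the three estimates, dividing the super-solution inequality by $\K(1-s)$, isolating the logarithmic double integral, and absorbing $\K(1-s)$ together with all $s$-dependent factors into a single constant $C=C(n,s,p)$ gives exactly the asserted bound. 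The only delicate step is the pointwise inequality used for $\mathcal H_1$ (equivalently $(t-1)^{p-1}(1-t^{1-p})\ge\tfrac1C(\log t)^p$ for $t\ge1$, checked separately for $t$ near $1$ and $t$ large) together with the bookkeeping of the cutoff-energy and tail integrals; the weighted right-hand side contributes only the harmless term $C\lambda\|h\|_{L^1(B_{2r})}$.
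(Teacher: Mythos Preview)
Your argument is correct and is exactly the approach of the paper: test with $(u+\delta)^{1-p}\phi^p$, control the weighted right-hand side by $\lambda\|h\|_{L^1(B_{2r})}$ via $0\le u^{p-1}(u+\delta)^{1-p}\phi^p\le1$, and then invoke the Di Castro--Kuusi--Palatucci estimate for $\mathcal H_{s,p}(u,(u+\delta)^{1-p}\phi^p)$. The only difference is cosmetic: the paper simply cites \cite[Lemma~1.3]{di2014local} for the bound on $\mathcal H_{s,p}$, whereas you unpack that reference by writing out the local/nonlocal decomposition $\mathcal H_1+2\mathcal H_2$, the pointwise logarithmic inequality, and the tail estimate explicitly.
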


\begin{proof}
	Let $0<r<\nicefrac{R}{2},$ $0<\delta$ and 
	$\phi\in C_0^\infty( B_{\nicefrac{3r}2})$ be such that 	
	\[
  		0\le \phi \le 1, \quad \phi\equiv1 \text{ in } B_r \quad \text
  		{ and } \quad|D\phi|<Cr^{-1} 
  		\text{ in } B_{\nicefrac{3r}2}\subset B_{R}. 
	\]
	Taking $v=(u+\delta)^{1-p}\phi^p$ as test function in 
	\eqref{eq:debil} we have that
	 \[
  		  	\lambda
  		  	\int_{B_{\nicefrac{3r}2}} h(x) \frac{u(x)^{p-1}}{(u(x)+\delta)^{p-1}} 
  		  	\phi(x)^p\, dx \le \K(1-s)
  		  	\mathcal{H}_{s,p}(u,(u+\delta)^{1-p}\phi^p). 
  	\]
	Then,using that $0\le u^{p-1}(u+\delta)^{1-p}\phi^p\le1$ in 
	$B_{\nicefrac{3r}{2}},$ 
	\begin{equation}\label{eq:DKP}
 		 \begin{aligned}
  		  	0\le  \K(1-s)
  		  	\mathcal{H}_{s,p}(u,(u+\delta)^{1-p}\phi^p)
  		  	+\lambda\|h\|_{L^1(B_{2r})}. 
  		\end{aligned}
	\end{equation}
	In the proof of Lemma 1.3 in \cite{di2014local}, it is shown that
	\begin{align*}
	  \mathcal{H}_{s,p}(u,(u+\delta)^{1-p}\phi^p) \le 
	  &\ Cr^{n-sp} \left\{\delta^{1-p}r^{sp}\int_{\R^n\setminus B_{2r}} 	
	  \dfrac{u{-}(y)^{p-1}}{|y-x_0|^{n+sp}}\, d y+1 \right\}\\
 		& -\int_{B_r} \int_{B_r} \left| 
 		\log\left(\dfrac{u(x)+\delta}{u(y)+\delta}\right)\right|^p 
 		 \dfrac{dxdy}{|x-y|^{n+sp}},
	\end{align*}
	where $C$ depends only on $n,s,$ and $p.$ Then, by \eqref{eq:DKP},
	the lemma holds.\qed
	\end{proof}

	To prove of the next theorem, we adapt the proof of
	Theorem A.1 in \cite{BF}.
	
\begin{teo}\label{teo:stm} 
	Let $\Omega$ be a bounded domain, 
	$s\in(0,1), p\in(1,\infty),$ $h\in \mathcal{A},$  $\lambda>0,$
	and $u$ be a weak super-solution of  \eqref{eq:epf} 
	such that $u\geq 0$ in $\Omega.$ If $u\neq0$ in $\Omega$ then
	$u>0$ a.e. in $\Omega.$
\end{teo}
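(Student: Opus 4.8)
The plan is to run a standard nonlocal iteration argument à la De Giorgi--Moser, bootstrapped by the logarithmic estimate of Lemma~\ref{lem:DKP}. First I would argue by contradiction: suppose the set $Z=\{x\in\Omega\colon u(x)=0\}$ has positive measure. Since $u\geq 0$ and $u\not\equiv 0$, there must be a ball $B_R(x_0)\subset\subset\Omega$ on which $u$ is ``partially degenerate'', i.e.\ $|Z\cap B_{R/2}(x_0)|>0$ while $u$ is not a.e.\ zero on $B_R(x_0)$; one can moreover choose $x_0$ to be a Lebesgue density point of $Z$, so that $|Z\cap B_r(x_0)|/|B_r|\to 1$ as $r\to 0$. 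The key point is that because $u\geq 0$ in all of $\Omega$ and equals zero outside $\Omega$, the term $u^-(y)=\max\{-u(y),0\}$ vanishes identically on $\R^n$; hence the right-hand side of Lemma~\ref{lem:DKP} reduces to $Cr^{n-sp}+C\lambda\|h\|_{L^1(B_{2r})}\le Cr^{n-sp}(1+\lambda\|h\|_{L^\infty})$ for $B_{2r}\subset\subset\Omega$.

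Next I would exploit this clean logarithmic bound. Set $w_\delta=\log\big((u+\delta)/\delta\big)\ge 0$. Lemma~\ref{lem:DKP} gives
\[
	\int_{B_r}\!\!\int_{B_r}\frac{|w_\delta(x)-w_\delta(y)|^p}{|x-y|^{n+sp}}\,dx\,dy\le C r^{n-sp},
\]
with $C$ independent of $\delta$. On the other hand, on the degeneracy set $Z\cap B_r$ we have $w_\delta\equiv 0$, while on $\{u\ge\delta\}\cap B_r$ we have $w_\delta\ge\log 2$. A fractional Poincar\'e/Sobolev inequality on the ball $B_r$ --- controlling $\|w_\delta-(w_\delta)_{B_r}\|_{L^p(B_r)}$ (or the truncated version) by the Gagliardo seminorm --- then forces
\[
	\big|\{x\in B_r\colon u(x)\ge\delta\}\big|\cdot\big|Z\cap B_r\big|\le C\,|B_r|\,r^{sp}\!\!\int_{B_r}\!\!\int_{B_r}\frac{|w_\delta(x)-w_\delta(y)|^p}{|x-y|^{n+sp}}\,dx\,dy\le C\,r^{n+sp}.
\]
Dividing by $|B_r|^2\sim r^{2n}$ and using that $|Z\cap B_r|/|B_r|\to 1$, we obtain $|\{u\ge\delta\}\cap B_r|/|B_r|\le C r^{sp}\to 0$ as $r\to 0$, for \emph{every} fixed $\delta>0$. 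Letting $\delta\to 0$ this says that at the density point $x_0$ of $Z$, the set $\{u>0\}$ also has density $0$; since $\{u>0\}\cap B_R(x_0)$ has positive measure this is not yet a contradiction by itself, so I would instead iterate: shrinking $B_R$ appropriately and combining with a De Giorgi level-set iteration (using the Caccioppoli inequality obtained by testing \eqref{eq:debil} with truncations $(u-k)^-\phi^p$, exactly as in \cite{di2014local,BF}) to propagate positivity from a set of positive measure to a full neighborhood.

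Concretely, the cleanest route --- and the one I would actually write --- is to invoke the nonlocal weak Harnack / infimum estimate that follows from Lemma~\ref{lem:DKP} together with the Caccioppoli inequality: there is a constant $C$ and an exponent $\varepsilon>0$ such that for $B_{2r}\subset\subset\Omega$,
\[
	\Big(\,\pint_{B_r}u^\varepsilon\,dx\Big)^{1/\varepsilon}\le C\Big(\essinf_{B_r}u+r^{\frac{sp}{p-1}}(1+\lambda\|h\|_\infty)^{\frac1{p-1}}\Big),
\]
or, in the form we need, that if $u\ge 0$ in $\Omega$, $u\not\equiv 0$, and $u$ is a weak supersolution with $\lambda>0$, then $\essinf_{B_r}u>0$ on every ball $B_{2r}\subset\subset\Omega$ for which $u$ is not a.e.\ zero --- and then a connectedness/covering argument over $\Omega$ (chaining overlapping balls, using that $\Omega$ is a domain hence connected) upgrades this to $u>0$ a.e.\ in $\Omega$. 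The main obstacle is the passage from the logarithmic estimate to the genuine lower bound on $\essinf u$: this requires either reproducing the full nonlocal De Giorgi iteration of \cite{di2014local} in the presence of the extra bounded term $\lambda\|h\|_{L^1}$ (harmless, since it is an absolute constant on fixed balls and can be absorbed), or --- more economically --- citing the measure-to-pointwise propagation lemma already available there. I would take the latter route, noting explicitly that the hypotheses of the cited logarithmic lemma and Caccioppoli inequality hold with $u^-\equiv 0$, so all the ``tail'' terms that usually complicate nonlocal arguments disappear, and the classical chain-of-balls argument closes the proof.
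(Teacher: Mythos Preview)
Your proposal assembles exactly the right ingredients --- the logarithmic estimate of Lemma~\ref{lem:DKP}, the observation that $u^-\equiv 0$ on $\R^n$ so the tail term vanishes, a Poincar\'e-type inequality exploiting that $w_\delta=\log\big((u+\delta)/\delta\big)$ vanishes on $Z$, and a final chain-of-balls argument over the connected set $\Omega$. This is precisely the skeleton of the paper's proof, which follows \cite{BF}. But you miss the clean way to close the argument, and your first attempt contains a genuine error.

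In your density-point computation, the product $|B_r|\,r^{sp}$ times the seminorm bound $Cr^{n-sp}$ gives $Cr^{2n}$, not $Cr^{n+sp}$; after dividing by $|B_r|^2\sim r^{2n}$ you obtain only $\frac{|\{u\ge\delta\}\cap B_r|}{|B_r|}\cdot\frac{|Z\cap B_r|}{|B_r|}\le C$, which does \emph{not} tend to zero as $r\to 0$. So the $r\to0$ route is a dead end, as you half-suspected. The paper avoids this entirely by keeping $r$ \emph{fixed} and letting $\delta\to 0$ instead. Concretely: pick any ball $B_{r/2}(x_i)$ with $|Z\cap B_{r/2}(x_i)|>0$; the Poincar\'e-type inequality from \cite{BF} (using that $w_\delta$ vanishes on $Z\cap B_{r/2}$) gives
\[
	\int_{B_{r/2}(x_i)}\big|\log\big(1+\tfrac{u(x)}{\delta}\big)\big|^p\,dx
	\;\le\;\frac{r^{n+sp}}{|Z\cap B_{r/2}(x_i)|}\,[w_\delta]^p_{W^{s,p}(B_{r/2})}
	\;\le\;\frac{C}{|Z\cap B_{r/2}(x_i)|}\max\{r^{2n},r^{n+sp}\},
\]
with the right-hand side \emph{independent of $\delta$}. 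Since $\log(1+u(x)/\delta)\to\infty$ as $\delta\to 0$ at every point where $u(x)>0$, Fatou forces $u\equiv 0$ a.e.\ on $B_{r/2}(x_i)$. Now the overlapping-balls chain propagates $u\equiv 0$ to all of the compact connected set $K$, contradicting $u\not\equiv 0$ on $K$.

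Your fallback to a full nonlocal weak Harnack inequality would also work, but it is considerably heavier than needed: the single passage $\delta\to 0$ above replaces the entire De Giorgi iteration you propose.
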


\begin{proof}
	We start proving that if $K\subset\subset\Omega$ is a compact connected 
	set such that $u\not\equiv 0$ then $u>0$ a.e. in $K.$
	
	Since $K\subset\subset \Omega$ and $K$ is compact then there 
	exist $r\in(0,1)$ and $x_1,\dots,x_k \in K$ such that
	\[
		K\subset\{x\in\Omega\colon \dist(x,\partial\Omega)>2r\},
		\quad K\subset \bigcup_{j=1}^k B_{\nicefrac{r}2}(x_i),
		\quad
		\mbox{ and }\quad 
	\]
	\begin{equation}\label{eq:totalmenteauxiliar}
			|B_{\nicefrac{r}2}(x_i)\cap B_{\nicefrac{r}2}(x_i+1)|>0
		\quad \forall i\in\{1,\dots,k-1\}.
	\end{equation}
	Suppose, to the contrary, $|\{x\in K\colon u(x)=0\}|>0.$ Then
	there exists $i\in\{1,\dots,k\}$ such that 
	$Z=|\{x\in K\colon u(x)=0\}\cap B_{\nicefrac{r}2}(x_i)|$ has 
	positive measure.
	
	Given $\delta>0,$ in the proof of Theorem A.1 in \cite{BF},
	it is shown that
	\begin{align*}
		\int_{B_{\nicefrac{r}2}(x_i)}&
		\left|\log\left(1+\dfrac{u(x)}\delta\right)\right|^p \, dx\le\\
		&\le \dfrac{r^{n+sp}}{|Z|}\int_{B_{\nicefrac{r}2}(x_i)}
		\int_{B_{\nicefrac{r}2}(x_i)}\left|
		\log\left(\dfrac{u(x)+\delta}{u(y)+\delta}\right)\right|^p
		\dfrac{dxdy}{|x-y|^{n+sp}}.
	\end{align*}
	Then, by Lemma \ref{lem:DKP},
	\[
		\int_{B_{\nicefrac{r}2}(x_i)}
		\left|\log\left(1+\dfrac{u(x)}\delta\right)\right|^p \, dx\le
		\dfrac{C}{|Z|}\max\{r^{2n},r^{n+sp}\}
	\]
	with $C$ independent of $\delta.$ Then, passing to the limit as 
	$\delta$ goes to 0, we have that $u\equiv 0$ in $B_{\nicefrac{r}2}(x_i).$
	Thus, proceeding as in  the proof of Theorem A1 in
	\cite{BF} we can conclude that $u\equiv0$ in $K,$	that is
	a contradiction.
	
	Proceeding as in  the proof of Theorem A1 in
	\cite{BF} we can conclude the general case.
	\qed
\end{proof}

\begin{co}
	\label{co:bf1}
	Let $\Omega\subset\R^n$ be a bounded domain with Lipschitz 
     boundary. Let $s\in(0,1),$ $p\in(1,\infty),$ $h\in \mathcal{A},$
     and $u\in\twsp$ be a non-negative eigenfunction corresponding  to 
      $\lambda_1(s,p,h).$ Then $u>0$ almost everywhere in $\Omega.$
\end{co}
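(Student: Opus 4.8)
The plan is to obtain the conclusion as a direct consequence of the strong minimum principle in Theorem~\ref{teo:stm}, the only preliminary point being the strict positivity of the eigenvalue $\lambda_1(s,p,h)$.

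First I would record that $\lambda_1(s,p,h)>0$. If $u\in\twsp$ satisfies the normalization $\int_{\Omega}h(x)|u(x)|^p\,dx=1$, then, using $h\in L^\infty(\Omega)$ and the fractional Poincar\'e inequality of Lemma~\ref{lem:poincare},
\[
	1=\int_{\Omega}h(x)|u(x)|^p\,dx\le\|h\|_{L^\infty(\Omega)}\,\|u\|_{L^p(\Omega)}^p
	\le\|h\|_{L^\infty(\Omega)}\,\frac{sp\,|\Omega|^{\frac{sp}{n}}}{2\,\omega_n^{\frac{sp}{n}+1}}\,|u|_{\wspr}^p,
\]
so $|u|_{\wspr}^p$ stays above a fixed positive constant on the constraint set; taking the infimum in \eqref{eq:RQ} and recalling $\K(1-s)>0$ gives $\lambda_1(s,p,h)>0$. (Equivalently, a minimizer as in Theorem~\ref{teo:LL} with vanishing Gagliardo seminorm would be constant, hence identically zero in $\twsp$, contradicting the normalization.)

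Next I would unwind the definitions. By hypothesis $u$ is a non-negative eigenfunction corresponding to $\lambda_1(s,p,h)$, i.e.\ a non-trivial element of $\twsp$ satisfying \eqref{eq:debil} with $\lambda=\lambda_1(s,p,h)$. Restricting the test functions there to $v\in\twsp$ with $v\ge0$ turns the identity into the inequality in the definition of a weak super-solution, so $u$ is a weak super-solution of \eqref{eq:epf} with $\lambda=\lambda_1(s,p,h)>0$, with $u\ge0$ in $\Omega$ and $u\not\equiv0$ in $\Omega$. Applying Theorem~\ref{teo:stm} then yields $u>0$ almost everywhere in $\Omega$, which is the assertion. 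I do not anticipate any genuine obstacle: the argument is essentially an invocation of Theorem~\ref{teo:stm}, and the only computation needed — the strict positivity of $\lambda_1(s,p,h)$ — is an immediate consequence of the boundedness of $h$ together with the fractional Poincar\'e inequality.
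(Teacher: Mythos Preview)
Your proposal is correct and follows exactly the route intended by the paper: the corollary is an immediate application of the strong minimum principle, Theorem~\ref{teo:stm}, to the non-negative eigenfunction. Your extra care in verifying $\lambda_1(s,p,h)>0$ via Lemma~\ref{lem:poincare} is appropriate, since Theorem~\ref{teo:stm} requires $\lambda>0$ and the paper only records this positivity explicitly in the case $h\equiv1$.
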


Observe that, if $u$ is an eigenfunction corresponding  to 
$\lambda_1(s,p,h),$ then either $hu_+\not\equiv0$ or $hu_-\not\equiv0,$
where $u_+=\max\{u,0\}$ and $u_-=\max\{u,0\}.$ 

On the other hand, for any function $v\colon \R^N\to\R$
\begin{equation}
  \label{eq:u+u-}
  \begin{aligned}
     |v_+(x)-v_+(y)|^p&\le |v(x)-v(y)|^{p-2}(v(x)-v(y))(v_+(x)-v_+(y)),\\
    |v_-(x)-v_-(y)|^p&\le -|v(x)-v(y)|^{p-2}(v(x)-v(y))(v_-(x)-v_-(y)),
  \end{aligned}
\end{equation}
for all $x,y\in\R^n.$ Therefore, if $hu_+\not\equiv0$  then $u_+$ is an eigenfunction corresponding  to 
$\lambda_1(s,p,h).$ Moreover, by Corollary \ref{co:bf1}, $u_+>0$ almost everywhere in $\Omega.$

We similarly deduce that if $hu_-\not\equiv0$ then $u_->0$ almost everywhere in $\Omega.$
Then the next result is proved.

\begin{co}
	\label{co:bf2}
      Let $\Omega\subset\R^n$ be a bounded domain with Lipschitz 
     boundary, $s\in(0,1),$ $p\in(1,\infty),$ and $h\in \mathcal{A}.$
     If $u\in\twsp$ is an eigenfunction corresponding  to 
      $\lambda_1(s,p,h),$ then either $u>0$ or $u<0$ almost everywhere in $\Omega.$
\end{co}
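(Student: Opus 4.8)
The plan is to reduce the statement to the strong minimum principle of Corollary \ref{co:bf1} applied to $u_+$ or to $u_-$, exploiting the two facts recorded just before the statement: that an eigenfunction $u$ for $\lambda_1(s,p,h)$ cannot have both $hu_+$ and $hu_-$ trivial against the weight, and that, via the convexity inequalities \eqref{eq:u+u-}, whichever of $u_\pm$ is nontrivial against $h$ is itself a non-negative eigenfunction for $\lambda_1(s,p,h)$.

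First I would fix an eigenfunction $u\in\twsp$ associated with $\lambda_1(s,p,h)$ and test \eqref{eq:debil} with $v=u$; since $\K(1-s)|u|_{\wspr}^p>0$ and $\lambda_1(s,p,h)>0$, this forces $\int_\Omega h|u|^p\,dx>0$, and as $|u|^p=u_+^p+u_-^p$ pointwise, at least one of $\int_\Omega h u_+^p\,dx$ and $\int_\Omega h u_-^p\,dx$ is strictly positive. Assume, after possibly replacing $u$ by $-u$, that $\int_\Omega h u_+^p\,dx>0$. Testing \eqref{eq:debil} with $v=u_+$, using the first inequality in \eqref{eq:u+u-} and the pointwise identity $|u|^{p-2}u\,u_+=u_+^p$, I obtain
\[
\K(1-s)|u_+|_{\wspr}^p\le\K(1-s)\mathcal{H}_{s,p}(u,u_+)=\lambda_1(s,p,h)\int_\Omega h u_+^p\,dx .
\]
Dividing by $\int_\Omega h u_+^p\,dx$ and comparing with the infimum in \eqref{eq:RQ} yields the reverse inequality, so the suitably normalised $u_+$ is a minimiser of \eqref{eq:RQ}; the Lagrange multiplier argument from the proof of Theorem \ref{teo:LL} then shows that $u_+$ is a non-negative eigenfunction for $\lambda_1(s,p,h)$.

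Now Corollary \ref{co:bf1} gives $u_+>0$ almost everywhere in $\Omega$, and since $u_+(x)\,u_-(x)=0$ for every $x$ this forces $u_-=0$ a.e., hence $u=u_+>0$ a.e. in $\Omega$; in the symmetric case $\int_\Omega h u_-^p\,dx>0$ one analogously gets $u=-u_-<0$ a.e. in $\Omega$, which is the claim. No real obstacle remains: the two substantive inputs --- the logarithmic estimate behind the strong minimum principle (Lemma \ref{lem:DKP}, Theorem \ref{teo:stm}, Corollary \ref{co:bf1}) and the splitting inequalities \eqref{eq:u+u-} --- are already established, and the only mildly delicate point is the passage from ``$hu_\pm\not\equiv0$'' to ``$\int_\Omega h u_\pm^p\,dx>0$'', which the test with $v=u$ takes care of.
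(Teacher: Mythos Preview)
Your proposal is correct and follows essentially the same route as the paper: use the inequalities \eqref{eq:u+u-} to show that whichever of $u_+$, $u_-$ is nontrivial against the weight is itself a minimiser of \eqref{eq:RQ}, hence a non-negative eigenfunction for $\lambda_1(s,p,h)$, and then invoke Corollary~\ref{co:bf1}. The only difference is cosmetic: the paper phrases the dichotomy as ``$hu_+\not\equiv 0$ or $hu_-\not\equiv 0$'' and leaves implicit the step $\int_\Omega h u_\pm^p\,dx>0$, whereas you make it explicit via the test $v=u$.
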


The proof of  the result given below follows from a careful reading  of
\cite[proof of Theorem 3.2]{FP}.

\begin{teo}\label{teo:linf}
	Let $\Omega$ be a bounded domain with Lipschitz boundary, 
	$s\in(0,1), p\in(1,\infty),$ $h\in \mathcal{A},$  $\lambda>0,$
	and $u\in\twsp$ be a weak solution to
	\eqref{eq:epf}. Then $u\in L^\infty(\R^n).$
\end{teo}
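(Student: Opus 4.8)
The plan is to run a Moser-type iteration, following \cite[proof of Theorem 3.2]{FP}. First I would make two reductions. Since $u=0$ in $\R^n\setminus\Omega$, it suffices to show $u\in L^\infty(\Omega)$; and since $-u$ is again a weak solution of \eqref{eq:epf} (because $(-\Delta)^s_p(-u)=-(-\Delta)^s_p u$ and $|{-u}|^{p-2}(-u)=-|u|^{p-2}u$), it is enough to bound $u_+=\max\{u,0\}$ from above, the bound on $u_-$ following by applying the argument to $-u$. If $sp>n$ the conclusion is immediate from Corollary~\ref{co:incont}, so I assume $sp\le n$ and write the argument for $sp<n$ (the case $sp=n$ is identical after replacing $p_s^\star$ by any fixed finite exponent larger than $p$). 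Note that by Theorem~\ref{teo:incluar} and Theorem~\ref{teo:contincr} one has $u\in L^{p_s^\star}(\R^n)$, which provides the starting integrability of the iteration.

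The heart of the proof is a Caccioppoli-type estimate. Fix $\beta\ge1$ and $M>0$, write $w=u_+$, $w_M=\min\{w,M\}$, and take
\[
v=w\,w_M^{p(\beta-1)}
\]
as test function in \eqref{eq:debil}; this is admissible because $v\ge0$ and $v$ is the composition of $u_+\in\twsp$ with a globally Lipschitz function vanishing at $0$, hence $v\in\twsp$. The key input is the purely algebraic monotonicity inequality used in \cite{di2014local,FP}: there exist $c=c(p)>0$ and an exponent $\theta=\theta(p)$ such that, with $G(t)=t_+\min\{t_+,M\}^{\beta-1}$,
\[
|a-b|^{p-2}(a-b)\bigl(g(a)-g(b)\bigr)\ \ge\ \frac{c}{\beta^{\theta}}\,\bigl|G(a)-G(b)\bigr|^p\qquad\text{for all }a,b\in\R,
\]
where $g$ is the increasing function with $v=g(u)$. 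Applying this with $a=u(x)$, $b=u(y)$, dividing by $|x-y|^{n+sp}$ and integrating over $\R^n\times\R^n$ shows, with $U:=G(u)=w\,w_M^{\beta-1}\in\twsp$, that $\mathcal{H}_{s,p}(u,v)\ge c\beta^{-\theta}|U|_{\wspr}^p$. Since $|u|^{p-2}u\,v=U^p$ a.e.\ (it equals $w^p w_M^{p(\beta-1)}$ on $\{u>0\}$ and $0$ elsewhere) and $h\in L^\infty(\Omega)$, the weak formulation \eqref{eq:debil} gives $\K(1-s)|U|_{\wspr}^p\le c^{-1}\lambda\|h\|_{L^\infty(\Omega)}\,\beta^{\theta}\int_\Omega U^p\,dx$. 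Combining this with the fractional Sobolev inequality (Theorem~\ref{teo:mazya}, which carries the matching factor $s(1-s)$) yields $\|U\|_{L^{p_s^\star}(\R^n)}^p\le C\beta^{\theta}\int_\Omega U^p\,dx$ with $C=C(n,p,s,\lambda,\|h\|_{L^\infty},\Omega)$. Since $U\le w^\beta$, letting $M\to\infty$ and using monotone convergence on both sides gives, whenever $w\in L^{p\beta}(\Omega)$,
\[
\|w\|_{L^{p_s^\star\beta}(\Omega)}\ \le\ \bigl(C\beta^{\theta}\bigr)^{1/(p\beta)}\,\|w\|_{L^{p\beta}(\Omega)}.
\]

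The iteration is now routine. Put $\chi:=p_s^\star/p>1$ and $\beta_k:=\chi^{k}$, so that $p\beta_{k+1}=p_s^\star\beta_k$. Starting from $w=u_+\in L^{p_s^\star}(\Omega)=L^{p\beta_1}(\Omega)$, the displayed estimate gives inductively $w\in L^{p\beta_{k+1}}(\Omega)$ with
\[
\|w\|_{L^{p\beta_{k+1}}(\Omega)}\ \le\ \Bigl(\prod_{j=1}^{k}\bigl(C\chi^{\theta j}\bigr)^{1/(p\chi^{j})}\Bigr)\,\|w\|_{L^{p_s^\star}(\Omega)}.
\]
Because $\sum_{j\ge1}\chi^{-j}<\infty$ and $\sum_{j\ge1}j\chi^{-j}<\infty$, the infinite product converges to a finite constant $C_\ast$, while $p\beta_{k+1}\to\infty$; letting $k\to\infty$ yields $\|u_+\|_{L^\infty(\Omega)}\le C_\ast\|u_+\|_{L^{p_s^\star}(\Omega)}<\infty$. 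Applying the same to $-u$ bounds $u_-$, whence $u\in L^\infty(\R^n)$.

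I expect the main obstacle to be the second step: establishing the Caccioppoli estimate with a constant that depends on $\beta$ only \emph{polynomially} (this is what makes the iteration product converge), which requires the sharp form of the algebraic inequality together with careful treatment of the nonlocal interaction — in particular, inserting the truncation $w_M$ into the test function so that $v$ is admissible and the estimate is $M$-independent, removing $M$ afterwards, and checking that the contributions from $\{u\le0\}$ and from $\R^n\setminus\Omega$ enter with the right sign. Everything else is the standard De Giorgi--Moser bookkeeping.
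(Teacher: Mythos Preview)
Your proposal is correct and follows exactly the approach the paper indicates: the paper does not give an independent proof but simply states that the result ``follows from a careful reading of \cite[proof of Theorem 3.2]{FP}'', which is precisely the Moser iteration you outline (with the only new ingredient being the bounded weight $h\in L^\infty(\Omega)$, which you correctly absorb into the constant). Your handling of the constant $\K(1-s)$ in the weak formulation against the matching factor in Theorem~\ref{teo:mazya}, and of the truncation $w_M$ to make the test function admissible, are the right details to flag.
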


Now, we prove that $\lambda_1(s,p,h)$ is also simple when $h\not\equiv1.$
For this we need the following lemma. For the proof see Lemma 6.2  in 
\cite{Amghibech}.

\begin{lem}\label{lem:laux1}  
	Let $p\in(1,\infty).$
	For $v>0$ and $u\ge0,$ we have
	\[
		L(u,v)\ge0 \quad \mbox{in } \R^n\times\R^n
	\]
	where
	\[
			L(u,v)(x,y)=
		   |u(y)-u(x)|^p-|v(y)-v(x)|^{p-2}(v(y)-v(x))
		   \left(\dfrac{u(y)^p}{v(y)^{p-1}}-
		   \dfrac{u(x)^p}{v(x)^{p-1}}\right). 
	\]
	The equality holds a.e in $\R^n\times \R^n$
	if and only if $u=kv$ a.e. in $\R^n$ for some constant $k.$
\end{lem}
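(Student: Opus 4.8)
The plan is to reduce the two-variable inequality $L(u,v)(x,y)\ge 0$ to a pointwise inequality in four real parameters, and then to prove that scalar inequality by convexity. Write $a=u(x)\ge 0$, $b=u(y)\ge 0$, $c=v(x)>0$, $d=v(y)>0$; the claim becomes
\[
 |b-a|^p \ge |d-c|^{p-2}(d-c)\left(\frac{b^p}{d^{p-1}}-\frac{a^p}{c^{p-1}}\right).
\]
First I would dispose of the trivial cases $c=d$ (where the right-hand side is nonnegative only through the factor $|d-c|^{p-2}(d-c)=0$, so the inequality is immediate) and then, by the symmetry $(a,c)\leftrightarrow(b,d)$ which flips the sign of both $d-c$ and the bracket, assume without loss of generality $d>c>0$.

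The key step is to recognize the right-hand side as a directional derivative of a convex function. Consider $\Phi(t,\xi)=|\xi|^p$ for $(t,\xi)\in\R\times\R$, or more precisely exploit the known ``hidden convexity'' identity: for fixed $c,d>0$ the map $(a,b)\mapsto$ RHS is related to differentiating $\xi\mapsto|\xi|^p$ along the segment from $(c,d)$ toward $(c\,t,d\,t)$-type rescalings. Concretely, set $w=c\cdot\frac{b}{?}$... — the cleanest route is the substitution used by Lindqvist and by Amghibech: write $\sigma = b/d$, $\tau = a/c$ (both $\ge 0$), factor $|d-c|^{p-2}(d-c)$ out, and reduce to showing
\[
 |b-a|^p \ge |d-c|^{p-1}\bigl(\sigma^p d - \tau^p c\bigr)\quad\text{when } d>c.
\]
Then I would apply the convexity inequality for $\phi(\xi)=|\xi|^p$, namely $\phi(\eta)\ge\phi(\xi)+\phi'(\xi)(\eta-\xi)$, with the clever choice $\xi = d-c$ and $\eta = b-a$, after first checking the algebraic identity $b-a = \sigma d-\tau c$ and bounding $\sigma^p d-\tau^p c$ from above by a linear expression in $\sigma d-\tau c$ using that $t\mapsto t^p$ is convex with $\phi(1)=1$. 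The equality analysis then comes from the strict convexity of $\xi\mapsto|\xi|^p$ (for $p>1$) together with the strict convexity of $t\mapsto t^p$: equality in all the intermediate steps forces $\sigma=\tau$, i.e. $b/d=a/c$, hence $a=kc$, $b=kd$ with $k=\tau\ge 0$; translating back, this is exactly $u=kv$ a.e.

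The main obstacle I expect is handling the full range $p\in(1,\infty)$ uniformly in the convexity estimate $\sigma^p d - \tau^p c \le (\text{linear in }\sigma d-\tau c)$: for $1<p<2$ the function $|\xi|^{p-2}\xi$ is only Hölder continuous at the origin, so one must be careful that the directional-derivative inequality $\phi(\eta)-\phi(\xi)\ge\phi'(\xi)(\eta-\xi)$ is still valid (it is, since $\phi$ is $C^1$ and convex for all $p>1$), and the equality-case argument must invoke strict convexity rather than a second-derivative computation. I would organize the write-up so that the only analytic input is ``$\phi(\xi)=|\xi|^p$ is $C^1$, strictly convex, with $\phi(0)=0$,'' and everything else is the substitution bookkeeping and the case distinction on $\operatorname{sign}(d-c)$.
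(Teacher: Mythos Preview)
The paper does not actually prove this lemma; it simply cites Lemma~6.2 of Amghibech. So the question is whether your sketch stands on its own.

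The general plan --- reduce to a four-parameter scalar inequality and use the convexity of $\phi(\xi)=|\xi|^p$ --- is exactly right, but the specific two-step decomposition you propose fails. After assuming $d>c$ you want to (i) apply the tangent-line inequality for $\phi$ at the point $\xi=d-c$, getting
\[
|b-a|^p \ \ge\ (d-c)^p + p\,(d-c)^{p-1}\bigl((b-a)-(d-c)\bigr),
\]
and then (ii) bound $\sigma^p d-\tau^p c$ from above so that this right-hand side dominates $(d-c)^{p-1}(\sigma^p d-\tau^p c)$. This cannot work: take $p=2$, $a=c=1$, $d=2$, $b=4$ (so $\tau=1$, $\sigma=2$). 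The original inequality reads $9\ge 7$ and holds, but the intermediate bound produced by (i) equals $1+2\cdot 2=5<7$, so no step~(ii) can close the gap. Algebraically, after dividing by $(d-c)^{p-1}$, what (ii) would require is $g_p(\tau)\,c\ge g_p(\sigma)\,d$ with $g_p(t)=t^p-pt+(p-1)\ge 0$; for $\tau=1$ the left side vanishes while the right side is positive whenever $\sigma\ne1$. Note also that convexity of $t\mapsto t^p$ only gives \emph{lower} bounds on $\sigma^p$ and $\tau^p$, not the upper bound on $\sigma^p d-\tau^p c$ your step~(ii) asks for. The ``main obstacle'' you flag (the range $1<p<2$) is not the issue; the choice of tangent point is.

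The repair is to pick a different base point. Applying the tangent-line inequality for $\phi$ at $\xi_0=b(d-c)/d$ (still with $d>c$) and simplifying, the residual inequality becomes
\[
(p-1)(bc)^p \;-\; p\,(ad)\,(bc)^{p-1} \;+\; (ad)^p \ \ge\ 0,
\]
which is precisely Young's inequality (equivalently, convexity of $t\mapsto t^p$ at $t=bc$), with equality iff $ad=bc$, i.e.\ $\sigma=\tau$. This is the computation behind Amghibech's proof; your equality analysis via strict convexity (forcing $u/v$ constant) then goes through unchanged.
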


\begin{teo}\label{teo:autoval1}
	 Let $\Omega\subset\R^n$ be a bounded domain with Lipschitz 
     boundary, $s\in(0,1),$ $p\in(1,\infty),$ $h\in\mathcal{A},$ 
     and $u$ be a 
     positive eigenfunction corresponding to $\lambda_{1}(s,p,h).$
     If $\lambda>0$ is such that there exists 
     a non-negative eigenfunction $v$ 
     of \eqref{eq:op} with
     eigenvalue $\lambda,$ then $\lambda=\lambda_1(s,p,h)$ and
     there exists $k\in\R$ such that $v = ku$ a.e. in $\Omega.$  
\end{teo}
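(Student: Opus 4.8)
The plan is to use the Picone-type inequality of Lemma \ref{lem:laux1} as the central tool, together with the weak formulations of the two eigenvalue problems. First I would record that $u$ is a positive eigenfunction for $\lambda_1(s,p,h)$ and $v\ge 0$ is an eigenfunction for some $\lambda>0$; by Corollary \ref{co:bf2} applied to $v$ (or by Theorem \ref{teo:stm}, since $v$ is in particular a weak super-solution) we may assume $v>0$ a.e.\ in $\Omega$, after noting that $hv\not\equiv 0$. By Theorem \ref{teo:linf} both $u$ and $v$ are in $L^\infty(\R^n)$, which is what makes the forthcoming test functions admissible. The key step is to plug the test function $w=\dfrac{u^p}{(v+\varepsilon)^{p-1}}$ (regularized by $\varepsilon>0$ to keep the denominator away from zero) into the weak formulation \eqref{eq:debil} for $v$, and to compare with the weak formulation for $u$ tested against $u$ itself.

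Concretely, from the equation for $v$ tested with $w$ one gets
\[
	\K(1-s)\,\mathcal{H}_{s,p}\!\left(v,\frac{u^p}{(v+\varepsilon)^{p-1}}\right)
	=\lambda\int_\Omega h(x)\,v(x)^{p-1}\,\frac{u(x)^p}{(v(x)+\varepsilon)^{p-1}}\,dx,
\]
and the integrand on the kernel side is, up to the $(v+\varepsilon)$ versus $v$ discrepancy, exactly the quantity subtracted in $L(u,v)$ of Lemma \ref{lem:laux1}. Hence
\[
	\K(1-s)\,|u|_{\wspr}^p-\lambda\int_\Omega h(x)\,v(x)^{p-1}\,\frac{u(x)^p}{(v(x)+\varepsilon)^{p-1}}\,dx
	\ge \K(1-s)\int_{\R^n}\!\int_{\R^n} L_\varepsilon(u,v)(x,y)\,\frac{dx\,dy}{|x-y|^{n+sp}}\ge 0,
\]
where $L_\varepsilon$ denotes $L$ with $v$ replaced by $v+\varepsilon$. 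Since $u$ is a minimizer, $\K(1-s)|u|_{\wspr}^p=\lambda_1(s,p,h)\int_\Omega h|u|^p\,dx=\lambda_1(s,p,h)$; letting $\varepsilon\to 0^+$ by dominated convergence (legitimate because $u,v\in L^\infty$ and $v>0$ a.e., so $v^{p-1}u^p(v+\varepsilon)^{1-p}\le u^p$) and using $\int_\Omega h|u|^p\,dx=1$ yields $\lambda_1(s,p,h)-\lambda\ge 0$. Combined with the variational characterization $\lambda\ge\lambda_1(s,p,h)$ — which holds because $v$ (normalized) is an admissible competitor in \eqref{eq:RQ} and testing its own equation with $v$ gives $\K(1-s)|v|_{\wspr}^p=\lambda\int_\Omega h|v|^p\,dx$ — we conclude $\lambda=\lambda_1(s,p,h)$.

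Finally, equality $\lambda=\lambda_1(s,p,h)$ forces the Picone remainder to vanish: the chain of inequalities above becomes a chain of equalities in the limit, so $\int\!\int L(u,v)\,|x-y|^{-n-sp}\,dx\,dy=0$, and the equality case of Lemma \ref{lem:laux1} gives $u=kv$ a.e.\ in $\R^n$ for some constant $k$, hence a.e.\ in $\Omega$. The main obstacle I anticipate is the justification of the limiting argument as $\varepsilon\to 0$ and the admissibility of $w=u^p/(v+\varepsilon)^{p-1}$ as an element of $\twsp$: one needs the $L^\infty$ bounds from Theorem \ref{teo:linf} and a careful estimate showing $\mathcal{H}_{s,p}(v,w)$ is finite and that the regularized Picone integrand converges monotonically (or is dominated) so that Fatou/dominated convergence applies on the non-local double integral; this is the place where the boundedness of $h$ and the strict positivity of $v$ (Theorem \ref{teo:stm}) are essential.
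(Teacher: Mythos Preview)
Your proposal is correct and follows essentially the same route as the paper: regularize the denominator via $v+\varepsilon$ (the paper uses $v_m=v+\tfrac1m$), invoke Theorem~\ref{teo:linf} to justify that the Picone test function $u^p/(v+\varepsilon)^{p-1}$ lies in $\twsp$, apply Lemma~\ref{lem:laux1} together with the two weak formulations, and pass to the limit using Fatou and dominated convergence. The paper carries out explicitly the pointwise estimate on $|w_m(x)-w_m(y)|$ that you flagged as the main obstacle, bounding it by a constant times $|u(x)-u(y)|+|v(x)-v(y)|$; also note that Corollary~\ref{co:bf2} as stated only covers eigenfunctions for $\lambda_1$, so your alternative appeal to Theorem~\ref{teo:stm} is the right justification for $v>0$.
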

\begin{proof} 
	Since $\lambda_1(s,p,h)$ is the first eigenvalue we have
	that $\lambda_1(s,p,h)\le\lambda$. 
	Let $m\in\N$ and $v_m\coloneqq v+\dfrac1{m}.$

	We begin by proving that 
	$w_{m}\coloneqq \dfrac{u^{p}}{v_m^{p-1}}\in\twsp.$ 
	It is immediate that  $w_m=0$ in $\R^n\setminus\Omega$ and 
	$w_{m}\in L^{p}(\Omega),$ due to $u\in L^{\infty}(\Omega),$ see
	Theorem \ref{teo:linf}.
 
	On the other hand
	\begin{align*}
		|w_{m}(y)-w_{m}(x)|=&\left|
		\dfrac{u(y)^{p}-u(x)^p}{v_m(y)^{p-1}}
		+
		\dfrac{u(x)^p\left(v_m(x)^{p-1}-v_m(y)^{p-1}\right)}
		{v_m(y)^{p-1}v_m(x)^{p-1}}\right|\\
		\le& m^{p-1}\left|u(y)^{p}-u(x)^p\right|
		+\|u\|_{L^{\infty}(\Omega)}^p
		\dfrac{\left|v_m(y)^{p-1}-v_m(x)^{p-1}\right|}
		{v_m(y)^{p-1}v_m(x)^{p-1}}\\
		\le& m^{p-1}p(u(y)^{p-1}+u(x)^{p-1})|u(y)-u(x)|\\
		&+\|u\|_{L^{\infty}(\Omega)}^p(p-1)
		\dfrac{|v_m(y)^{p-2}+v_m(x)^{p-2}|}
		{v_m(y)^{p-1}v_m(x)^{p-1}}|v_m(y)-v_m(x)|\\
		\le& 2\|u\|_{L^{\infty}(\Omega)}^{p-1}m^{p-1}p|u(y)-u(x)|\\
		&+\|u\|_{L^{\infty}(\Omega)}^p(p-1)m^{p-1}
		\left(\dfrac1{v_m(y)}+\dfrac1{v_m(x)}\right)|v(y)-v(x)|\\
	 	\le& C(m,p,\|u\|_{L^{\infty}(\Omega)})
		\left(|u(y)-u(x)|+|v(y)-v(x)|\right)
	\end{align*} 
	for all $(x,y)\in\R^n\times\R^n.$ Hence 
	$w_{m}\in\twsp$ for all $m\in\N$ due to $u,v\in\twsp.$

	Then, by Lemma \ref{lem:laux1} and since 
	$u,v\in\twsp$ are two positive eigenfunctions of 
	problem \eqref{eq:op} with eigenvalue $\lambda_1(s,p,h)$ and 
	$\lambda$ respectively, we have
	\begin{align*}
    	0\le&
    	\int_{\R^n}\int_{\R^n} 
    	\dfrac{L(u,v_m)(x,y)}{|x-y|^{n+sp}} dxdy\\
    	\le& \int_{\R^n}\int_{\R^n}
    	\dfrac{|u(y)-u(x)|^p}{|x-y|^{n+sp}} dx dy\\ 
    	&-\int_{\R^n}\int_{\R^n}
    	\dfrac{|v(y)-v(x)|^{p-2}(v(y)-v(x))}{|x-y|^{n+sp}}
		\left(\dfrac{u(y)^p}{v_m(y)^{p-1}}-
   		\dfrac{u(x)^p}{v_m(x)^{p-1}}\right)dxdy\\
		\le&\dfrac{1}{\K(1-s)}\left\{ 
		\lambda_{1}(s,p,h)\int_{\Omega}h(x)u(x)^p\,dx-
		\lambda\int_{\Omega}h(x)v(x)^{p-1}
		\dfrac{u(x)^p}{v_m(x)^{p-1}}\, dx\right\}.
	\end{align*}
	By Fatou's lemma and the dominated convergence theorem
	\[
    	\int_{\R^n}\int_{\R^n} \dfrac{L(u,v)(x,y)}{|x-y|^{n+sp}}
    	\, dxdy= 0
	\]
	due to $\lambda_1(s,p,h)\le \lambda.$
	Then $L(u,v)(x,y)=0$ a.e. in $\R^n\times \R^n.$ Hence, by Lemma 
	\ref{lem:laux1}, $u=kv$ for 
	some constant $k>0.$
\qed \end{proof}

By Corollary \ref{co:bf2} and Theorem \ref{teo:autoval1}, we have that
$\lambda_1(s,p,h)$ is simple.

\begin{teo}\label{teo:autovsimple}
	Let $\Omega\subset\R^n$ be a bounded domain with Lipschitz 
     boundary, $s\in(0,1),$ $p\in(1,\infty),$ and $h\in\mathcal{A}.$ 
     Then $\lambda_{1}(s,p)$ is simple.
\end{teo}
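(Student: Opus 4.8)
The plan is to read off the simplicity of $\lambda_1(s,p)$ as an immediate consequence of the results already established for the weighted eigenvalue $\lambda_1(s,p,h)$, specialized to $h\equiv 1$. Since the constant function $1$ belongs to $\mathcal{A}$, Theorem \ref{teo:autoval1} and Corollary \ref{co:bf2} apply verbatim with $h\equiv 1$, and $\lambda_1(s,p,1)=\lambda_1(s,p)$.

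First I would take two arbitrary eigenfunctions $u,v\in\twsp$ of \eqref{eq:epf} (with $h\equiv 1$) associated with $\lambda_1(s,p)$ and show that they are linearly dependent. By Corollary \ref{co:bf2} each of $u$ and $v$ has a fixed sign almost everywhere in $\Omega$; since $(-\Delta)^s_p$ is an odd operator, $-u$ and $-v$ are again eigenfunctions with the same eigenvalue, so after changing signs if necessary I may assume $u>0$ and $v>0$ a.e.\ in $\Omega$. Then I would apply Theorem \ref{teo:autoval1} with the positive eigenfunction $u$ (associated with $\lambda_1(s,p)$) and the non-negative eigenfunction $v$ (associated with the eigenvalue $\lambda=\lambda_1(s,p)>0$): it produces a constant $k\in\R$ such that $v=ku$ a.e.\ in $\Omega$. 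Hence the eigenspace of $\lambda_1(s,p)$ is one-dimensional, which is precisely the assertion that $\lambda_1(s,p)$ is simple.

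There is no serious obstacle in this argument — the substance is already contained in Lemma \ref{lem:laux1}, Theorem \ref{teo:autoval1} and Corollary \ref{co:bf2}. The only point worth stating carefully is the normalization step: one must invoke Corollary \ref{co:bf2} to rule out sign-changing eigenfunctions and the oddness of the operator to replace an a.e.\ negative eigenfunction by an a.e.\ positive one, after which Theorem \ref{teo:autoval1} immediately closes the argument.
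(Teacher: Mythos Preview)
Your proposal is correct and follows exactly the paper's route: the paper deduces the theorem in one line from Corollary~\ref{co:bf2} and Theorem~\ref{teo:autoval1}, and you have simply spelled out the details of that deduction. The only cosmetic point is that the paper's statement (with the hypothesis $h\in\mathcal{A}$) is really intended for $\lambda_1(s,p,h)$ rather than just $\lambda_1(s,p)$, but your argument works verbatim for general $h\in\mathcal{A}$ since Corollary~\ref{co:bf2} and Theorem~\ref{teo:autoval1} are already proved in that generality.
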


Now, we get a lower bound for the measure of the nodal sets.

\begin{lem}\label{lem:autoval3}
	Let $\Omega\subset\R^n$ be a bounded domain with Lipschitz 
     boundary. Let $s\in(0,1),$ 
	$p\in(1,\infty,),$ $s_0\in(0,\min\{\nicefrac{n}{p},s\}),$
	and $h\in\mathcal{A}.$   
	If $u\in\twsp$ is an eigenfunction of 
    \eqref{eq:op} with eigenvalue $\lambda>\lambda_1(s,p,h),$ then  
    there exists a constant $C=C(n,p)$ 
    such that 
    \[
		\left(C
		\dfrac{s_0 (n-s_0p)^{p-1}\mbox{diam}(\Omega)^{s_0p}}{
		\mbox{diam}(\Omega)^{sp}\lambda
			\|h\|_{L^\infty(\Omega)}s_0p+\K}
		\right)^{\frac{p_{s_0}^\star}{p_{s_0}^\star-p}}\le |\Omega^\pm|.
    \]
    Here $\Omega^+=\{x\in\Omega\colon u(x)>0\}$ and  
    $\Omega^-=\{x\in\Omega\colon u(x)<0\}.$ 
\end{lem}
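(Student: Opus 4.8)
The plan is to run a Faber--Krahn type argument on each nodal domain. The two cases are symmetric --- for $\Omega^-$ one simply replaces $u_+=\max\{u,0\}$ by $u_-=\max\{-u,0\}$ and uses the second instead of the first inequality in \eqref{eq:u+u-} --- so I only describe the estimate for $|\Omega^+|$.

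First I would observe that $u$ must change sign. If it did not, then $|u|$ would be a non-negative eigenfunction associated to the eigenvalue $\lambda$, and Theorem~\ref{teo:autoval1} would force $\lambda=\lambda_1(s,p,h)$, contradicting $\lambda>\lambda_1(s,p,h)$. Hence $|\Omega^+|>0$ and $u_+\not\equiv0$. Moreover, by the first inequality in \eqref{eq:u+u-} we have $|u_+|_{\wspr}^p\le|u|_{\wspr}^p<\infty$, and $u_+=0$ in $\R^n\setminus\Omega$, so $u_+\in\twsp$ and is admissible as a test function in \eqref{eq:debil}.

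The core estimate comes from testing \eqref{eq:debil} with $v=u_+$: since $|u|^{p-2}u\,u_+=u_+^p$ and, by \eqref{eq:u+u-} again, $\mathcal{H}_{s,p}(u,u_+)\ge|u_+|_{\wspr}^p$, one obtains
\[
	\K(1-s)|u_+|_{\wspr}^p\le\lambda\int_\Omega h(x)u_+(x)^p\,dx
	\le\lambda\|h\|_{L^\infty(\Omega)}\int_\Omega u_+(x)^p\,dx .
\]
I would then feed $u_+$ into the Mazya-type estimate of Remark~\ref{re:mazya} (valid because $s_0\in(0,\min\{\nicefrac{n}{p},s\})$ and $u_+\in\twsp$), use the previous display to replace the term $(1-s)|u_+|_{\wspr}^p$, and collect everything over the common denominator $\K\,s_0p$. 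This gives, for a suitable constant $C=C(n,p)$,
\[
	\|u_+\|_{L^{p_{s_0}^\star}(\R^n)}^p\le
	\frac{C\bigl(\mbox{diam}(\Omega)^{sp}\lambda\|h\|_{L^\infty(\Omega)}s_0p+\K\bigr)}
	{s_0(n-s_0p)^{p-1}\mbox{diam}(\Omega)^{s_0p}}\int_\Omega u_+(x)^p\,dx .
\]

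To close the argument, since $s_0p<n$ we have $p<p_{s_0}^\star$; write $\theta:=1-\nicefrac{p}{p_{s_0}^\star}\in(0,1)$. Then Hölder's inequality on $\Omega^+$ (off which $u_+$ vanishes) gives $\int_\Omega u_+^p\,dx\le|\Omega^+|^{\theta}\|u_+\|_{L^{p_{s_0}^\star}(\R^n)}^p$. Plugging this into the last display and dividing by $\|u_+\|_{L^{p_{s_0}^\star}(\R^n)}^p\neq0$ leaves
\[
	|\Omega^+|^{\theta}\ge\frac1C\cdot
	\frac{s_0(n-s_0p)^{p-1}\mbox{diam}(\Omega)^{s_0p}}
	{\mbox{diam}(\Omega)^{sp}\lambda\|h\|_{L^\infty(\Omega)}s_0p+\K},
\]
and since $\theta^{-1}=\frac{p_{s_0}^\star}{p_{s_0}^\star-p}$, raising to the power $\theta^{-1}$ and renaming the constant yields the claimed bound for $|\Omega^+|$; the identical chain applied to $u_-$ gives the bound for $|\Omega^-|$. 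I expect the only genuinely delicate step to be the bookkeeping of the explicit $s$- and $s_0$-dependence of the constants coming from Remark~\ref{re:mazya}; everything else is the standard nodal-domain computation.
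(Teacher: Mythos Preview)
Your proposal is correct and follows essentially the same route as the paper's own proof: test the weak formulation with $u_+$, invoke the pointwise inequality \eqref{eq:u+u-} to control $\K(1-s)|u_+|_{\wspr}^p$ by $\lambda\|h\|_{L^\infty}\|u_+\|_{L^p}^p$, insert this into the Mazya-type estimate of Remark~\ref{re:mazya}, and finish with H\"older on $\Omega^+$. Your justification that $u$ changes sign (via Theorem~\ref{teo:autoval1}) and your bookkeeping of the $s_0$-dependent constants match the paper's argument.
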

\begin{proof} 
	 By Theorem \ref{teo:autoval1}, 
	$u$ changes sign then $u^+\not\equiv0.$ In addition, $u^{+}\in\twsp.$ 
	It follows from \eqref{eq:u+u-} that
	\begin{equation}\label{eq:aumedes1}
		\begin{aligned}
			\K(1-s)|u^+|_{\wspr}^p
		    &\le \K(1-s)\mathcal{H}_{s,p}(u,u^+)
		    \le \lambda\int_{\Omega^+}h(x)|u^+(x)|^p dx\\
		    &\le \lambda\|h\|_{L^\infty(\Omega)}
		    \int_{\Omega^+}|u^+(x)|^p dx.
		 \end{aligned}	   
	\end{equation}
    
    On the other hand, by Remark \ref{re:mazya}, there exists 
    a constant $C=C(n,p)$ such 	that
    \begin{align*}
			&\|u^+\|_{L^{p_{s_0}^\star}(\R^n)}^p
			\le\\
			& \dfrac{C}{(n-s_0p)^{p-1} \mbox{diam}(\Omega)^{s_0p}}
			\left(\mbox{diam}(\Omega)^{sp}(1-s)|u^+|_{W ^{s,p}(\R^n)}^p + 
			\dfrac1{s_0p}\|u^+\|^p_{\lp}\right).
		\end{align*}
	Then, by \eqref{eq:aumedes1} and H\"older's inequality, we get
	\begin{align*}
			&\|u^+\|_{L^{p_{s_0}^\star}(\R^n)}^p
			\le\\
			& \dfrac{C}{(n-s_0p)^{p-1} \mbox{diam}(\Omega)^{s_0p}}
			\left(
			\dfrac{\mbox{diam}(\Omega)^{sp}\lambda
			\|h\|_{L^\infty(\Omega)}}{\K}+ 
			\dfrac1{s_0p}\right)\|u^{+}\|_{L^{p_{s_0}^\star}(\Omega)}^{p}
		   |\Omega^+|^{\frac{p_{s_0}^\star-p}{p_{s_0}^\star}}.
		\end{align*}
    Hence
    \[
		\left(C
		\dfrac{\K s_0p (n-s_0p)^{p-1}\mbox{diam}(\Omega)^{s_0p}}{
		\mbox{diam}(\Omega)^{sp}\lambda
			\|h\|_{L^\infty(\Omega)}s_0p+\K}
		\right)^{\frac{p_{s_0}^\star}{p_{s_0}^\star-p}}\le |\Omega^+|.
    \]

	\medskip
	
	In order to prove the second inequality, it will suffice 
	to proceed as above, using the function $u^-(x)=\max\{0,-u(x)\}$ 
	instead of
	$u^+.$
\qed \end{proof}	

Finally, we show that the first eigenvalue is isolated, see also
\cite{2014arXiv1409.6284B}.

\begin{teo}
	\label{teo:isolated}
	Let $\Omega\subset\R^n$ be a bounded domain with Lipschitz 
    boundary, $s\in(0,1),$ $p\in(1,\infty),$ and $h\in\mathcal{A}.$ 
    Then the first eigenvalue is isolated.  	
\end{teo}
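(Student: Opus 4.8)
The plan is to argue by contradiction, combining the simplicity of the first eigenvalue with the uniform lower bound for the measure of the nodal domains from Lemma~\ref{lem:autoval3}. Suppose $\lambda_1(s,p,h)$ is not isolated, so there are eigenvalues $\mu_k\neq\lambda_1(s,p,h)$ of \eqref{eq:epf} with $\mu_k\to\lambda_1(s,p,h)$ and eigenfunctions $u_k\in\twsp$. Since $\lambda_1(s,p,h)>0$ and, by \eqref{eq:RQ}, every positive eigenvalue is $\ge\lambda_1(s,p,h)$, we may assume $\mu_k>\lambda_1(s,p,h)$ for all $k$. Rescaling, we may also assume $\int_\Omega h(x)|u_k(x)|^p\,dx=1$; taking $u_k$ as test function in \eqref{eq:debil} then gives $\K(1-s)|u_k|_{\wspr}^p=\mu_k$, so $\{u_k\}_{k\in\N}$ is bounded in $\twsp$.

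I would then pass to the limit. Since $\twsp$ is reflexive and compactly embedded in $\lp$ (Theorem~\ref{teo:compinc}), along a subsequence $u_k\rightharpoonup u$ weakly in $\twsp$, $u_k\to u$ strongly in $\lp$, and $u_k\to u$ a.e.\ in $\Omega$, for some $u\in\twsp$. As $h\in L^\infty(\Omega)$, the normalization passes to the limit, $\int_\Omega h(x)|u(x)|^p\,dx=1$, so $u\not\equiv 0$. By weak lower semicontinuity of the Gagliardo seminorm, $\K(1-s)|u|_{\wspr}^p\le\liminf_{k\to\infty}\K(1-s)|u_k|_{\wspr}^p=\lim_{k\to\infty}\mu_k=\lambda_1(s,p,h)$, while \eqref{eq:RQ} gives the opposite inequality $\K(1-s)|u|_{\wspr}^p\ge\lambda_1(s,p,h)$. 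Hence $u$ is a minimizer of \eqref{eq:RQ}, and by the Lagrange multiplier argument in the proof of Theorem~\ref{teo:LL}, $u$ is an eigenfunction of \eqref{eq:epf} associated with $\lambda_1(s,p,h)$. By Corollary~\ref{co:bf2}, $u$ therefore has constant sign in $\Omega$; without loss of generality $u>0$ a.e.\ in $\Omega$.

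Next I would use the eigenfunctions above $\lambda_1(s,p,h)$. Fix $s_0\in(0,\min\{\nicefrac{n}{p},s\})$. Since $\mu_k>\lambda_1(s,p,h)$, Theorem~\ref{teo:autoval1} rules out $u_k$ (and $-u_k$) being of constant sign, so each $u_k$ changes sign and $\Omega_k^-:=\{x\in\Omega\colon u_k(x)<0\}$ has positive measure. Applying Lemma~\ref{lem:autoval3} with $\lambda=\mu_k$, and using that $\mu_k\le\lambda_1(s,p,h)+1$ for $k$ large, the right-hand side of the estimate there is bounded below by a positive constant $c$ independent of $k$; hence $|\Omega_k^-|\ge c$ for all large $k$. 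On the other hand $u_k\to u$ a.e.\ in $\Omega$ and $u>0$ a.e.\ force $\chi_{\Omega_k^-}\to 0$ a.e.\ in $\Omega$, so $|\Omega_k^-|\to 0$ by dominated convergence, a contradiction. Therefore $\lambda_1(s,p,h)$ is isolated.

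The compactness and the identification of the weak limit as a first eigenfunction are routine given the results already proved; the step I expect to carry the real content is the final one, where the uniformity in $k$ of the measure estimate of Lemma~\ref{lem:autoval3} --- which needs only an upper bound on $\mu_k$ --- clashes with the constant sign of the limit eigenfunction. That is exactly what prevents a sequence of sign-changing eigenfunctions from accumulating at the first eigenvalue.
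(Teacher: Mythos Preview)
Your proof is correct and follows essentially the same route as the paper: both argue by contradiction, extract a weakly convergent subsequence of normalized sign-changing eigenfunctions, identify the limit as a first eigenfunction of constant sign, and obtain a contradiction with the uniform lower bound on the negative nodal set from Lemma~\ref{lem:autoval3}. The only cosmetic difference is in the final step: the paper invokes Egorov's theorem to force $\{u_k<0\}\subset A_\varepsilon$ for large $k$, whereas you use $\chi_{\{u_k<0\}}\to 0$ a.e.\ together with dominated convergence; both are equally valid and equally direct.
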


\begin{proof}
	By the definition of $\lambda_1(s,p,h),$ we have
	that $\lambda_1(s,p,h)$ is left--isolated.
	
	To prove that $\lambda_1(s,p,h)$ is right--isolated, we argue by 
	contradiction. We assume that there exists a  sequence of 
	eigenvalues $\{\lambda_k\}_{k\in\N}$ such that  
	$\lambda_k>\lambda_1(s,p,h)$ and 
	$\lambda_k\searrow \lambda_1(s,p,h)$ as $k\to \infty.$  
	Let $u_k$ be an eigenfunction associated to $\lambda_k,$ 
	we can assume that 
	\[
		\int_{\Omega}h(x)|u_k(x)|^p\,dx=1.
	\] 
	Then $\{u_k\}_{k\in\N}$
	is bounded in $\twsp$ and therefore
	we can extract a subsequence (that we still denoted by 
	$\{u_k\}_{k\in\N}$) 
	such that 
	\begin{align*}
		u_k\rightharpoonup u&\quad \mbox{ weakly in } \twsp,\\
		u_k\to u &\quad \mbox{ strongly in } L^p(\Omega).
	\end{align*}
	Then 
	\[
		\int_{\Omega}h(x)|u(x)|^p\,dx=1
	\]
	and
	\begin{align*}
		\K(1-s)|u|_{\wspr}^p&
		\le\K(1-s)\liminf_{k\to\infty}|u_k|_{\wspr}^p
		=\lim_{k\to\infty}\lambda_k\int_{\Omega}
					h(x)|u_k(x)|^p\,dx\\
					&=\lambda_1(s,p,h)\int_{\Omega}
					h(x)|u(x)|^p\,dx.
	\end{align*}
	Hence, $u$ is an eigenvalue of \eqref{eq:op} with eigenvalue 
	$\lambda_1(s,p,h).$ By Corollary \ref{co:bf2}, we can assume that
	$u>0.$
	
	On the other hand, by the Egorov's theorem, for any $\varepsilon>0$
	there exists a subset $A_\varepsilon$ of $\Omega$ such that 
	$|A_\varepsilon|<\varepsilon$ and $u_k\to u>0$ uniformly in 
	$\Omega\setminus A_{\varepsilon}.$ This contradicts the fact that,
	by Lemma \ref{lem:autoval3},
	\[
		\left(C
		\dfrac{s_0 (n-s_0p)^{p-1}\mbox{diam}(\Omega)^{s_0p}}{
		\mbox{diam}(\Omega)^{sp}\lambda_k
			\|h\|_{L^\infty(\Omega)}s_0p+\K}
		\right)^{\frac{p_{s_0}^\star}{p_{s_0}^\star-p}}\le 
		|\{x\in\Omega\colon u_k(x)<0\}|
    \]
	where $s_0\in(0,\min\{s,\nicefrac{n}{p}\})$ and $C$ depends on $n$
	and $p.$
	This proves the theorem.
\qed \end{proof}

\subsection{Global properties}
In the rest of this section, for simplicity, we will take $h\equiv1.$
\begin{lem} 
	\label{lem:eigcont}	
	 Let $\Omega\subset\R^n$ be a bounded domain with Lipschitz 
     boundary and $p\in(1,\infty).$
	 The first eigenvalue  function
	$\lambda_1(\cdot,p)\colon(0,1]\to\mathbb{R}$
	is continuous.
\end{lem}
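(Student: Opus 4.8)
The plan is to prove continuity at an arbitrary $s_0\in(0,1]$ by showing separately that $\lambda_1(\cdot,p)$ is upper and lower semicontinuous at $s_0$; this is the $\limsup$/$\liminf$ pair familiar from the $\Gamma$-convergence theory of Rayleigh quotients, and it uses essentially the same ingredients as the proof of Lemma~\ref{lem:contdeR}. Throughout we use the variational characterizations of $\lambda_1(s,p)=\lambda_1(s,p,1)$ from \eqref{eq:RQ} (for $s<1$) and from the subsection on $s=1$, and the existence of (nonnegative) first eigenfunctions from Theorem~\ref{teo:LL} and \cite{Anane}.

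For upper semicontinuity, fix $\varepsilon>0$ and, using that $C_0^\infty(\Omega)$ is dense in $\widetilde{W}^{s_0,p}(\Omega)$ (resp. in $\wupc$ when $s_0=1$), choose $\phi\in C_0^\infty(\Omega)$ with $\int_\Omega|\phi|^p\,dx=1$ and $\K(1-s_0)|\phi|_{W^{s_0,p}(\R^n)}^p<\lambda_1(s_0,p)+\varepsilon$ (resp. $|\phi|_{\wup}^p<\lambda_1(1,p)+\varepsilon$). Since $\phi$ is admissible in the definition of $\lambda_1(s,p)$ for every $s$, one has $\lambda_1(s,p)\le\K(1-s)|\phi|_{W^{s,p}(\R^n)}^p$. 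As $s\to s_0$ the right‑hand side converges: to $\K(1-s_0)|\phi|_{W^{s_0,p}(\R^n)}^p$ when $s_0<1$ (dominated convergence, since for $\phi\in C_0^\infty$ the Gagliardo integrand is dominated uniformly for $s$ near $s_0$ by $\|\nabla\phi\|_\infty^p|x-y|^{p-n-sp}$ on $\{|x-y|\le1\}$ and by $2^p\|\phi\|_\infty^p|x-y|^{-n-sp}$ on $\{|x-y|>1\}$, both integrable over the bounded region where the integrand is nonzero), and to $|\phi|_{\wup}^p$ when $s_0=1$ (Remark~\ref{re:sopcomp}). Hence $\limsup_{s\to s_0}\lambda_1(s,p)\le\lambda_1(s_0,p)+\varepsilon$, and we let $\varepsilon\to0$.

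For lower semicontinuity, pick $s_k\to s_0$ realizing $L:=\liminf_{s\to s_0}\lambda_1(s,p)$ and let $u_k\ge0$ be first eigenfunctions normalized by $\int_\Omega|u_k|^p\,dx=1$, so $\K(1-s_k)|u_k|_{W^{s_k,p}(\R^n)}^p=\lambda_1(s_k,p)\to L$; write $\tilde u_k$ for the extension by zero. If $s_0<1$, then $1-s_k$ is bounded below, so $|u_k|_{W^{s_k,p}(\R^n)}^p$ is bounded, and by Lemma~\ref{lem:inclu} (applied on $\R^n$) the family $\{\tilde u_k\}$ is bounded in $W^{s_1,p}(\R^n)$ for every $s_1<s_0$; since the $\tilde u_k$ are supported in $\overline{\Omega}$, Theorem~\ref{teo:compinc} gives, along a subsequence, $u_k\to u$ in $\lp$ (so $\int_\Omega|u|^p\,dx=1$) and $\tilde u_k\rightharpoonup\tilde u$ weakly in each $W^{s_1,p}(\R^n)$. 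Weak lower semicontinuity of the seminorm together with Lemma~\ref{lem:inclu} yields, for every $s_1<s_0$,
\[
|\tilde u|_{W^{s_1,p}(\R^n)}^p\le\liminf_k|\tilde u_k|_{W^{s_1,p}(\R^n)}^p\le\liminf_k|\tilde u_k|_{W^{s_k,p}(\R^n)}^p+C\Big(\tfrac{1}{s_1 p}-\tfrac{1}{s_0 p}\Big),
\]
and letting $s_1\uparrow s_0$ and applying Fatou's lemma on the left (the integrand converges pointwise to the one with exponent $s_0$), using $1-s_k\to1-s_0>0$, gives $\K(1-s_0)|\tilde u|_{W^{s_0,p}(\R^n)}^p\le L$; in particular $u\in\widetilde{W}^{s_0,p}(\Omega)$, so $\lambda_1(s_0,p)\le L$. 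If $s_0=1$, then $(1-s_k)|u_k|_{W^{s_k,p}(\R^n)}^p$ is bounded, so Theorem~\ref{teo:bbm2} provides, along a subsequence, $u\in\wupc$ with $u_k\to u$ in $\lp$ and $u_k\rightharpoonup u$ weakly in $W^{1-\varepsilon,p}(\Omega)$ for all $\varepsilon>0$; then $\int_\Omega|u|^p\,dx=1$ and, exactly as in the case $s=1$ of Lemma~\ref{lem:contdeR} (Remark~\ref{re:seminor(1-s)}, weak lower semicontinuity in $W^{1-\varepsilon,p}(\Omega)$, then Theorem~\ref{teo:bbm1} as $\varepsilon\to0$), $|u|_{\wup}^p\le L$, whence $\lambda_1(1,p)\le L$. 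In all cases $\liminf_{s\to s_0}\lambda_1(s,p)\ge\lambda_1(s_0,p)$, which with the upper bound proves continuity at $s_0$.

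I expect the main obstacle to be the lower semicontinuity, where one must pass to the limit in a Gagliardo seminorm whose order $s_k$ varies with $k$: for $s_0\in(0,1)$ this is tamed by Lemma~\ref{lem:inclu} plus the Fatou argument above, while for $s_0=1$ it is precisely the Bourgain--Brezis--Mironescu-type compactness of Theorem~\ref{teo:bbm2} already exploited in Lemma~\ref{lem:contdeR}. A minor but essential point is to check that the weak/strong limit lands in the correct space ($\widetilde{W}^{s_0,p}(\Omega)$, resp. $\wupc$) so that it is admissible in the variational definition of $\lambda_1(s_0,p)$.
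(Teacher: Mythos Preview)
Your proposal is correct and follows essentially the same route as the paper's proof: the $\limsup$ bound via smooth test functions (dominated convergence for $s_0<1$, Remark~\ref{re:sopcomp} for $s_0=1$) and the $\liminf$ bound via eigenfunctions, with Lemma~\ref{lem:inclu} plus Fatou for $s_0<1$ and the Bourgain--Brezis--Mironescu tools (Theorems~\ref{teo:bbm2}, \ref{teo:bbm1}, Remark~\ref{re:seminor(1-s)}) for $s_0=1$. The only differences are presentational, and your justification of the dominated convergence step is in fact more explicit than the paper's.
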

\begin{proof}
	Let $\{s_j\}_{j\in\N}$ be a sequence in $(0,1]$ convergent
	to $s\in(0,1].$ We will show that
	\begin{equation}
	\label{eq:aucon1}
		\lim_{j\to\infty}\lambda_1(s_j,p)=\lambda_1(s,p).
	\end{equation}
	
	\medskip
	
	We need to consider two cases: $s\in(0,1)$ and $s=1.$
	
	\medskip
	
	{\it Case} $s\in(0,1).$ Let $\phi\in C_0^\infty(\Omega),$ 
	$\phi\not\equiv0.$ Then 
	\[
		\lambda_1(s_j,p)\le \K(1-s_j)
		\dfrac{\displaystyle\int_{\R^n}\int_{\R^n}
		\dfrac{|\phi(x)-\phi(y)|^p}{|x-y|^{n+s_jp}}\, dxdy}
		{\displaystyle\int_\Omega |\phi(x)|^p \, dx}
	\]
	for all $j\in\N.$ Therefore, by dominated convergence theorem,
	\[
		\limsup_{j\to\infty}\lambda_1(s_j,p)\le \K(1-s)
		\dfrac{\displaystyle\int_{\R^n}\int_{\R^n}
		\dfrac{|\phi(x)-\phi(y)|^p}{|x-y|^{n+sp}}\, dxdy}
		{\displaystyle\int_\Omega |\phi(x)|^p \, dx}.
	\]
	As $\phi$ is arbitrary 
	\[
		\limsup_{j\to\infty}\lambda_1(s_j,p)\le \lambda_1(s,p) 
	\]
	due to \eqref{eq:RQ}.
	
	Thus, to prove \eqref{eq:aucon1}, we need to show that
	\[
		\liminf_{j\to\infty}\lambda_1(s_j,p)\ge \lambda_1(s,p).
	\]
	Let $\{s_k\}_{k\in\N}$ be a subsequence of $\{s_j\}_{j\in\N}$
	such that
	\begin{equation}
	\label{eq:aucon2}
		\lim_{k\to\infty}\lambda_1(s_k,p)
		=\liminf_{j\to\infty}\lambda_1(s_j,p).
	\end{equation}
	
	Let $u_k$  be an eigenfunction of \eqref{eq:op} with eigenvalue 
	$\lambda_1(s_k,p)$ such that $\|u_k\|_{L^p(\Omega)}=1.$
	Since
	\begin{equation}\label{eq:nueva1}
		\begin{aligned}
			\lim_{k\to\infty}\K(1-s_k)|u_k|_{W^{s_k,p}(\R^n)}^p&=
			\lim_{k\to\infty}\lambda_1(s_k,p)
			=\liminf_{j\to\infty}\lambda_1(s_j,p)\\
			&\le\limsup_{j\to\infty}\lambda_1(s_j,p)
			\le\lambda_1(s,p).
		\end{aligned}	
	\end{equation}
	Then $\{\K(1-s_k)|u_k|_{W^{s_k,p}(\R^n)}^p\}_{k\in\N}$ is bounded,
	therefore $\{|u_k|_{W^{s_k,p}(\R^n)}^p\}_{k\in\N}$ is bounded.
	
	On the other hand, given $\varepsilon>0$ there exists 
	$k_0\in\N$ such
	that $s-\varepsilon<s_k$ for all $k\ge k_0$ 
	and, by Lemma 
	\ref{lem:inclu}, we have
	\begin{equation}
	\label{eq:aucon4}
			|u_k|_{W^{s-\varepsilon,p}(\R^n)}^p\le  
			|u_k|_{W^{s_k,p}(\R^n)}^p
			+C(n,p)\left(\dfrac{1}{(s-\varepsilon)p}
			-\dfrac{1}{s_k p}\right)
			\|u_k\|_{\lp}^p
	\end{equation}
	for all $k\ge k_0.$
	Thus, using that $\|u_k\|_{\lp}=1$ for all $k\in\N$
	and $\{|u_k|_{W^{s_k,p}(\R^n)}^p\}_{k\in\N}$ is bounded, 
	we have that $\{u_k\}_{k\ge k_0}$ 
	is bounded in $\widetilde{W}^{s-\varepsilon,p}(\Omega).$ 
	Then there exists $u\in \widetilde{W}^{s-\varepsilon,p}(\Omega)$ 
	such that, for a subsequence that 
	we still call $\{u_k\}_{k\ge k_0},$ 
	\begin{equation}
	\label{eq:aucon5}
		\begin{aligned}
		u_k\rightharpoonup u&\quad \mbox{ weakly in } 
		{W}^{s-\varepsilon,p}(\R^n),\\
		u_k\to u &\quad \mbox{ strongly in } L^p(\Omega).
		\end{aligned}
	\end{equation}
	Then $\|u\|_{\lp}=1$ and by \eqref{eq:aucon5}, \eqref{eq:aucon4} and 
	\eqref{eq:aucon2}, we get
	\begin{align*}
		&\K(1-s)|u|_{W^{s-\varepsilon,p}(\R^n)}^p
		\le \liminf_{k\to \infty}
		\K(1-s_k)|u_k|_{W^{s-\varepsilon,p}(\R^n)}^p\\
		&\le\liminf_{k\to \infty}\left\{ 
		\K(1-s_k)|u_k|_{W^{s_k,p}(\R^n)}^p
		\vphantom{+C(n,p)\K(1-s_k)
		\left(\dfrac{1}{(s-\varepsilon)p}-
		\dfrac{1}{s_k p}\right)
		\|u_k\|_{\lp}^p}
		+C(n,p)\K(1-s_k)
		\left(\dfrac{1}{(s-\varepsilon)p}-
		\dfrac{1}{s_k p}\right)\right\}\\
		&=\liminf_{k\to \infty}\left\{ 
		\lambda_1(s_k,p)
		\vphantom{+C(n,p)\K(1-s_k)
		\left(\dfrac{1}{(s-\varepsilon)p}-
		\dfrac{1}{s_k p}\right)}
		+C(n,p)\K(1-s_k)
		\left(\dfrac{1}{(s-\varepsilon)p}-
		\dfrac{1}{s_k p}\right) \right\}\\
		&=\liminf_{j\to \infty}
		\lambda_1(s_j,p)+C(n,p)\K(1-s)
		\left(\dfrac{1}{(s-\varepsilon)p}-
		\dfrac{1}{s p}\right).
	\end{align*}
	As $\varepsilon>0$ is arbitrary, by Fatou Lemma, we have
	\begin{equation}
	\label{eq:aucon6}
		\begin{aligned}
			\K(1-s)|u|_{\wspr}^p
			&\le \K(1-s)\liminf_{\varepsilon\to 0^+}
			|u|_{W^{s-\varepsilon,p}(\R^n)}^p \\
			&\le\liminf_{j\to \infty}
			\lambda_1(s_j,p).
		\end{aligned}
	\end{equation}
	
	Since $\|u\|_{L^p(\Omega)}=1,$ by \eqref{eq:RQ} and \eqref{eq:aucon6}, we get
	\[
		\lambda_1(s,p)\le \K(1-s)|u|_{\wspr}^p
		\le\liminf_{j\to \infty}\lambda_1(s_j,p).
	\]
	
	{\it Case} $s=1.$ Let $\phi\in C_0^\infty(\Omega),$
	$\phi\not\equiv0.$ Then, for any $j\in\N$ 
	\[
		\lambda_1(s_j,p)\le \K(1-s_j)
		\dfrac{\displaystyle\int_{\R^n}\int_{\R^n}
		\dfrac{|\phi(x)-\phi(y)|^p}{|x-y|^{n+s_jp}}\, dxdy}
		{\displaystyle\int_\Omega |\phi(x)|^p \, dx}.
	\]
	Thus, by Remark \ref{re:sopcomp} and the above inequality, we get
	\[
		\limsup_{j\to\infty}\lambda_1(s_j,p)\le
		\dfrac{\displaystyle\int_\Omega|\nabla\phi(x)|^p\, dx }
		{\displaystyle\int_\Omega |\phi(x)|^p \, dx}.
	\]
	As $\phi$ is arbitrary 
	\[
		\limsup_{j\to\infty}\lambda_1(s_j,p)\le \lambda_1(1,p). 
	\]
	
	As in the previous case, to prove \eqref{eq:aucon1}, 
	we need to show that
	\[
		\liminf_{j\to\infty}\lambda_1(s_j,p)\ge \lambda_1(s,p).
	\]
	Let $\{s_k\}_{k\in\N}$ be a subsequence of $\{s_j\}_{j\in\N}$
	such that
	\begin{equation}
	\label{eq:aucon7}
		\lim_{k\to\infty}\lambda_1(s_k,p)
		=\liminf_{j\to\infty}\lambda_1(s_j,p).
	\end{equation}
	Let $u_k$ be an eigenfunction of \eqref{eq:op} with eigenvalue 
	$\|u_k\|_{\lp}=1.$Since
	\begin{equation}\label{eq:nueva2}
		\begin{aligned}
			\lim_{k\to\infty}\K(1-s_k)|u_k|_{W^{s_k,p}(\R^n)}^p&=
			\lim_{k\to\infty}\lambda_1(s_k,p)
			=\liminf_{j\to\infty}\lambda_1(s_j,p)\\
			&\le\limsup_{j\to\infty}\lambda_1(s_j,p)
			\le\lambda_1(s,p).
		\end{aligned}	
	\end{equation}
	Then $\{\K(1-s_k)|u_k|_{W^{s_k,p}(\R^n)}^p\}_{k\in\N}$ is bounded,
	therefore, by Theorem \ref{teo:bbm2}, we can extract 
	a subsequence (that we still denote by $\{u_k\}_{k\in\N}$) such that
	\begin{align}
		\label{eq:aucon10}u_{k}\rightharpoonup u&\quad\mbox{weakly in }
		W^{1-\varepsilon,p}(\Omega),\\
		\label{eq:aucon9}u_{k}\to u &\quad\mbox{strongly in } \lp,
	\end{align}
	for all $\varepsilon> 0,$ to some $u\in \wupc.$ 
	Thus, by \eqref{eq:aucon9}, we have $\|u\|_{\lp}=1$ and then
	\begin{equation}
	\label{eq:aucon11}
		\lambda_1(1,p)\le\int_\Omega |\nabla u|^p\, dx.
	\end{equation}
	
	On the other hand, given $\varepsilon>0$ there exists $k_0$ such that
	$1-\varepsilon<s_k$ for all $k\ge k_0.$ Then, by Remark 
	\ref{re:seminor(1-s)}, we get
	\[
		\K\varepsilon|u_k|_{W^{1-\varepsilon,p}(\Omega)}^p\le 
		2^{\varepsilon p}\mbox{diam}(\Omega)^{s_k-1+\varepsilon}
		\K(1-s_k)|u_k|_{W^{s_k,p}(\R^n)}^p
		\quad \forall k\ge k_0.
	\]
	Thus, by \eqref{eq:aucon10},
	\begin{equation}
	\label{eq:aucon12}
	\begin{aligned}
		\K\varepsilon|u|_{W^{1-\varepsilon,p}(\Omega)}^p&\le 
		\liminf_{k\to\infty}
		\K\varepsilon|u_k|_{W^{1-\varepsilon,p}(\Omega)}^p\\
		&\le 
		2^{\varepsilon p}
		\mbox{diam}(\Omega)^{\varepsilon}\liminf_{j\to\infty}\lambda_1(s_j,p).
	\end{aligned}	
	\end{equation}

	As $\varepsilon>0$ is arbitrary, by \eqref{eq:aucon12} and 
	Theorem \ref{teo:bbm1}, we have that
	\begin{equation}
	\label{eq:aucon13}
		\int_\Omega|\nabla u(x)|^p dx=
		\lim_{\varepsilon\to 0^+}
		\K\varepsilon|u|_{W^{1-\varepsilon,p}(\Omega)}^p\le
		\liminf_{j\to\infty}\lambda_1(s_j,p).
	\end{equation}

	Finally, by \eqref{eq:aucon11} and \eqref{eq:aucon13}, we get
	\[
		\lambda_1(1,p)\le
		\liminf_{j\to\infty}\lambda_1(s_j,p).
	\]
	This completes the proof.
\qed \end{proof}

For the proof of the following lemma we borrow ideas from 
\cite[Lemma 2.3]{delPino}.

\begin{lem}\label{lem:lgi}
	For every interval $[a,b]\subset(0,1]$  there is a 
	$\delta>0$ such that for all $s\in[a,b]$ there is no 
	eigenvalue of \eqref{eq:ep} in 
	$(\lambda_1(s,p),\lambda_1(s,p)+\delta].$	
\end{lem}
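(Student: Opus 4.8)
The plan is to argue by contradiction, following the scheme of \cite[Lemma 2.3]{delPino} but making the isolation uniform over $s\in[a,b]$ by combining the continuity of $\lambda_1(\cdot,p)$ (Lemma \ref{lem:eigcont}) with the uniform nodal-set estimate of Lemma \ref{lem:autoval3}. Suppose the claim fails. Then for every $k\in\N$ there exist $s_k\in[a,b]$ and an eigenvalue $\mu_k$ of \eqref{eq:ep} (with $s=s_k$ and $h\equiv1$) such that $\lambda_1(s_k,p)<\mu_k\le\lambda_1(s_k,p)+\tfrac1k.$ If $s_k=1$ for infinitely many $k,$ this makes $\lambda_1(1,p)$ fail to be isolated, contradicting \cite{Anane}; hence we may assume $s_k\in(0,1)$ for all $k.$ Passing to a subsequence, $s_k\to s_*\in[a,b],$ and by Lemma \ref{lem:eigcont} we have $\lambda_1(s_k,p)\to\lambda_1(s_*,p),$ so $\mu_k\to\lambda_1(s_*,p).$ Let $u_k\in\widetilde{W}^{s_k,p}(\Omega)$ be an eigenfunction associated with $\mu_k,$ normalised by $\|u_k\|_{L^p(\Omega)}=1;$ then $\K(1-s_k)|u_k|_{W^{s_k,p}(\R^n)}^p=\mu_k$ is bounded, and since $\mu_k>\lambda_1(s_k,p),$ Theorem \ref{teo:autoval1} forces $u_k$ to change sign.

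Next I would extract a limit eigenfunction exactly as in the proof of Lemma \ref{lem:eigcont}. If $s_*\in(0,1),$ fix $\varepsilon>0$ with $s_*-\varepsilon>0$; by Lemma \ref{lem:inclu} the sequence $\{u_k\}$ is bounded in $\widetilde{W}^{s_*-\varepsilon,p}(\Omega),$ so along a subsequence $u_k\rightharpoonup u$ weakly in $W^{s_*-\varepsilon,p}(\R^n)$ and $u_k\to u$ strongly in $L^p(\Omega),$ and letting $\varepsilon\to0$ and using weak lower semicontinuity and Fatou's lemma as in \eqref{eq:aucon6} gives $\K(1-s_*)|u|_{W^{s_*,p}(\R^n)}^p\le\lambda_1(s_*,p).$ If $s_*=1,$ Theorem \ref{teo:bbm2} (with the zero-average hypothesis replaced by $L^p$-boundedness, which holds since $\|u_k\|_{L^p(\Omega)}=1$) yields $u\in W^{1,p}_0(\Omega)$ with $u_k\to u$ in $L^p(\Omega)$ and $u_k\rightharpoonup u$ in $W^{1-\varepsilon,p}(\Omega)$ for all $\varepsilon>0,$ and arguing as in \eqref{eq:aucon12}--\eqref{eq:aucon13} (Remark \ref{re:seminor(1-s)} and Theorem \ref{teo:bbm1}) gives $\int_\Omega|\nabla u|^p\,dx\le\lambda_1(1,p).$ In either case $\|u\|_{L^p(\Omega)}=1,$ so by the variational characterisation of $\lambda_1(s_*,p)$ the function $u$ is a first eigenfunction; by Corollary \ref{co:bf2} (if $s_*<1$) or by \cite{Anane} (if $s_*=1$), $u$ has constant sign, and replacing $u$ by $-u$ if necessary we may assume $u>0$ a.e.\ in $\Omega.$

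Finally I would derive the contradiction from the nodal-set bound. Fix $s_0\in(0,\min\{\nicefrac{n}{p},a\}),$ so that $s_0<\min\{\nicefrac{n}{p},s_k\}$ for every $k.$ Applying Lemma \ref{lem:autoval3} to $u_k$ (an eigenfunction with eigenvalue $\mu_k>\lambda_1(s_k,p)$ and $h\equiv1,$ hence $\|h\|_{L^\infty(\Omega)}=1$) gives
\[
	\left(C\,\frac{s_0(n-s_0p)^{p-1}\mbox{diam}(\Omega)^{s_0p}}
	{\mbox{diam}(\Omega)^{s_kp}\,\mu_k\,s_0p+\K}\right)^{\frac{p_{s_0}^\star}{p_{s_0}^\star-p}}
	\le|\{x\in\Omega\colon u_k(x)<0\}|.
\]
Since $s_k\in[a,b]$ and $\mu_k\to\lambda_1(s_*,p)$ is bounded, the left-hand side is bounded below by a constant $c_0>0$ independent of $k.$ On the other hand, $u_k\to u$ strongly in $L^p(\Omega)$ and, along a further subsequence, a.e.\ in $\Omega$; since $u>0$ a.e., choose $m$ with $|\{u\le 2/m\}|<c_0/2,$ and by Egorov's theorem choose $A\subset\Omega$ with $|A|<c_0/2$ and $u_k\to u$ uniformly on $\Omega\setminus A.$ Then for $k$ large $u_k>1/m$ on $\{u>2/m\}\setminus A,$ so $\{x\in\Omega\colon u_k(x)<0\}\subset A\cup\{u\le 2/m\}$ has measure $<c_0,$ contradicting the lower bound. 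This proves the lemma. The step I expect to be most delicate is the passage to the limit as $s_k\to1$: one must pass to the limit in the weak formulation with the $s$-dependent seminorms, which is precisely where the Bourgain--Brezis--Mironescu estimates (Theorems \ref{teo:bbm1} and \ref{teo:bbm2} and Remark \ref{re:seminor(1-s)}) and the uniform control of the constants in Lemma \ref{lem:autoval3} are needed.
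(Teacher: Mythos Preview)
Your argument is correct and follows the same contradiction scheme as the paper: extract a sequence $(s_k,\mu_k,u_k)$ with $\mu_k\searrow\lambda_1(s_*,p)$, pass to a limit first eigenfunction $u>0$, and contradict this with the uniform nodal-set bound of Lemma \ref{lem:autoval3}. The one noteworthy difference is in how the limit eigenfunction is produced. You repeat by hand the Bourgain--Brezis--Mironescu estimates from the proof of Lemma \ref{lem:eigcont}, splitting into the cases $s_*<1$ and $s_*=1$ and first disposing of the possibility $s_k=1$ infinitely often. The paper instead writes $u_k=\RR_{s_k,p}(\lambda_k|u_k|^{p-2}u_k)$, observes that $\{|u_k|^{p-2}u_k\}$ is bounded in $L^{p'}(\Omega)$, and invokes the complete continuity of $(s,h)\mapsto\RR_{s,p}(h)$ established in Lemma \ref{lem:contdeR} to obtain in one stroke a subsequence with $u_k\to u$ in $L^p(\Omega)$ and $u=\RR_{s,p}(\lambda_1(s_*,p)|u|^{p-2}u)$. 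Your route is self-contained but longer; the paper's is shorter because the delicate $s$-limit (including the case $s_*=1$) has already been packaged in Lemma \ref{lem:contdeR}, so no case distinction or separate appeal to \cite{Anane} is needed.
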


\begin{proof}
	Suppose the lemma were false. Then we could find sequences
	$\{s_k\}_{k\in\N}$ in $(0,1],$ $\{\lambda_{k}\}_{k\in\N}$ in
	$\R_+$ and $\{u_k\}_{k\in\N}$ in $\twsp\setminus\{0\}$ such that
	\[
		\lim_{k\to\infty} s_k =s\in(0,1], \quad 
		\lambda_k>\lambda_1(s_k,p)\quad\forall k\in\N,\quad 
		\lim_{k\to\infty} \lambda_k-\lambda_1(s_k,p)=0,
	\]
	and for all $k\in\N$ $\|u_k\|_{\lp}=1 $ and
	\begin{equation}\label{eq:algi1}
		u_k=\RR_{s_k,p}(\lambda_k|u_k|^{p-2}u_k).
	\end{equation}
	
	By Lemma \ref{lem:eigcont}, 
	\begin{equation}\label{eq:algi2}
		\lim_{k\to\infty}\lambda_k=\lambda_1(s,p).
	\end{equation}
	
	\medskip
	
	On the other hand $\{|u_k|^{p-2}u_k\}_{k\in\N}$ is
	bounded in $L^{p^\prime}(\Omega)$ due to 
	$\|u_k\|_{\lp}=1$ for all $k\in\N.$ Then, by Lemma \ref{lem:contdeR},
	there exist $u\in\lp$ and a subsequence of $\{u_k\}_{k\in\N},$ still
	denoted $\{u_k\}_{k\in\N},$ such that $u_k\to u$ in $\lp$ as 
	$k\to\infty.$ Thus
	\begin{equation}\label{eq:algi3}
		|u_k|^{p-2}u_k\to |u|^{p-2}u \quad\text{strongly in }
		L^{p^\prime}(\Omega).
	\end{equation}
	
	Then, passing to the limit in \eqref{eq:algi1}, using
	\eqref{eq:algi2}, \eqref{eq:algi3} and Lemma  \ref{lem:contdeR},
	we get
	\[
		u=\RR_{s,p}(\lambda_1(s,p)|u|^{p-2}u).
	\]
	Therefore $u$ is an eigenfunction associated to $\lambda_1(s,p).$
	Then, by Corollary \ref{co:bf2} and Theorem \ref{teo:autoval1}, 
	we may assume without loss of generality,  that $u>0.$ 
	
	\medskip
	
	On the other hand, given $s_0\in(0,\min\{s,\nicefrac{n}p\})$
	there exists $k_0\in N$ such that $s_k\ge s$ for all $k\ge k_0$
	due to $s_k\to s$ as $k\to\infty.$ Thus, by Lemma \ref{lem:autoval3}, 
	we get
	\[
		\left(C
		\dfrac{s_0 (n-s_0p)^{p-1}\mbox{diam}(\Omega)^{s_0p}}{
		\mbox{diam}(\Omega)^{s_kp}\lambda_k
			\|h\|_{L^\infty(\Omega)}s_0p+\K}
		\right)^{\frac{p_{s_0}^\star}{p_{s_0}^\star-p}}\le 
		|\{x\in\Omega\colon u_k(x)<0\}|\quad \forall k\ge k_0.
    \]
	Then, since $u_k\to u$ in $\lp,$ $u$ must change its sign in $\Omega,$
	contrary to the fact that $u>0.$ 
\qed \end{proof}

\section{Bifurcation}\label{bifurcation}
Let $\Omega\subset\R^n$ be a bounded domain with Lipschitz boundary, 
$s\in(0,1],$ and 
$p\in(1,\infty).$ In this section we consider the following non-linear problem:

\begin{equation}
	\label{eq:elproblema}
	\begin{cases}
		\LL_{s,p}(u)=\lambda|u|^{p-2}u + f(x,u,\lambda) 
		&\text{ in } \Omega,\\
		u=0 &\text{ in } \R^n\setminus\Omega,
	\end{cases}	
\end{equation}
where $f\colon\Omega\times\R\times\R\to\R$
is a function such that
\begin{enumerate}
	\item $f$ satisfies a Carath\'eodory condition in the first two variables;
	\item $f(x,t,\lambda)=o(|t|^{p-1})$ near $t=0,$ uniformly a.e. with respect 
			to $x$ and uniformly with respect to $\lambda$ on bonded sets;
	\item There exists $q\in (1,p^\star_s)$ such that
			\[
				\lim_{|t|\to\infty}\dfrac{|f(x,t,\lambda)|}{|t|^{q-1}}=0
			\]
			uniformly a.e. with respect 
			to $x$ and uniformly with respect to $\lambda$ on bounded sets.
\end{enumerate}

\medskip

A pair $(\lambda,u)\in \R\times\twsp$ is a weak solution of 
\eqref{eq:elproblema} if 
\[
	\mathfrak{H}_{s,p}(u,v)=\int_\Omega 
		\left(\lambda|u(x)|^{p-2}u(x) + f(x,u,\lambda)\right)
		v(x)\, dx,
\]
for all $v\in\twsp.$ Here
\[
	\mathfrak{H}_{s,p}(u,v)=
	\begin{cases}
		\K(1-s)\mathcal{H}_{s,p}(u,v) &\text{ if } 0<s<1,\\
		\displaystyle\int_{\Omega}|\nabla u(x)|^{p-2}\nabla u(x)
		\nabla v(x)\, 
		dx
		&\text{ if } s=1.
	\end{cases}
  \]

\begin{re} 
	The pair $(\lambda,u)$ is weak solution of
	\eqref{eq:elproblema} iff $(\lambda,u)$ satisfies
	\[
	 	u=\mathscr{R}_{\lambda}(u)
	\]
	where $\mathscr{R}_{\lambda}(u)=\RR_{s,p}(\lambda|u|^{p-2}u+F(\lambda,u))$
	and $F(\lambda,\cdot)$ is the Nemitsky operator associated with
	$f.$
\end{re}

\medskip

We say that $(\lambda,0)\in\R\times\twsp$ is a bifurcation point of 
\eqref{eq:elproblema} if in any neighbourhood of $(\lambda,0)$ in 
$\R\times\twsp$ there exists a 
nontrivial solution of \eqref{eq:elproblema}. 

\medskip

The proof of the following result  is analogous to that of Proposition 
2.1 in \cite{delPino}

\begin{lem}\label{lem:bifaut} 
	Let $\Omega\subset\R^n$ be a bounded domain with Lipschitz boundary,
	$s\in(0,1],$ and $p\in(1,\infty).$ If $(\lambda,0)$ is a
	 bifurcation point of \eqref{eq:elproblema} then
	 $\lambda$ is an eigenvalue of  following
	eigenvalue problems
	\begin{equation}
	\label{eq:paut1}
	\begin{cases}
		(-\Delta)_p^su=\lambda|u|^{p-2}u 
		&\text{ in } \Omega,\\
		u=0 &\text{ in } \R^n\setminus\Omega.
	\end{cases}	
\end{equation}
\end{lem}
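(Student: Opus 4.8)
The plan is to argue by contradiction, following the scheme of \cite[Proposition 2.1]{delPino}, which rests on the fact that near a non-eigenvalue the map $\mathscr{R}_\lambda$ has no small non-trivial fixed points. So suppose $(\lambda,0)$ is a bifurcation point but $\lambda$ is \emph{not} an eigenvalue of \eqref{eq:paut1}. Then there is a sequence $(\lambda_k,u_k)$ of weak solutions of \eqref{eq:elproblema} with $\lambda_k\to\lambda$, $u_k\neq 0$ and $\|u_k\|_{\twsp}\to 0$. The first step is to renormalise: set $v_k\coloneqq u_k/\|u_k\|_{\twsp}$, so $\|v_k\|_{\twsp}=1$, and observe that $v_k$ satisfies
\[
	v_k=\RR_{s,p}\!\left(\lambda_k|v_k|^{p-2}v_k
	+\frac{F(\lambda_k,u_k)}{\|u_k\|_{\twsp}^{\,p-1}}\right),
\]
using the $(p-1)$-homogeneity of the leading term together with the definition of the solution operator $\RR_{s,p}$ from Section \ref{DP}.

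The second step is to show the remainder term goes to zero in the relevant dual space. By hypothesis (2) on $f$, for every $\varepsilon>0$ there is $\rho>0$ such that $|f(x,t,\mu)|\le\varepsilon|t|^{p-1}$ for $|t|\le\rho$ and $\mu$ in a bounded set; since $u_k\to 0$ in $\twsp$ and hence (by Theorem \ref{teo:compinc}) in $L^p(\Omega)$, one controls $\|F(\lambda_k,u_k)\|_{L^{p'}(\Omega)}/\|u_k\|_{\twsp}^{p-1}$ by a constant times $\varepsilon$ after passing to a subsequence; more care is needed where $|u_k|$ is large, but hypothesis (3) with the exponent $q<p_s^\star$ and the compact embedding $\twsp\hookrightarrow L^q(\Omega)$ handle that region. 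Thus the forcing terms $h_k\coloneqq\lambda_k|v_k|^{p-2}v_k+F(\lambda_k,u_k)/\|u_k\|_{\twsp}^{p-1}$ are bounded in $L^{q'}(\Omega)$, and the second summand tends to $0$ there.

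The third step uses compactness: $\{|v_k|^{p-2}v_k\}$ is bounded in $L^{p'}(\Omega)\subset L^{q'}(\Omega)$, so by the complete continuity of $\RR_{s,p}$ on $L^{q'}(\Omega)$ (stated after Theorem \ref{teo:compinc}, also a special case of Lemma \ref{lem:contdeR}) we may extract a subsequence with $v_k\to v$ strongly in $\twsp$; then $\|v\|_{\twsp}=1$, in particular $v\neq 0$, and $|v_k|^{p-2}v_k\to|v|^{p-2}v$ in $L^{p'}(\Omega)$. Passing to the limit in the fixed-point identity, using $\lambda_k\to\lambda$ and the continuity of $\RR_{s,p}$, gives $v=\RR_{s,p}(\lambda|v|^{p-2}v)$, i.e.\ $v$ is a non-trivial weak solution of \eqref{eq:paut1} with eigenvalue $\lambda$, contradicting the assumption that $\lambda$ is not an eigenvalue.

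The main obstacle is the second step: because $f$ is only $o(|t|^{p-1})$ near $0$ and merely sub-$q$-power-growth at infinity (not globally sublinear), one must split $\Omega$ into the region where $|u_k|$ is small and its complement, estimate the remainder in $L^{q'}$ rather than $L^{p'}$ on the latter, and then exploit $q<p_s^\star$ so that the operator $\RR_{s,p}$ is still completely continuous from $L^{q'}(\Omega)$ into $\twsp$. Once that bookkeeping is done, the renormalisation and compactness arguments are routine and mirror \cite{delPino}.
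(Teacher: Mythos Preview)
Your proof is correct and follows exactly the approach the paper intends: the paper does not give a detailed proof of this lemma but simply states that it is analogous to \cite[Proposition~2.1]{delPino}, and your renormalisation-and-compactness argument is precisely that proof adapted to the fractional setting via the solution operator $\RR_{s,p}$ and Lemma~\ref{lem:contdeR}. Your careful handling of the remainder term (splitting into $\{|u_k|\le\rho\}$ and its complement, and working in $L^{q'}$ to exploit the growth hypothesis~(3)) is the right bookkeeping for the generality stated here, even though the paper leaves these details implicit.
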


Let
\[
	\lambda_2(s,p)\coloneqq
	\inf\left\{\lambda>\lambda_1(s,p)\colon \lambda \mbox{ is an
	 eigenvalue of \eqref{eq:paut1}}\right\}.
\] 
For $\lambda<\lambda_1(s,p)$
or $\lambda_1(s,p)<\lambda<\lambda_2(s,p)$ the function $u\equiv0$
is the unique solution of
\[
	u=\RR_{s,p}(\lambda|u|^{p-2}u).
\]

Then for $\lambda<\lambda_1(s,p)$
or $\lambda_1(s,p)<\lambda<\lambda_2(s,p)$ we define the 
completely continuous operator
$\T_{s,p}^{\lambda}:\twsp\to\twsp$
\[
	\T_{s,p}^{\lambda}(u)\coloneqq\RR_{s,p}(\lambda|u|^{p-2}u).
\]
Thus
\[
	\deg_{\twsp}(I-\T_{s,p}^{\lambda},B(0,r),0)
\]
is well defined for any $\lambda<\lambda_1(s,p)$
or $\lambda_1(s,p)<\lambda<\lambda_2(s,p)$ and $r>0.$

\begin{teo}\label{teo:gradoT}
	Let $s\in(0,1],$  $p\in(1,\infty),$ and $r>0.$ Then
	\[
		\deg_{\widetilde{W}^{s,p}(\Omega)}
		(I-\T_{s,p}^{\lambda},B(0,r),0)=
		\begin{cases}
			\phantom{-}1 &\text{ if }\lambda<\lambda_{1}(s,p),\\
		   -1 &\text{ if }\lambda_{1}(s,p)<\lambda
		   <\lambda_{2}(s,p).
		\end{cases}
 		\]
\end{teo}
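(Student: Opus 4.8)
The plan is to compute the degree by a homotopy in $s$ that reduces the statement to the already-known local case $s=1$, treated in \cite[Section 2]{delPino}. First I would fix $r>0$ and fix a value of $\lambda$ lying either in $(-\infty,\lambda_1(s,p))$ or in $(\lambda_1(s,p),\lambda_2(s,p))$; by the continuity of $\lambda_1(\cdot,p)$ (Lemma \ref{lem:eigcont}) and by Lemma \ref{lem:lgi}, a whole interval $[a,b]\subset(0,1]$ of orders $\sigma$ can be chosen so that the same $\lambda$ stays strictly on the same side of $\lambda_1(\sigma,p)$ and strictly below $\lambda_2(\sigma,p)$ for every $\sigma\in[a,b]$. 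The key point here is that Lemma \ref{lem:lgi} gives a uniform gap $\delta>0$, so $\lambda$ can be kept away from the spectrum uniformly in $\sigma$.

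Next I would set up the homotopy. Because the degree is computed in the moving space $\widetilde{W}^{\sigma,p}(\Omega)$, the clean way is to transport everything to a fixed Lebesgue space via Lemma \ref{lem:LSDP}: with $X=L^{p}(\Omega)$ (or $L^{q}(\Omega)$ for a suitable $q$), $Y=\widetilde{W}^{\sigma,p}(\Omega)$ and $T=\RR_{\sigma,p}(\lambda|\cdot|^{p-2}\cdot)$, that lemma identifies $\deg_{\widetilde{W}^{\sigma,p}(\Omega)}(I-\T_{\sigma,p}^{\lambda},B(0,r),0)$ with a Leray--Schauder degree in $L^p(\Omega)$ of $I-i\circ T$ on an appropriate ball. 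Then the map
\[
	H(\sigma,u)\coloneqq u-\RR_{\sigma,p}(\lambda|u|^{p-2}u),\qquad (\sigma,u)\in[a,b]\times L^{p}(\Omega),
\]
is, by Lemma \ref{lem:contdeR} (the remark after it, with exponent $p$), a compact perturbation of the identity depending continuously on $\sigma$, so it is an admissible homotopy provided $0$ is never attained on the relevant boundary sphere. Admissibility is exactly the statement that for every $\sigma\in[a,b]$ the only fixed point of $\T_{\sigma,p}^{\lambda}$ is $u=0$: if $u=\RR_{\sigma,p}(\lambda|u|^{p-2}u)$ with $u\neq0$ then $\lambda$ is an eigenvalue of \eqref{eq:paut1}, which is excluded by our choice of the interval $[a,b]$ (using the simplicity of $\lambda_1$, Corollary \ref{co:bf2}, and Lemma \ref{lem:lgi}). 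Since $0$ is the only zero, one can take the ball $B(0,r)$ of any radius $r>0$ and homotopy invariance gives that the degree is independent of $\sigma\in[a,b]$; letting $b=1$ (which is legitimate because $\lambda_1(\cdot,p)$ is continuous up to $s=1$ and $\lambda\neq\lambda_1(1,p),\lambda_2(1,p)$ in each case), we reduce to $\sigma=1$.

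Finally, for $\sigma=1$ the operator $\T_{1,p}^{\lambda}=\RR_{1,p}(\lambda|\cdot|^{p-2}\cdot)$ is precisely the one studied in \cite{delPino}, where the degree is shown to be $1$ for $\lambda<\lambda_1(1,p)$ and $-1$ for $\lambda_1(1,p)<\lambda<\lambda_2(1,p)$ (this uses the variational characterization of $\lambda_1(1,p)$ via $J_{1,p}$ and a mountain-pass/index argument near the eigenfunction; we simply quote it). Combining with the homotopy invariance and Lemma \ref{lem:LSDP} yields the claimed values of $\deg_{\widetilde{W}^{s,p}(\Omega)}(I-\T_{s,p}^{\lambda},B(0,r),0)$ for all $s\in(0,1]$. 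The main obstacle I anticipate is the bookkeeping in the homotopy: one must make sure the completely continuous operator $\RR_{\sigma,p}(\lambda|\cdot|^{p-2}\cdot)$ is well-defined and continuous \emph{jointly} in $(\sigma,u)$ on the fixed space $L^p(\Omega)$ (this is where Lemma \ref{lem:contdeR} and its remark are essential, together with the embedding estimates of Lemma \ref{lem:inclu} and Lemma \ref{lem:poincare} that keep norms controlled uniformly in $\sigma\in[a,b]$), and that the change of space in Lemma \ref{lem:LSDP} is compatible with the homotopy at every $\sigma$, not just at the endpoints.
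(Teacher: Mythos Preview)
Your overall strategy---transfer the degree to $L^p(\Omega)$ via Lemma \ref{lem:LSDP}, then run a homotopy in the order $\sigma$ using Lemma \ref{lem:contdeR}, and read off the value at $\sigma=1$ from \cite{delPino}---is exactly the architecture the paper uses. However, there is a genuine gap in the way you set up the homotopy.

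You keep $\lambda$ \emph{fixed} and look for an interval $[a,b]\subset(0,1]$ with $b=1$ on which ``the same $\lambda$ stays strictly on the same side of $\lambda_1(\sigma,p)$''. This is not justified: $\lambda_1(\cdot,p)$ is continuous (Lemma \ref{lem:eigcont}) but nothing prevents $\lambda_1(\sigma,p)$ from crossing the fixed value $\lambda$ at some intermediate $\sigma\in(s_0,1)$. At such a $\sigma$ the equation $u=\RR_{\sigma,p}(\lambda|u|^{p-2}u)$ has nontrivial solutions (the eigenfunctions), so $H(\sigma,\cdot)$ vanishes on the boundary sphere and the homotopy is inadmissible. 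Your parenthetical justification (``$\lambda\neq\lambda_1(1,p),\lambda_2(1,p)$'') is neither guaranteed nor sufficient; even if it held, it says nothing about intermediate $\sigma$.

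The paper repairs this as follows. For $\lambda_1(s_0,p)<\lambda<\lambda_2(s_0,p)$ it does \emph{not} keep $\lambda$ fixed: using Lemma \ref{lem:eigcont} and Lemma \ref{lem:lgi} it produces a continuous function $\rho\colon[s_0,1]\to\R$ with $\lambda_1(\sigma,p)<\rho(\sigma)<\lambda_2(\sigma,p)$ for every $\sigma$ and $\rho(s_0)=\lambda$, and runs the homotopy $(\sigma,u)\mapsto\RR_{\sigma,p}(\rho(\sigma)|u|^{p-2}u)$. This guarantees admissibility at every $\sigma$ and lands at $\sigma=1$ with a parameter in the gap $(\lambda_1(1,p),\lambda_2(1,p))$, where \eqref{eq:caso1p} applies. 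For $\lambda<\lambda_1(s_0,p)$ the paper does not homotope in $s$ at all; it simply uses the linear homotopy $a\mapsto\RR_{s_0,p}(a\lambda|u|^{p-2}u)$, $a\in[0,1]$, which connects $\T_{s_0,p}^{\lambda}$ to $0$ and gives degree $1$ directly. If you incorporate the moving $\rho(\sigma)$ in the second case and the scaling homotopy in the first, your argument becomes correct and coincides with the paper's.
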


This result is a generalization of Proposition 2.2 in \cite{delPino},
where the authors show that
\begin{equation}\label{eq:caso1p}
	\deg_{\wupc}(I-\T_{1,p}^{\lambda}(u),B(0,r),0)=
		\begin{cases}
			\phantom{-}1 &\text{ if }\lambda<\lambda_{1}(1,p),\\
		   -1 &\text{ if }\lambda_{1}(1,p)<\lambda<\lambda_{2}(1,p).
		\end{cases}
\end{equation}

\begin{proof}
	Let $s_0 \in(0,1].$ We begin by the case 
	$\lambda_1(s_0,p)<\lambda<\lambda_2(s_0,p).$
	By Lemmas \ref{lem:eigcont} and \ref{lem:lgi}, there exists a
	continuous function $\rho\colon[s_0,1]\to\R$ such that
	\[
		\lambda_1(s,p)<\rho(s)<\lambda_2(s,p)\quad \forall s\in[s_0,1],
	\] 
	and $\rho(s)=\lambda.$
	Then it is sufficient to prove that the function
	$d\colon[s_0,1]\to\R$
	\[
		d(s)\coloneqq
		\deg_{\twsp}(I-\T_{s,p}^{\rho(s)},B(0,r),0)
	\]
	is constant due to $d(1)=-1$
	
	Let $s\in[s_0,1].$ We define the operator 
	$\mathcal{P}_s\colon \lp\to\twsp$ as 
	\[
		\mathcal{P}_s(u)\coloneqq\RR_{s,p}(\rho(s)|u|^{p-2}u).
	\] 
	Then $\mathcal{P}_s$ is completely continuous and
	\[
		\T_{s,p}^{\rho(s)}=\mathcal{P}_s\circ i
	\]
	where $i\colon\twsp\to\lp$ is the usual inclusion. Thus, by
	Lemma \ref{lem:LSDP}, we get
	\begin{equation}\label{eq:246}
		d(s)=\deg_{\lp}(I-i\circ\mathcal{P}_s,O,0)\quad\forall s\in[s_0,1]
	\end{equation}
	where $O$ is any open bounded set in $\lp$ such that $0\in O.$ 
	
	On the other hand, since $\rho$ is continuous and by Lemma 
	\ref{lem:eigcont}, we get that the homotopy
	\[
		[s_0,1]\times\lp\to\lp
	\]
	\[
		(s,u)\to\RR_{s,p}(\rho(s)|u|^{p-2}u)=(i\circ\mathcal{P}_s)(u)
	\]
	is completely continuous. Then $d(s)$ is constant in $[s_0,1]$
	due to the invariance of the Leray-Schauder degree under compact
	homotopy and \eqref{eq:246}.
	
	\medskip
	
	Finally, we consider the case $\lambda<\lambda_{1}(s_0,p).$
	Given $a\in[0,1],$ the degree
	\[
		\deg_{\widetilde{W}^{s_0,p}(\Omega)}
		(I-\RR_{s_0,p}(a\lambda \Psi_p(\cdot)),B(0,r),0)
	\]
	is well defined. Here $\Psi_p(u)=|u|^{p-2}u.$
	Then, from the invariance of the degree under homotopies, we get
	\[
		\deg_{\widetilde{W}^{s_0,p}(\Omega)}
		(I-\RR_{s_0,p}(a\lambda \Psi_p(\cdot)),B(0,r),0)=
		\deg_{\widetilde{W}^{s_0,p}(\Omega)}
		(I,B(0,r),0)=1
	\]
	for all $a\in[0,1].$
\qed \end{proof}

Finally, proceeding as in the proof of Theorem 1.1 in \cite{delPino}, we
prove Theorem \ref{teo:teo11}. 

\begin{proof}[Proof of Theorem \ref{teo:teo11}]
	By contrary, suppose that $(\lambda_1(s,p),0)$ is no  bifurcation point of
	\eqref{eq:elproblema}. Then there exist $\varepsilon,\delta_0>0$
	such that for
	$|\lambda-\lambda_1(s,p)|\le\varepsilon$ and $\delta<\delta_0$
	there is no  non-trivial solution of
	\[
		u-\mathscr{R}_{\lambda}(u)=0
	\]
	with $\|u\|_{\wspr}=\delta.$ Since the degree is invariance under
	compact homotopies
	\begin{equation}\label{eq:degree}
		\deg_{\twsp}
		(I-\mathscr{R}_{\lambda},B(0,\delta),0)=
		\text{constant} 
	\end{equation}
	for all $\lambda\in[\lambda_1(s,p)-\varepsilon,
	\lambda_1(s,p)+\varepsilon].$
	
	Taking $\varepsilon$ small enough, we can assume that there is no
	eigenvalue of \eqref{eq:paut1} 
	in $(\lambda_1(s,p),\lambda_1(s,p)+\varepsilon].$
	
	Fix now $\lambda \in (\lambda_1(s,p),\lambda_1(s,p)+\varepsilon].$
	We claim that if choose $\delta$ small enough then there is no solution
	of 
	\[
		u-\RR_{s,p}(\lambda|u|^{p-2}u+tF(\lambda,u))=0
	\]
	with $\|u\|_{\wspr}=\delta,$ for all $t\in[0,1].$
	Indeed, assuming the contrary and reasoning as in the proof of Proposition 2.1 of \cite{delPino} (see also Lemma \ref{lem:bifaut}), we would find that $\lambda$ is an eigenvalue of \eqref{eq:paut1}, that is a contradiction.
	
	Thus, since the degree is invariance under homotopies, 
	by Theorem \ref{teo:gradoT}, 
	\[
		\deg_{\widetilde{W}^{s,p}(\Omega)}
		(I-\mathscr{R}_{\lambda},B(0,\delta),0)=
		\deg_{\widetilde{W}^{s,p}(\Omega)}
		(I-\T_{s,p}^{\lambda},B(0,\delta),0)=-1.
	\] 
	
	In similar manner, we can see that
	\[
		\deg_{\widetilde{W}^{s,p}(\Omega)}
		(I-\mathscr{R}_{\lambda},B(0,\delta),0)=1
	\]
	for all $\lambda\in[\lambda_1(s,p)-\varepsilon,\lambda_1(s,p)).$
	Therefore  $\deg_{\widetilde{W}^{s,p}(\Omega)}
	(I-\mathscr{R}_{\lambda},B(0,\delta),0)$ is no constant function.
	But this is a contradiction with \eqref{eq:degree} and so 
	$(\lambda_1(s,p),0)$ is a bifurcation point of
	\eqref{eq:elproblema}. 
	
	The rest of the proof follows in the same manner as in \cite{RA2}. \qed
\end{proof}

\section{Existence of constant-sign solution}\label{aplica}
Let $\Omega\subset\R^n$ be a bounded domain with Lipschitz boundary,
$s\in(0,1),$ $p\in(1,\infty),$ and $g\colon\R\to\R$ be a continuous function
such that $g(0)=0.$ In this section, we will apply Theorem \ref{teo:teo11} to show that the following non-linear 
non-local problem
\begin{equation}\label{eq:D}
	\begin{cases}
		(-\Delta)^s_p u=g(u) &\text{ in } \Omega,\\
		u=0 &\text{ in } \R^n\setminus \Omega,\\
	\end{cases}
\end{equation} 
has a non-trivial weak solution. Observe that $u\equiv0$ is a solution of 
\eqref{eq:D}.

\medskip

We will keep the following assumptions about $g$, throughout this section:
\begin{enumerate}[\mbox{A}1.]
\itemsep1.5em 
\item	$\dfrac{g(t)}{|t|^{p-2}t}$ is bounded;
\item  $\underline{\lambda}\coloneqq\displaystyle\lim_{t\to0}
		{\dfrac{g(t)}{|t|^{p-2}t}<\lambda_1(s,p)<
		\displaystyle\liminf_{|t|\to\infty}\dfrac{g(t)}{|t|^{p-2}t}}.$ 
\end{enumerate}

\medskip

Note that, if $g$ satisfies A1 and A2 then
\[
	g(t)=\underline{\lambda}|t|^{p-2}t + f(t),
\]
where $f(t)=o(|t|^{p-1})$ near $t=0.$ Then, our problem is related to the next 
bifurcation problem
\begin{equation}\label{eq:B}
	\begin{cases}
		(-\Delta)^s_p u=\lambda |u|^{p-2}u+ f(u) &\text{ in } \Omega,\\
		u=0 &\text{ in } \R^n\setminus \Omega.
	\end{cases}
\end{equation} 

By Theorem \ref{teo:teo11} there exists a connected component $\CC$ of the
set of non-trivial solution of \eqref{eq:B} in
$\R\times\twsp$  whose closure contains $(\lambda_1(s,p),0)$
and it is either unbounded or contains a pair $(\lambda,0)$
for some $\lambda,$ eigenvalue of \eqref{eq:ep} with
$\lambda>\lambda_1(s,p).$

\begin{lem}\label{lem:lema31}
	Let $\Omega\subset\R^n$ be a bounded domain with Lipschitz boundary,
	$s\in(0,1),$ and $p\in(1,\infty).$ Then $\CC$ is unbounded and
	\[
		\overline{\CC}\subset
		\HH\coloneqq\{(\lambda_1(s,p),0)\}\cup(\R\times\PP),
	\] 
	where $\PP\coloneqq\{v\in\twsp\colon v
	\mbox{ has constant-sign in } \Omega\}.$
\end{lem}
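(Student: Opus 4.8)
The plan is first to show that the entire component $\CC$ consists of constant-sign solutions, that is $\CC\subset\R\times\PP$, and then to rule out the second alternative of Theorem \ref{teo:teo11} --- that $\overline{\CC}$ meets the line $\R\times\{0\}$ at some eigenvalue $\mu>\lambda_1(s,p)$ --- thereby forcing $\CC$ to be unbounded and yielding $\overline{\CC}\subset\HH$. The scheme is the classical one of Rabinowitz and \cite{delPino}, the nonlocal input being supplied by Section \ref{ffl}: in particular the nodal lower bound of Lemma \ref{lem:autoval3} plays the role that the Hopf lemma and $C^1$ estimates play in the local case.

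The crucial local step is: there is a neighbourhood of $(\lambda_1(s,p),0)$ in $\R\times\twsp$ in which every non-trivial solution of \eqref{eq:B} lies in $\R\times\PP$. One argues by contradiction. Given non-trivial solutions $(\lambda_k,u_k)\to(\lambda_1(s,p),0)$ with each $u_k$ changing sign, set $v_k=u_k/\|u_k\|_{\twsp}$. Using $f(t)=o(|t|^{p-1})$ near $t=0$ and its subcritical growth, exactly as in the proof of Proposition~2.1 of \cite{delPino}, one gets $\|u_k\|_{\twsp}^{-(p-1)}F(\lambda_k,u_k)\to0$ in $L^{q'}(\Omega)$, so $v_k=\RR_{s,p}\!\big(\lambda_k|v_k|^{p-2}v_k+o(1)\big)$; by complete continuity of $\mathscr{R}_{\lambda}$ one has $v_k\to v$ strongly in $\twsp$ with $\|v\|_{\twsp}=1$, and $v$ solves $(-\Delta)^s_pv=\lambda_1(s,p)|v|^{p-2}v$, so by simplicity and Corollary \ref{co:bf2} we may assume $v>0$ a.e. On the other hand each $u_k$ solves $(-\Delta)^s_pu_k=a_k(x)|u_k|^{p-2}u_k$ with $a_k=\lambda_k+f(u_k)/(|u_k|^{p-2}u_k)$, which is bounded in $L^\infty(\Omega)$ uniformly in $k$ (because $f(t)/(|t|^{p-2}t)$ is bounded) and lies in $\mathcal{A}$; since $u_k$ changes sign it cannot be a first eigenfunction for the weight $a_k$, hence $1>\lambda_1(s,p,a_k)$, and Lemma \ref{lem:autoval3} gives $|\{u_k<0\}|\ge c_0>0$ with $c_0$ independent of $k$. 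But $v_k\to v>0$ a.e., so by Egorov's theorem $|\{u_k<0\}|=|\{v_k<0\}|\to0$ --- a contradiction.

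The local step is globalised by a connectedness argument. Put $A=\{(\lambda,u)\in\CC:\ u\text{ has constant sign}\}\cup\{(\lambda_1(s,p),0)\}$ and $B=\CC\setminus A=\{(\lambda,u)\in\CC:\ u^+\not\equiv0 \text{ and } u^-\not\equiv0\}$, so $A\ni(\lambda_1(s,p),0)$. Since $u\mapsto(u^+,u^-)$ is continuous on $L^p(\Omega)$ and $\twsp\hookrightarrow L^p(\Omega)$, the set $B$ is open in $\CC$. To see that $A$ is open in $\CC$: near $(\lambda_1(s,p),0)$ this is the local step; near a point $(\lambda_0,u_0)\in A$ with $u_0\not\equiv0$, the strong minimum principle (Theorem \ref{teo:stm}, applicable since $g(t)/(|t|^{p-2}t)$ is bounded) gives $u_0>0$ or $u_0<0$ a.e., while Lemma \ref{lem:autoval3}, with $a_\lambda$ uniformly bounded for $\lambda$ near $\lambda_0$, shows that any sign-changing solution of \eqref{eq:B} with $\lambda$ near $\lambda_0$ satisfies $|\{u<0\}|\ge c(\lambda_0)>0$; since $u\to u_0$ in $L^p(\Omega)$ and $u_0$ is of constant sign and $\not\equiv0$, neither a sign-changing solution nor an opposite-sign solution can occur in a $\twsp$-neighbourhood of $(\lambda_0,u_0)$. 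Thus $A$ is non-empty, open and closed in the connected set $\CC$, whence $A=\CC$ and $\CC\subset\R\times\PP$.

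To conclude, suppose $\overline{\CC}$ contained $(\mu,0)$ with $\mu>\lambda_1(s,p)$ an eigenvalue of \eqref{eq:paut1}. Choosing non-trivial $(\lambda_k,u_k)\in\CC$ with $(\lambda_k,u_k)\to(\mu,0)$ and rescaling as above produces an eigenfunction associated with $\mu$, which changes sign by Theorem \ref{teo:autoval1}; arguing again with Lemma \ref{lem:autoval3} and the strong convergence of the rescalings as in the local step, $u_k$ changes sign for large $k$, contradicting $\CC\subset\R\times\PP$. Hence by Theorem \ref{teo:teo11} the component $\CC$ is unbounded. Finally, any non-trivial limit point of $\CC$ is again a solution of \eqref{eq:B} (by compactness of $\mathscr{R}_{\lambda}$), hence of constant sign, hence strictly signed by Theorem \ref{teo:stm}; and any $(\lambda,0)\in\overline{\CC}$ is a bifurcation point, so $\lambda$ is an eigenvalue of \eqref{eq:paut1} by Lemma \ref{lem:bifaut}, and $\lambda>\lambda_1(s,p)$ has just been excluded, so $\lambda=\lambda_1(s,p)$. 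Therefore $\overline{\CC}\subset\HH$. The delicate point is the local step: certifying, via the nodal estimate of Lemma \ref{lem:autoval3}, that sign-changing solutions cannot accumulate at $(\lambda_1(s,p),0)$, and keeping that estimate uniform in the parameters involved.
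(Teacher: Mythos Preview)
Your proof is correct and follows essentially the same approach as the paper: the local step at $(\lambda_1(s,p),0)$ via rescaling and the nodal lower bound (Lemma~\ref{lem:autoval3}), the propagation along $\CC$ via the strong minimum principle (Theorem~\ref{teo:stm}), and the exclusion of the second Rabinowitz alternative are all exactly what the paper does. The only difference is packaging: where the paper argues $\overline{\CC}\subset\HH$ by taking a hypothetical point of $\overline{\CC}$ reachable from both inside and outside $\HH$ and deriving a contradiction in each case, you phrase the same connectedness argument as ``$A$ and $B$ are both open in $\CC$''. One small technical slip: you include $(\lambda_1(s,p),0)$ in $A$, but this point does not belong to $\CC$ (which consists of non-trivial solutions), so strictly speaking $A\not\subset\CC$; the clean fix is either to run the open-closed argument in $\CC\cup\{(\lambda_1(s,p),0)\}$, or to drop that point from $A$ and note that $A\cap\CC\neq\emptyset$ by the local step.
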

\begin{proof}
	We split the proof in 3 steps.
	
	\medskip
	
	\noindent {\it Step 1.} There exists a neighbourhood $U$ of 
	$(\lambda_1(s,p),0)$ in $\R\times\twsp$ such that
	$\CC\cap U\setminus\{(\lambda_1(s,p),0)\}\subset\R\times\PP.$
	
	Let us assume by contradiction the existence of a sequence 
	$\{(\lambda_k,u_k)\}_{k\in\N}$ of non-trivial solution of \eqref{eq:B}
	such that $u_k$ changes sign in $\Omega$ for all $k\in\N$ and
	$(\lambda_k,u_k)\to (\lambda_1(s,p),0)$ in $\R\times\twsp$ as
	$k\to\infty.$ 
	
	For any $k\in\N,$ since $h_k=\lambda_k + \nicefrac{f(u_k)}{|u_k|^{p-2}u_k}$
	is uniformly bounded in $\Omega$ and $u_k$ changes sign in $\Omega,$
	by Corollary \ref{co:bf2} we have that $1$ is an eigenvalue
	\eqref{eq:epf} with $h=h_k$ and $1>\lambda_1(s,p,h_k).$ Thus, by Lemma
	\ref{lem:autoval3} and using that $h_k$ is uniformly bounded in $\Omega$, 
	there exists a constant $C$ independent of $k$ such that
	\begin{equation}\label{eq:auxap1}
			|\{x\in\Omega\colon u_k(x)>0\}|\ge C 
			\mbox{ and }
			|\{x\in\Omega\colon u_k(x)<0\}|\ge C \quad\forall k\in\N.
	\end{equation} 
	
	On the other hand, taking 
	$\hat{u}_k\coloneqq\nicefrac{u_k}{\|u_k\|_{\twsp}},$ it follows that
	the sequence $\{\hat{u}_k\}_{k\in\mathbb{N}}$ is bounded in $\twsp$ then,
	via a subsequence if necessary, we have that there exists $u\in\twsp$ such 
	that 
	\begin{align*}
		\hat{u}_k\rightharpoonup u &\mbox{ weakly in } \twsp,\\
		\hat{u}_k\to u &\mbox{ strongly in } \lp,\\
		\hat{u}_k\to u &\mbox{ a.e. in } \Omega.
	\end{align*}
	By \eqref{eq:auxap1}, 
	\begin{equation}\label{eq:auxap2}
	  u\not\equiv0 \mbox{ and } u \mbox{ changes sign.}
	\end{equation}
	Moreover
	\begin{align*}
		\K(1-s)|u|_{\wspr}^p
		&\le 
		\lim_{k\to\infty}\K(1-s)\int_{\R^n}\!\!
		\int_{\R^n}\!\!\!\!
		\dfrac{|\hat{u}_k(x)-\hat{u}_k(y)|^p}{|x-y|^{n+sp}} dx dy\\
		&= 
		\lim_{k\to\infty}\int_{\Omega}h_k(x)|\hat{u}_k|^p\, dx\\
		&=\lambda_1(s,p)\int_\Omega |u|^p dx,
	\end{align*}
	due to $h_k$ is uniformly bounded in $\Omega,$ $h_k(x)\to\lambda_1(s,p)$
	a.e. in $\Omega$ and $\hat{u}_k\to u$ strongly in $\lp.$ Then
	\[
		\K(1-s)\dfrac{
		\displaystyle\int_{\R^n}\int_{R^n}\dfrac{|u(x)-u(y)|^p}{|x-y|^{n+sp}} 
		dx dy}{\displaystyle\int_\Omega |u|^p dx }\le\lambda_1(s,p).
	\]
	Thus, by definition of $\lambda_1(s,p),$ we have that 
	$u$ is an eigenfunction 
	associated to $\lambda_1(s,p).$ Therefore, by Corollary \ref{co:bf2}, 
	$u$ has constant sign, this yield a contradiction with
	\eqref{eq:auxap2}. Hence the claim follows.
	
	\medskip
	
	\noindent {\it Step 2.} $\overline{\CC}\subset\HH.$
	
	Again we proceed by contradiction. Suppose that there exists 
	$(\lambda_0,u_0)\in\overline{\CC}$ such that can be approximated by 
	elements of $\CC$ from inside and from without $\HH,$ that is 
	there exist $\{(\lambda_k,u_k)\}_{k\in\n}\subset \CC\cap\HH$ 
	and $\{(\mu_k,v_k)\}_{k\in\n}\subset \CC\cap\HH^c$ such that
	$(\lambda_k,u_k)\to(\lambda_0,u_0) $ and $(\mu_k,v_k)\to(\lambda_0,u_0). $
	By step 1, $(\lambda_0,u_0)\neq(\lambda_1(s,p),0).$
	
	\medskip
	
	Case $u_0\equiv 0.$ Thus $\lambda_0\neq\lambda_1(s,p).$  
	Proceeding in a similar manner as in the previous step, 
	we can see that $\lambda_0$ is an eigenvalue of \eqref{eq:paut1} 
	 different to $\lambda_1(s,p)$ and arrive to a contradiction.
	
	\medskip
	
	Case $u_0\not\equiv 0.$ We know that there exist 
	$\{(\lambda_k,u_k)\}_{k\in\N}\subset\CC\cap\HH$ such that
	$(\lambda_k,u_k)\to (\lambda_0,u_0)$ in $\R\times\twsp.$ Therefore
	$u_0$ is either non-negative or non-positive and $u_0$ is a weak solution of
	\[
		\begin{cases}
			(-\Delta)^s_p 
			u=\left(\lambda_0 + \dfrac{f(u_0)}{|u_0|^{p-2}u_0}
			\right)|u|^{p-2}u &\text{ in } \Omega,\\
			u=0 &\text{ in } \R^n\setminus \Omega.
		\end{cases}
	\]
	Without loss of generality, we can assume that $u_0\ge0$ a.e. in 
	$\Omega.$ Since 
	$\lambda_0 + \nicefrac{f(u_0)}{|u_0|^{p-2}u_0}$ is bounded,
	by Theorem \ref{teo:stm}, we have that $u_0>0$ a.e. in $\Omega.$ Thus
	$(\lambda_0,u_0)\in\HH.$  
	Argument in similar manner that in step 1, we can show that
	$(\lambda_0,u_0)$ can not be approximated by elements of $\CC$ from
	without $\HH,$ contradicting the fact that 
	$(\lambda_0,u_0)$ can be approximated by 
	elements of $\CC$  from without $\HH.$
	
	\medskip
	
	\noindent {\it Step 3.} $\CC$ is unbounded.
	
	Since $\overline{\CC}\subset\HH,$ $\CC$ does  not  contain a 
	pair $(\lambda,0)$ for some $\lambda,$ eigenvalue of \eqref{eq:ep} with
	$\lambda>\lambda_1(s,p).$ Then by Theorem \ref{teo:teo11}, $\CC$ is 
	unbounded.
\qed \end{proof}

Our next aim is to show that $\CC\cap\left(
[\underline{\lambda},\infty)\times\twsp\right)$ is bounded. 
For this, we will need the following result. The proof is identical to the proof
of \cite[Lemma 3.2]{delPino}.

\begin{lem}\label{lema:cotaaut} 
	There exists a positive constant $C$ such that
	if $(\lambda,u)\in\CC$ then $\lambda\le C.$
\end{lem}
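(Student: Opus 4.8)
The plan is to use Lemma~\ref{lem:lema31}, which tells us that every pair in $\CC$ is a non-trivial \emph{constant-sign} solution of \eqref{eq:B}. The idea is to read such a solution as a positive first eigenfunction of a weighted eigenvalue problem whose weight differs from the constant $\lambda$ by a uniformly bounded amount; the bound on $\lambda$ will then follow from the monotonicity and scaling of the first weighted eigenvalue in the weight.

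First, by assumption A1 and $f(t)=g(t)-\underline{\lambda}|t|^{p-2}t$, the quotient $f(t)/|t|^{p-2}t$ is bounded, so there is $M>0$ with $|f(t)|\le M|t|^{p-1}$ for every $t\in\R$. If $\lambda\le M$ there is nothing to prove, so assume $\lambda>M$. Let $(\lambda,u)\in\CC$; since $\overline{\CC}\subset\HH$ by Lemma~\ref{lem:lema31}, $u$ is a non-trivial constant-sign solution of \eqref{eq:B}, and as $(-\Delta)^s_p$ is odd we may assume $u\ge0$ (replacing $u$ by $-u$ if necessary), so that Theorem~\ref{teo:stm} gives $u>0$ a.e.\ in $\Omega$.

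Now set $h(x)\coloneqq\lambda+\dfrac{f(u(x))}{|u(x)|^{p-2}u(x)}$; then $|h(x)-\lambda|\le M$ a.e., hence $h\in L^\infty(\Omega)$, $h>0$ a.e.\ and $h\in\mathcal{A}$. Because $u>0$ a.e., the equation for $u$ becomes $\K(1-s)\mathcal{H}_{s,p}(u,v)=\int_\Omega h(x)|u(x)|^{p-2}u(x)v(x)\,dx$ for all $v\in\twsp$, i.e.\ $u$ is a non-negative eigenfunction of \eqref{eq:epf} with weight $h$ and eigenvalue $1$. Since a positive eigenfunction associated with $\lambda_1(s,p,h)$ exists (Theorem~\ref{teo:LL} and Corollary~\ref{co:bf1}), Theorem~\ref{teo:autoval1} forces $\lambda_1(s,p,h)=1$. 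This identification is the delicate step: being merely \emph{an} eigenfunction for the weight $h$ would not suffice, but the constant sign of $u$ makes it a \emph{first} weighted eigenfunction.

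To finish, from the characterization \eqref{eq:RQ} one reads off two elementary facts about $\lambda_1(s,p,\cdot)$ on positive weights: monotonicity, $h_1\le h_2\Rightarrow\lambda_1(s,p,h_2)\le\lambda_1(s,p,h_1)$, and scaling, $\lambda_1(s,p,c)=\lambda_1(s,p)/c$ for a constant $c>0$. Applying these with $\lambda-M\le h$ (here $\lambda-M>0$ because $\lambda>M$) yields
\[
1=\lambda_1(s,p,h)\le\lambda_1(s,p,\lambda-M)=\frac{\lambda_1(s,p)}{\lambda-M},
\]
so $\lambda\le\lambda_1(s,p)+M$. Together with the case $\lambda\le M$ this proves the lemma with $C\coloneqq\lambda_1(s,p)+M$; note that isolating $\lambda\le M$ at the outset is exactly what guarantees $h$ is a strictly positive admissible weight, so the above properties of $\lambda_1(s,p,\cdot)$ apply.
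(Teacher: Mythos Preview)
Your argument is correct and is precisely the approach the paper intends: it defers to \cite[Lemma~3.2]{delPino}, whose proof uses the constant-sign property of the branch (here Lemma~\ref{lem:lema31}) to read $u$ as a positive eigenfunction of the weighted problem with weight $h=\lambda+f(u)/|u|^{p-2}u$, identify $1=\lambda_1(s,p,h)$ via Theorem~\ref{teo:autoval1}, and then bound $\lambda$ through the monotonicity and scaling of $\lambda_1(s,p,\cdot)$ in the weight. One cosmetic remark: the reduction ``replace $u$ by $-u$'' is cleanest \emph{after} passing to the weighted eigenvalue equation (which is genuinely odd in $u$), since $f$ itself need not be odd; this does not affect the validity of your proof.
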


Then for showing that $\CC\cap\left(
[\underline{\lambda},\infty)\times\twsp\right)$ is bounded, it is 
enough to prove the result given below.

\begin{lem}\label{lema:cota}
	There exists a positive constant $M$ such that for any $(\lambda,u)\in
	\CC\cap\left([\underline{\lambda},C]\times\twsp\right)$
	we have that $\|u\|_{\twsp}\le M.$ Here $C$ is the constant of Lemma
	\ref{lema:cotaaut}.
\end{lem}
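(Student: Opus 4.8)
The plan is to argue by contradiction through a normalization (blow-up) argument, following \cite[Lemma 3.2]{delPino}. Suppose the conclusion fails; then there is a sequence $\{(\lambda_k,u_k)\}_{k\in\N}\subset\CC$ with $\lambda_k\in[\underline{\lambda},C]$ and $\|u_k\|_{\twsp}\to\infty$. Since each $u_k$ is a non-trivial solution and $\overline{\CC}\subset\HH$ by Lemma \ref{lem:lema31}, every $u_k$ has constant sign in $\Omega$, so after passing to a subsequence we may assume $u_k\ge0$ for all $k$. Put $v_k\coloneqq u_k/\|u_k\|_{\twsp}$, so $\|v_k\|_{\twsp}=1$ and $v_k\ge0$. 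With $f(t)=g(t)-\underline{\lambda}|t|^{p-2}t$ as in \eqref{eq:B}, problem \eqref{eq:B} in resolvent form reads $u_k=\RR_{s,p}(\lambda_k|u_k|^{p-2}u_k+f(u_k))$; dividing by $\|u_k\|_{\twsp}$ and using the homogeneity $\RR_{s,p}(\tau^{p-1}h)=\tau\RR_{s,p}(h)$ for $\tau>0$ gives
\[
	v_k=\RR_{s,p}\big(\lambda_k|v_k|^{p-2}v_k+w_k\big),\qquad
	w_k\coloneqq\frac{f(u_k)}{\|u_k\|_{\twsp}^{p-1}} .
\]
By A1 we have $|f(t)|\le C_0|t|^{p-1}$, hence $|w_k|\le C_0|v_k|^{p-1}$ pointwise and $\{w_k\}$ is bounded in $L^{p^\prime}(\Omega)$.

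Next I would pass to the limit. After a subsequence: $\lambda_k\to\lambda_0\in[\underline{\lambda},C]$; by the compactness in Theorem \ref{teo:compinc}, $v_k\rightharpoonup v$ in $\twsp$, $v_k\to v$ in $\lp$ and a.e.\ in $\Omega$; and $w_k\rightharpoonup w$ in $L^{p^\prime}(\Omega)$. Since $|v_k|^{p-2}v_k\to|v|^{p-2}v$ strongly in $L^{p^\prime}(\Omega)$, the argument of $\RR_{s,p}$ converges weakly in $L^{p^\prime}(\Omega)$ to $\lambda_0|v|^{p-2}v+w$; as the restriction of $\RR_{s,p}$ to $L^{p^\prime}(\Omega)$ is completely continuous into $\twsp$, we deduce $v_k\to v$ strongly in $\twsp$, so $\|v\|_{\twsp}=1$ (hence $v\not\equiv0$), $v\ge0$, and $v$ weakly solves $(-\Delta)^s_p v=\lambda_0|v|^{p-2}v+w$ in $\Omega$, $v=0$ in $\R^n\setminus\Omega$. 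Testing the bounds $|w_k|\le C_0|v_k|^{p-1}$ against characteristic functions of measurable subsets of $\Omega$ and letting $k\to\infty$ yields $|w|\le C_0|v|^{p-1}$ a.e.; hence the weight $\tilde h\coloneqq\lambda_0+w/(|v|^{p-2}v)$ on $\{v\neq0\}$, extended by $\lambda_0$ on $\{v=0\}$, is bounded and $v$ weakly solves $(-\Delta)^s_p v=\tilde h\,|v|^{p-2}v$. Testing this with $v$ gives $\K(1-s)|v|_{\wspr}^p=\int_\Omega\tilde h\,v^p>0$, so $\tilde h\in\mathcal{A}$; and then Theorem \ref{teo:stm}, applied to the nonnegative, nontrivial $v$ with positive parameter $1$, yields $v>0$ a.e.\ in $\Omega$.

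The contradiction then comes from A2. Since $v>0$ a.e.\ and $\|u_k\|_{\twsp}\to\infty$, we have $u_k(x)=\|u_k\|_{\twsp}v_k(x)\to+\infty$ for a.e.\ $x\in\Omega$. Because $\lambda_k\ge\underline{\lambda}$ and, where $v_k>0$, $w_k/v_k^{p-1}=f(u_k)/u_k^{p-1}=g(u_k)/u_k^{p-1}-\underline{\lambda}$, condition A2 gives, for a.e.\ $x$,
\[
	\liminf_{k\to\infty}w_k(x)\ \ge\ (L-\underline{\lambda})\,v(x)^{p-1},\qquad
	L\coloneqq\liminf_{|t|\to\infty}\frac{g(t)}{|t|^{p-2}t}>\lambda_1(s,p).
\]
Combining this pointwise lower bound with $w_k\rightharpoonup w$ in $L^{p^\prime}(\Omega)$ and the uniform integrability of $\{w_k\}$ (from $|w_k|^{p^\prime}\le C_0^{p^\prime}|v_k|^p$ and $|v_k|^p\to|v|^p$ in $L^1(\Omega)$), a weak-Fatou argument --- writing $\phi\coloneqq(L-\underline{\lambda})v^{p-1}$, noting $(w_k-\phi)^-\to0$ a.e.\ and, by uniform integrability, in $L^1(\Omega)$, and that the weak $L^1$-limit of the nonnegative $(w_k-\phi)^+$ is nonnegative --- gives $w\ge(L-\underline{\lambda})v^{p-1}$ a.e., hence $\tilde h\ge\lambda_0+(L-\underline{\lambda})\ge L>\lambda_1(s,p)$ a.e.\ in $\Omega$. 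Since $v>0$ is an eigenfunction of the $\tilde h$-weighted problem \eqref{eq:epf} with eigenvalue $1$, Theorem \ref{teo:autoval1} forces $\lambda_1(s,p,\tilde h)=1$; on the other hand, evaluating the quotient in \eqref{eq:RQ} for the weight $\tilde h$ at the (normalized) positive first eigenfunction of the unweighted problem and using $\tilde h\ge L$ gives $\lambda_1(s,p,\tilde h)\le\lambda_1(s,p)/L<1$, a contradiction. This proves the lemma.

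I expect the delicate point to be this triple passage to the limit in $w_k$: one needs at the same time the \emph{strong} $\twsp$-convergence of $v_k$ (so that $v\neq0$), the pointwise bound $|w|\le C_0|v|^{p-1}$ (so that $\tilde h$ is an admissible bounded weight and $v>0$ can be deduced), and the pointwise \emph{lower} bound forced by A2 (so that the Rayleigh-quotient comparison closes) --- the last of these is exactly where the weak-Fatou lemma for $L^{p^\prime}$-weakly convergent, uniformly integrable sequences enters. The remaining ingredients (homogeneity and complete continuity of $\RR_{s,p}$, extraction of subsequences, the strong minimum principle, and the comparison of weighted first eigenvalues) are routine.
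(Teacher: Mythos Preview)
Your proof is correct and follows essentially the same blow-up/normalization strategy as the paper: pass to a normalized limit $v>0$ solving a weighted eigenvalue problem, then use A2 to force the weight to exceed $\lambda_1(s,p)$ and obtain a contradiction with the Rayleigh quotient. The only notable technical difference is that the paper works directly with the bounded quotients $h_k\coloneqq f(u_k)/(|u_k|^{p-2}u_k)\rightharpoonup h$ and derives the lower bound $h\ge\overline{\lambda}-\underline{\lambda}$ via Egorov's theorem, whereas you carry the forcing term $w_k$ and recover the same lower bound through a weak--Fatou/uniform-integrability argument; both routes are standard and yield the same conclusion.
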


\begin{proof} 
	Suppose by contradiction that there exists a sequence 
	$\{(\lambda_k,u_k)\}_{k\in\mathbb{N}}$ of elements of 
	$\CC\cap\left([\underline{\lambda},C]\times\twsp\right)$
	such that $\lambda_k\to\lambda_0$ and 
	$\|u_k\|_{\twsp}\to\infty$ as $k\to\infty.$ Without loss of generality
	we can assume that $u_k>0$ for all $k\in\N.$
	
	Taking $\hat{u}_k=\nicefrac{u_k}{\|u_k\|_{\twsp}}$ and
	$h_k=\nicefrac{f(u_k)}{|u_k|^{p-2}u_k},$ for any $k\in\N$ we have that
	\[
		\hat{u}_k=\RR_{s,p}
		\left(\lambda_k|\hat{u}_k|^{p-2}\hat{u}_k+
		\frac{f(u_k)}{|u_k|^{p-2}u_k}
		|\hat{u}_k|^{p-2}\hat{u}_k
		\right).
	\]
	
	On the other hand, $\{\nicefrac{f(u_k)}{|u_k|^{p-2}u_k}\}_{k\in\N}$ 
	is uniformly bounded
	due to $g$ satisfies A1, then there exists $h\in L^\infty(\Omega)$ 
	such that
	\[
		\dfrac{f({u}_k)}{|u_k|^{p-2}u_k}\rightharpoonup h 
		\mbox{ weakly in } L^q(\Omega) \quad\forall q>1.
	\]
	
	Since $\RR_{s,p}$ to $L^{q^\prime}(\Omega)$ with 
	$q\in(1, p^\star_s)$ is a completely continuous operator, we have that
	there exists $u_0\in\twsp$ such that $u_k\to u_0$ strongly in 
	$\twsp$ and 
	\[
		u_0=\RR_{s,p}
		\left(\lambda_0|u_0|^{p-2}u_0+
		h|u_0|^{p-2}u_0
		\right),
	\]
	that is $u_0$ is a weak solution of 
	\[
		\begin{cases}
			(-\Delta)^s_p u=\left(\lambda_0 + h(x)
			\right)|u|^{p-2}u &\text{ in } \Omega,\\
			u=0 &\text{ in } \R^n\setminus \Omega.
		\end{cases}
	\]
	Observe that $u_0\neq0$ and $u\ge0$ due to $\|\hat{u}_k\|_{\twsp}=1$
	and $\hat{u}_k>0$ in $\Omega.$ Hence $\mu=1$ is the first eigenvalue of
	\begin{equation}\label{eq:auxteofin}
		\begin{cases}
			(-\Delta)^s_p u=\mu\left(\lambda_0 + h(x)
			\right)|u|^{p-2}u &\text{ in } \Omega,\\
			u=0 &\text{ in } \R^n\setminus \Omega,
		\end{cases}
	\end{equation}
	and $u_0$ is an eigenfunction associated to $1.$ Then, 
	by Corollary \ref{co:bf2}, we have that $u_0>0$ in $\Omega.$
	
	\medskip
	
	\noindent{\it Claim.} 
	$h\ge \overline{\lambda}- \underline{\lambda}$ a.e. in 
	$\Omega$ where $\lambda_1(s,p)<\overline{\lambda}<\displaystyle
	\liminf_{|s|\to\infty}\dfrac{g(s)}{|s|^{p-2}s}.$ 
	
	Suppose the contrary, that is the set $A=\{x\in\Omega\colon h(x)<
	\overline{\lambda}- \underline{\lambda}\}$ has positive measure. 
	Since $\hat{u}_k\to u_0>0$ a.e. in $\Omega,$ by the Egorov's theorem,
	there exists a set $U\subset\Omega$ such that $|\Omega\setminus U|<|A|$
	and $u_k\to\infty$  uniformly in $U.$ Then there exists $k_0\in\N$ such
	that $\nicefrac{f(u_k)}{|u_k|^{p-2}u_k}\ge
	\overline{\lambda}- \underline{\lambda}$ for all $k\ge k_0$ 
	because 
	\[
	\lambda_1(s,p)<\overline{\lambda}<\displaystyle
	\liminf_{|s|\to\infty}\dfrac{g(s)}{|s|^{p-2}s}=
	\underline{\lambda}+\liminf_{|s|\to\infty}\dfrac{f	(s)}{|s|^{p-2}s}
	\]
	and 
	therefore $h(x)\ge\overline{\lambda}- \underline{\lambda} $ a.e. in
	$U.$ Thus $A\subset \Omega\setminus U,$ then 
	$|A|\le|\Omega\setminus U|<|A|,$ which is a contradiction. Hence, 
	the claim follows.
	
	\medskip
	
	Since $h(x)\ge \overline{\lambda}- \underline{\lambda}$ a.e. in 
	$\Omega,$ $\lambda_0
	-\underline{\lambda}\ge 0$ and $\overline{\lambda}>\lambda_1(s,p),$ 
	we get $\lambda_0 + h(x)\ge 
	\lambda_0 + \overline{\lambda}- \underline{\lambda}>\lambda_1(s,p).$
	
	On the other hand, 
	since $\mu$ is the first eigenvalue of \eqref{eq:auxteofin},
	we have that
	\[
		1\le\K(1-s)\dfrac{|\phi|_{\wspr}^p}{\displaystyle\int_\Omega 
		(\lambda_0+h(x))|\phi(x)|^p\, dx}\quad \forall\phi\in 
		C_0^\infty(\Omega).
	\] 
	Then for any $\phi\in C_0^\infty(\Omega)$
	\[
		(\lambda_0 + \overline{\lambda}- \underline{\lambda})
		\|\phi\|_{\lp}^p\le\int_\Omega 
		(\lambda_0+h(x))|\phi(x)|^p\, dx\le \K(1-s)|\phi|_{\wspr}^p
	\]
	due to our claim. Then
	\[
		\lambda_0 + \overline{\lambda}- \underline{\lambda}
		\le\lambda_1(s,p)<\lambda_0 + 
		\overline{\lambda}- \underline{\lambda},
	\]
	getting a contradiction. Thus the lemma is true.\qed 
	
\end{proof}

Finally, we prove Theorem \ref{teo:teo31}.

\begin{proof}[Theorem \ref{teo:teo31}]
	By Lemma \ref{lema:cotaaut} and Lemma \ref{lema:cota}, 
	$\CC\cap\left([\underline{\lambda},\infty)\times\twsp\right)$ is 
	bounded. On other hand, by Lemma \ref{lem:lema31}, $\CC$ is unbounded. Then
	there exists $(\underline{\lambda},\underline{u})\in\CC,$ due to $\CC$
	is connected. By A2 $\underline{\lambda}<\lambda_1(s,p)$ and 
	Lemma \ref{lem:lema31}, $\underline{u}$ has constant-sign in $\Omega.$
	Therefore $u$ is a non-trivial weak solution of \eqref{eq:D}.
\qed \end{proof}

\subsection*{Acknowledgements}
This work was partially supported by Mathamsud project 
13MATH--03 -- QUESP -- Quasilinear Equations and Singular Problems.
L. M. Del Pezzo was partially supported by PICT2012 0153 from ANPCyT (Argentina) and A. Quaas was partially supported 
by Fondecyt Grant
No. 1151180 Programa Basal, CMM. U. de Chile and Millennium Nucleus
Center for Analysis of PDE NC130017.

\bibliographystyle{amsplain}
\bibliography{Biblio}

\end{document}